\documentclass[11pt, reqno]{amsart}
\usepackage{amsmath, amsthm, amscd, amsfonts, amssymb, graphicx, color, upgreek, esint}
\usepackage[bookmarksnumbered, plainpages]{hyperref}
\usepackage{tikz,float,xcolor,setspace}
\usetikzlibrary{patterns}

\usepackage[left=2.5 cm, right= 2.5 cm, top = 3 cm, bottom = 2.5 cm]{geometry}
\usepackage{parskip}

\DeclareMathOperator*{\argmin}{arg\,min}
\def\R{\mathbb{R}}
\def\eps{\varepsilon}

\def\N{\mathbb{N}}

\def\S{\mathbb{S}}
\def\lt{\left}
\def\rt{\right}

\def\cIa{\cI_\alpha}
\def\cI_2{\cI_{log}}

\def\les{\lesssim}

\def\cI{\mathcal{I}}
\def\cH{\mathcal{H}}
\def\cF{\mathcal{F}}
\def\cC{\mathcal{C}}
\def\Pb{P_{\beta}}

\newtheorem{theorem}{Theorem}[section]
\newtheorem{lemma}[theorem]{Lemma}
\newtheorem{assumption}[theorem]{Assumption}

\newtheorem{corollary}[theorem]{Corollary}
\theoremstyle{definition}
\newtheorem{definition}[theorem]{Definition}

\newtheorem{construction}[theorem]{Construction}

\theoremstyle{remark}
\newtheorem{remark}[theorem]{Remark}
\numberwithin{equation}{section}

\begin{document}
\setcounter{page}{1}

\title[Charged capillarity droplets]{Young's law for a nonlocal isoperimetric model \\ of charged capillarity droplets}

\author{Michael Goldman}
\address{CMAP, CNRS, \'Ecole Polytechnique, Institut Polytechnique de Paris, 91120 Palaiseau,
	France}
\email{michael.goldman@cnrs.fr}

\author{Matteo Novaga}
\address{Department of Mathematics, University of Pisa, 56127 Pisa, Italy}
\email{matteo.novaga@unipi.it}

\author{Adriano Prade}
\address{CMAP, \'Ecole Polytechnique, Institut Polytechnique de Paris, 91120 Palaiseau,
	France}
\email{adriano.prade@polytechnique.edu}

\begin{abstract}
    We study a variational problem modeling equilibrium configurations of charged liquid droplets resting on a surface under a convexity constraint. In the two-dimensional case with Coulomb interactions, we establish the validity of Young's law for the contact angle for small enough charges.
\end{abstract}

\maketitle


\begin{section}{Introduction}
For $n \geq 2$ let $H:=\{x \in \R^n:x_n\geq0\}$ be the upper half-space in $\R^n$ and let $Q>0$. The goal of this paper is to study existence and regularity of minimizers of the geometric variational problem
\begin{equation}\label{minproblem} \min \lt\{ \Pb(E) + Q^2 \cIa(E): E \subset H, \,E \text{ convex body}, |E|=1 \rt\}.  \end{equation}
We say that $E \subset \R^n$ is a convex body if it is convex, compact and with non-empty interior. For a convex body $E \subset H$ and $\beta \in (-1,1)$, we define
\[ \Pb(E):=\cH^{n-1}\lt(\partial E \cap \{x_n>0\}\rt) - \beta \cH^{n-1}\lt(\partial E \cap \{x_n=0\}\rt), \]
where $\cH^{n-1}$ denotes the $(n-1)$-dimensional Hausdorff measure in $\R^n$. For $\alpha \in (0,n)$, we define the $\alpha$-Riesz energy of $E$ as
\begin{equation}\label{def:Ialpha} \cI_{\alpha}(E):=\inf_{\mu(E)=1} \int_{E \times E} \frac{d\mu(x)d\mu(y)}{|x-y|^{n-\alpha}}. \end{equation}
In the limit case $\alpha=n$, the logarithmic Riesz energy of $E$ is defined as
\begin{equation}\label{defIlog}
\cI_n(E):=\inf_{\mu(E)=1} \int_{E \times E} \log \lt( \frac{1}{|x-y|}\rt) d\mu(x)d\mu(y). \end{equation}
Altogether, for $\alpha \in (0,n]$, $\beta \in (-1,1)$ and $Q>0$ we set
\begin{equation}\label{defFabQ}
 \cF_{\alpha, \beta, Q}(E):= \Pb(E) + Q^2 \mathcal{I}_\alpha(E).
 \end{equation}
The functional $\cF_{\alpha, \beta, Q}$ models the equilibrium configurations of a liquid droplet $E$ in the container $H$, resting on its surface $\partial H$, carrying a positive charge $Q$ and in absence of gravity. The Riesz energy $\cIa$ is of nonlocal nature and it accounts for the repulsive forces between the charged particles within the droplet. In the case $n=3$ and $\alpha=2$ it corresponds indeed to the Coulomb interaction energy up to multiplication by a constant, see the introduction of \cite{muratov2016well}. Instead, the capillarity term $\Pb(E)$ describes the local intermolecular forces at the interfaces between the liquid, solid and gas phases constituting the physical system, which give rise to the phenomenon of surface tension. The parameter $\beta$ is called the relative adhesion coefficient and it measures the relative strength of cohesion forces within the liquid and adhesion forces between the liquid and the container.

It is by now well-understood that local minimizers of $\Pb$ satisfy the so-called Young's law \cite{young1805iii}
\begin{equation}\label{Young} \cos \gamma = \beta,\end{equation}
where  $\gamma$ is the contact angle between $\partial E$ and $\partial H$. This compatibility condition still holds for mild perturbations of $\Pb$ including volume constraints.
We refer to \cite{finn1986equilibrium, maggi2012sets} for detailed expositions on capillarity theory and related classical results, see also the survey \cite{valdinoci2025toward} for a quick introduction. The subject has been extensively studied in recent years as well, see for example \cite{carazzato2025quantitative, chodosh2024improved, dephilippis2015regularity, de2025regularity, de2025rigidity, fusco2025isoperimetric, kreutz2024note, pacati2025some, pascale2025existence, pascale2024quantitative} and the references therein.

The question we wish to address is the validity of \eqref{Young} for minimizers of \eqref{minproblem}. This is motivated by applications in electrowetting. We refer to \cite{mugele2005electrowetting, mugele2019electrowetting} for a comprehensive guide on the subject and its applications, see also \cite{fontelos2009variational, scheid2009proof} and the references therein. The first experimental observations date back to Lippmann \cite{lippmann1875relations} in 1875. He observed that when a difference of potentials is applied to the drop, the system acts as a capacitor which leads to a flattening of the droplet with a reduction of the contact angle. He predicted the expected deviation from Young's law \eqref{Young} through what is now called  Lippmann’s law. There is however some controversy in the literature regarding the validity of this law. Indeed, it had been argued by some authors that at the microscopic level Young's law \eqref{Young} should still hold while Lippmann's law should be only apparent at the macroscopic scale. Our aim is to rigorously prove, for the simplified two-dimensional case, that \eqref{Young} indeed holds.

Before stating our main results, let us comment on the seemingly unnatural restriction we made on the class of competitors by assuming that they are convex. This is motivated by the observation from \cite{GolNovRuf13}, in the case where the container is the whole space (which corresponds to $\beta=-1$), that the variational problem is ill-posed as soon as $\alpha>1$ if we do not impose topological restrictions on the class of competitors, see also \cite{muratov2016well} as well as \cite{muratov2018equilibrium,Gol22} for the case $\alpha\le 1$. Several regularization mechanisms have been considered as a remedy, see \cite{DePHirVes19,goldman2024charged} but we follow here \cite{GolNovRuf16} where convexity was imposed. Since the observed equilibrium shapes are convex for small charges $Q$, we believe that our results are still physically relevant in this regime.

Our first result is that as in \cite{GolNovRuf16}, the class of convex sets is rigid enough to guarantee existence of minimizers.
\begin{theorem}\label{theo:existence}
    Let $n \geq 2$ and $\alpha \in (0,n]$. For all $\beta\in(-1,1)$ and $Q>0$, problem \eqref{minproblem} has a minimizer.
\end{theorem}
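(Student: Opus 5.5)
We argue by the direct method in the class of convex bodies, equipped with Hausdorff convergence. Let $(E_k)$ be a minimizing sequence for \eqref{minproblem}; it is not empty, since a half-ball of volume one gives a finite value. By translation invariance in the first $n-1$ coordinates, we may assume each $E_k$ touches $\partial H$: if $E_k\cap\partial H=\emptyset$ we translate it down, which leaves $\cIa$ unchanged and can only decrease $\Pb$, because $-\beta\,\cH^{n-1}(\partial E\cap\{x_n=0\})\le \cH^{n-1}(\text{that portion})$ with $|\beta|<1$. We then center $E_k$ horizontally.

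\textbf{Step 1: uniform perimeter bound.} For a convex body $E\subset H$, the projection of $\partial E\cap\{x_n>0\}$ onto $\{x_n=0\}$ covers $\partial E\cap\{x_n=0\}$. Hence $\cH^{n-1}(\partial E\cap\{x_n=0\})\le \cH^{n-1}(\partial E\cap\{x_n>0\})$, and therefore
\[
\Pb(E)\ge (1-|\beta|)\,\cH^{n-1}(\partial E\cap\{x_n>0\})\ge \tfrac{1-|\beta|}{2}P(E).
\]

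\textbf{Step 2: lower bound on the Riesz term.} For $\alpha<n$ we have $\cIa\ge0$, so $P(E_k)$ is uniformly bounded. For $\alpha=n$ the logarithmic energy can be negative, so we need a lower bound. Using $|x-y|\le \mathrm{diam}(E)$, we get $\cI_n(E)\ge -\log \mathrm{diam}(E)$. For convex bodies of volume one, $\mathrm{diam}(E)\lesssim P(E)^{\frac{n-1}{n-2}}$ in dimension $n\ge3$, and $\mathrm{diam}(E)\le P(E)/2$ for $n=2$. Thus $\Pb(E)+Q^2\cI_n(E)\ge cP(E)-C\log P(E)$, which again bounds $P(E_k)$.

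\textbf{Step 3: compactness.} A convex body with perimeter bounded and volume one has bounded diameter, since $|E|\gtrsim \mathrm{diam}(E)\cdot(\text{inradius-type width})^{n-1}$ and $P(E)\gtrsim \mathrm{diam}\cdot \text{width}^{n-2}$, which gives an upper bound on the diameter. In $n=2$ this is direct: $P\ge 2\,\mathrm{diam}$. Blaschke's selection theorem then gives a subsequence $E_k\to E$ in Hausdorff distance with $E\subset H$ convex and compact. Volume is continuous under this convergence, so $|E|=1$ and $E$ has nonempty interior.

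\textbf{Step 4: lower semicontinuity.} This is the main obstacle, and it has two parts.
\begin{itemize}
\item[(a)] For $\cIa$, take the optimal measures $\mu_k$ on $E_k$ (or almost optimal ones). They are supported in a fixed compact set, so up to a subsequence $\mu_k\rightharpoonup\mu$ with $\mu(E)=1$. The kernel is lower semicontinuous and bounded below on bounded sets, so the standard Portmanteau argument gives $\cIa(E)\le\liminf\cIa(E_k)$.
\item[(b)] For $\Pb$, which is the delicate part, we use that perimeter is continuous under Hausdorff convergence of convex bodies: $P(E_k)\to P(E)$. We write $\Pb(E)=P(E)-(1+\beta)\cH^{n-1}(\partial E\cap\{x_n=0\})$. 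We then need $\limsup_k \cH^{n-1}(E_k\cap\{x_n=0\})\le \cH^{n-1}(E\cap\{x_n=0\})$, which holds because the flat faces $E_k\cap\{x_n=0\}$ are compact convex sets. Up to a subsequence they converge in Hausdorff distance to a subset of $E\cap\{x_n=0\}$, and the $(n-1)$-dimensional volume is upper semicontinuous under Hausdorff convergence of compact convex sets. Since $1+\beta>0$, this gives $\Pb(E)\le\liminf\Pb(E_k)$.
\end{itemize}

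Combining (a) and (b), $E$ is a minimizer of \eqref{minproblem}.
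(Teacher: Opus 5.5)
Your proof is correct and follows essentially the same route as the paper: direct method, diameter control via $P\lesssim\Pb$ and, for $\alpha=n$, the bound $\cI_n(E)\ge-\log\mathrm{diam}(E)$, then Blaschke selection and lower semicontinuity. The only variation is that you prove lower semicontinuity of $\Pb$ by hand, through $\Pb(E)=P(E)-(1+\beta)\cH^{n-1}(E\cap\partial H)$ and upper semicontinuity of the contact area, instead of citing Maggi's $L^1$ result.

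There are two harmless slips. Your bound $\mathrm{diam}(E)\lesssim P(E)^{\frac{n-1}{n-2}}$ is false for $n\ge4$, since thin cylinders give $\mathrm{diam}\simeq P^{n-1}$; the correct bound is \eqref{diamestimate}. In Step 3, the inequality $|E|\gtrsim\mathrm{diam}\cdot\text{width}^{n-1}$ points the wrong way for bounding the diameter. In both places any polynomial bound such as \eqref{diamestimate} suffices, so nothing breaks.
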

As a direct consequence  of the quantitative isoperimetric inequality for $\Pb$, see \cite{carazzato2025quantitative, kreutz2024note, pascale2024quantitative}, we show $L^1$ and Hausdorff convergence of minimizers of \eqref{minproblem} to isoperimetric sets for $\Pb$ as $Q \rightarrow 0$ in Lemma \ref{hausdorffconv}.

In the rest of the paper we restrict ourselves to the case  $n=\alpha=2$. Let $F \subset H$, we call $\partial F \cap \partial H$ the contact set of $F$ with $H$ or just contact set of $F$. Notice that if $E$ is a minimizer of \eqref{minproblem} then its contact set is a nonempty segment, see Remark \ref{non-empty contact}. We call contact points of $E$ the two endpoints of this segment (which could a priori coincide) and contact angles of $E$ the two angles between $\partial H$ and the tangent lines to $\partial E \cap H$ at the contact points. Notice that these are well-defined by convexity of $E$.
\begin{theorem}\label{thm:Youngintro}
    Let $E$ be a minimizer of \eqref{minproblem} and $\gamma$ be one of its contact angles, then $\partial E$ is $C^{1,1}_{loc}$ regular away from the contact points, $\mathcal{H}^1(\partial E\cap \partial H)>0$ and there exists $\bar Q >0$ such that for $Q \leq \bar Q$ we have $\cos \gamma = \beta$.
\end{theorem}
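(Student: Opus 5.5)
We work throughout with $n=\alpha=2$, so that $\cI_2$ is the logarithmic capacity energy. The main tool is the equilibrium measure $\mu_E$, the minimizer in \eqref{defIlog}. Its potential $u_E(x)=\int\log\frac{1}{|x-y|}d\mu_E(y)$ is constant on $E$, and $u_E(x)-\cI_2(E)$ equals $-g_E(x)$, where $g_E$ is the Green function of $\R^2\setminus E$ with pole at infinity. This gives the first variation of $\cI_2$ under a normal deformation $\partial E\ni x\mapsto x+t\varphi(x)\nu(x)$:
\[
\frac{d}{dt}\cI_2(E_t)\Big|_{t=0}=-\frac{1}{2\pi}\int_{\partial E}|\nabla g_E|^2\varphi\,d\cH^1 .
\]
Here $|\nabla g_E|/(2\pi)$ is the density of $\mu_E$ with respect to $\cH^1$ on $\partial E$.

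\textbf{Regularity away from the contact points.} On a boundary arc of $\partial E\cap\{x_2>0\}$ we perturb the curvature measure. Convexity only allows perturbations that keep the curvature nonnegative, so we argue one-sidedly, as in \cite{GolNovRuf16}. Inward bumps (cutting off a small cap) are always admissible. Outward bumps are admissible on portions where the curvature measure has positive mass, and affine pushes are admissible on flat pieces. Comparing perimeter, volume (restored by a small dilation, which costs $O(\text{area})$) and energy yields an Euler--Lagrange inequality of the form $\kappa\le \lambda+Q^2|\nabla g_E|^2/(2\pi)$ in the sense of measures.

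To conclude $C^{1,1}$ we need $|\nabla g_E|\in L^\infty_{loc}$ on this arc. For a convex domain, at a corner of interior angle $\theta<\pi$ the exterior angle is $2\pi-\theta>\pi$. The conformal map then gives $|\nabla g|\sim r^{\pi/(2\pi-\theta)-1}$, which is bounded, indeed vanishing. Away from corners, convexity gives boundedness by the usual barrier argument with exterior tangent discs, provided we bootstrap in a Lipschitz$\to C^{1,\alpha}\to C^{1,1}$ order. So $\kappa\in L^\infty_{loc}$, which is $C^{1,1}_{loc}$.

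\textbf{Positive contact length.} Suppose the contact set were a single point $p$. Then $E$ touches $\partial H$ at $p$ with some tangent cone, and since $\beta>-1$ a competitor does better. We truncate $E$ by the line $\{x_2=\delta\}$ and translate down by $\delta$. This creates a contact segment of length $\ell(\delta)$ and replaces a boundary portion of length $\approx\ell+o(\ell)$ (or more, at a corner) by a contact set weighted by $-\beta$. The perimeter gain is at least $(1+\beta)\ell/2$. The volume loss is $O(\ell\delta)=o(\ell)$, and since $\cI_2$ is monotone decreasing under inclusion, the energy change is $O(Q^2\cdot\text{area removed})$ after rescaling for the volume. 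Hence the gain dominates and $E$ is not minimal.

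\textbf{Young's law.} Let $p$ be a contact point with angle $\gamma$. We perform a rotation of the boundary near $p$: we replace $E$ near $p$ by the convex hull with a small triangle, or cut off a small triangle, changing the angle to $\gamma\pm\eta$ over a length $\varepsilon$. Rotation toward $\cos\gamma=\beta$ is always admissible in the convex class, in one of the two directions. The perimeter changes by $\varepsilon(\pm(\cos\gamma-\beta))\eta+o(\varepsilon\eta)$ at first order, while the area change is $O(\varepsilon^2\eta)$.

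The main obstacle is controlling the energy change. It is $\int|\nabla g_E|^2\varphi$ over a region of size $\varepsilon$ near $p$. At $p$, the interior angle is $\gamma\le\pi$, so the exterior angle relative to the whole plane is $2\pi-\gamma\ge\pi$. Hence $|\nabla g_E|^2\lesssim r^{2\pi/(2\pi-\gamma)-2}$, with a constant controlled uniformly in $Q\le\bar Q$ thanks to the Hausdorff convergence of Lemma \ref{hausdorffconv} to a spherical cap with angle $\arccos\beta$. Since $\varphi\lesssim\eta r$, the energy change is $\lesssim Q^2\eta\varepsilon^{2\pi/(2\pi-\gamma)+1-1}$, and the exponent of $\varepsilon$ is greater than $1$. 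Hence it is $o(\varepsilon\eta)$. The area correction is $O(\varepsilon^2\eta)$, also $o(\varepsilon\eta)$. Letting $\varepsilon\to0$ forces $\cos\gamma=\beta$.

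The smallness of $Q$ is used to keep $E$ close to the cap. This guarantees that both contact points are genuine corners with $\gamma$ bounded away from $0$ and $\pi$, and that the local flatness required for the conformal estimate holds. The uniform Green-function bound at the corner is the step I expect to require the most care.
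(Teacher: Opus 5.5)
\textbf{Gap in the Young's law step.} The step that fails is your claim that rotating the boundary near $p$ toward $\cos\gamma=\beta$ ``is always admissible in the convex class'' with area change $O(\varepsilon^2\eta)$.

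Cutting inward by a line is always admissible, and it yields $\cos\gamma\le\beta$ (this is Theorem~\ref{young1}). The outward rotation, which is the one you need for $\cos\gamma\ge\beta$, is not always admissible. If $F\supset E$ is convex and its contact set extends a distance $r$ beyond $p$, the new side must be a supporting line of $E$ at the junction point. So a new contact angle $\gamma-\eta$, with the modification confined to an $\varepsilon$-neighbourhood of $p$, requires $\partial E$ itself to turn by at least $\eta$ within distance $\varepsilon$ of $p$. If instead $\partial E\cap\{x_2>0\}$ contains a segment $[p,q_0]$ of length $L$, then $F$ contains the triangle with vertices $p$, $q_0$ and the new contact point. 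This triangle lies outside $E$ and has area $\frac12 rL\sin\gamma$. That cost is of the same order $r$ as the first-order perimeter gain $r(\beta-\cos\gamma)$, so nothing can be concluded.

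This is a genuine obstruction, not a technicality. Your argument uses only local competitors, an area cost and the monotonicity of $\cI_2$, so it would apply verbatim to $\Lambda$-minimizers of $\Pb$. Lemma~\ref{examplenotyoung} exhibits such a $\Lambda$-minimizer violating $\cos\gamma\ge\beta$: a rectangle capped by a half-disc, with contact angle $\pi/2$, valid for every $\beta\in(0,1)$.

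The missing ingredient is that for small $Q$ the set $\partial E\cap\{x_2>0\}$ contains no segment, in particular none ending at a contact point (Theorem~\ref{strictconv}). This is where the smallness of $Q$ is really used.
\begin{itemize}
\item Test the volume-constrained minimality with the rescaling $\lambda F$ of an outer competitor $F\supset E$. Using $\cI_2(F)\le\cI_2(E)$ and $\cI_2(\lambda F)=\cI_2(F)-\log\lambda$, one gets $\delta\text{Vol}\,(\Pb(E)-Q^2)\lesssim\delta P$ (Lemma~\ref{criticalcharge}).
\item Filling in below a segment of length $2L$ with two tangent lines meeting at depth $h$ gives $\delta\text{Vol}\ge Lh$ and $\delta P\lesssim h^2/L$, a contradiction as $h\to0$.
\end{itemize}
Only after this can the outward tangent-line competitor of Theorem~\ref{young2} be built. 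The role you assign to small $Q$ cannot replace this step: Hausdorff closeness to $B^\beta$ (Lemma~\ref{hausdorffconv}) controls neither the contact angle nor the presence of a short segment ending at $p$.

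\textbf{Further points.} In the regularity step your corner asymptotics has the wrong sign. For $\theta<\pi$ one has $\pi/(2\pi-\theta)<1$, so $|\nabla g_E|\sim r^{\pi/(2\pi-\theta)-1}$ blows up at a convex corner; you use exactly this blow-up in the Young's law step. Hence $\kappa\in L^\infty_{loc}$ cannot be reached before corners in $\partial E\cap\{x_2>0\}$ are excluded. The paper excludes them with the cut-off competitors $E_{x,\eps}$, verifying Assumption~\ref{assloclip} through the $L^p$ bound, $p>2$, on the equilibrium density (Theorem~\ref{convexint}). Your measure-valued inequality could exclude corners only through the integrability of $|\nabla g_E|^2$ there, which is exactly what that bound encodes. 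The bootstrap to $C^{1,1}$ can start only once $C^{1,\eta}$ is known.

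Similarly, in the positive-length step the energy increase from removing the cap is not $O(Q^2\cdot\text{area})$, since the density may be unbounded at $p$. It is nevertheless $o(\ell)$: the removed cap lies in $E\setminus E_{p,t}$ for some $t\lesssim\ell$, so Assumption~\ref{assloclip} applies. With this correction your truncate-and-translate competitor is a valid and more direct alternative to the paper's route. The paper instead first shows that $\partial E$ is $C^1$ at an isolated contact point, so $\gamma=\pi$, and then invokes Theorem~\ref{young2}.

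Finally, the energy is not the main difficulty in the Young's law step. For outward competitors $\cI_2$ only decreases, up to the volume correction, and for inward ones Assumption~\ref{assloclip} already suffices.
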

Let us discuss the main steps of the proof of Theorem \ref{thm:Youngintro}. As in  \cite{esposito2011remark, goldman2012volume, GolNovRuf16, Gol22, lamboley2023regularity}, we first relax the volume constraint into a penalization, see Lemma \ref{volconstraint}. As a consequence, every minimizer $E$ of \eqref{minproblem} satisfies a $\Lambda$-minimality property: there exists $\Lambda>0$ such that for every convex body $F \subset H$ we have
\begin{equation}\label{lambdaminintro}
    \Pb(E) + Q^2\cI_2(E) \leq \Pb(F) + Q^2 \cI_2(F) + \Lambda |E \Delta F|.
\end{equation} 
Instead of focusing on sets satisfying \eqref{lambdaminintro}, we consider a more general notion of   $\Lambda$-minimizers of capillarity shape functionals $J_\beta$ under convexity constraint in $H$, see Definition \ref{lambdamin}. The conditions we impose on the admissible  perturbations of $\Pb$ (see Assumption \ref{assloclip}) are tailored for the variations we use on the one hand and to include the logarithmic Riesz energy on the other hand. Observe however that more standard $\Lambda-$minimality properties also fit in this framework, see Remark \ref{lamboley prunier}. In Theorem \ref{C1,etareg} we first show (interior) $C^{1,\eta}_{loc}$ regularity away from the contact points for some $\eta \in (0,1]$. This follows from \cite{GolNovRuf16}.

We focus next on the validity of Young's law for $\Lambda$-minimizers of capillarity shape functionals $J_\beta$. As above, we denote by $\gamma \in (0, \pi]$ the contact angle of a $\Lambda$-minimizer $E$ with $\partial H$. We prove in  Theorem \ref{young1} that we always have the inequality
\[\cos \gamma \leq \beta.\]
The proof, which is close in spirit to \cite{GolNovRuf16} consists in a cutting procedure if the angle is too small. Notice that in order to make this construction we need to show first that $\mathcal{H}^1(\partial E\cap \partial H)>0$, which we prove in Corollary \ref{cor:poslenght}. The other direction is arguably the most interesting contribution of our work. Indeed, in Lemma \ref{examplenotyoung} we prove that in general $\Lambda$-minimizers of $\Pb$ do not satisfy $\cos \gamma \geq \beta$. We show however that it holds provided $\partial E \cap \{x_2 >0\}$ does not locally coincide with its tangent cone, see Theorem \ref{young2}. Indeed, thanks to this hypothesis, we may construct exterior perturbations which both preserve convexity and reduce the contact angle.
\begin{remark}\label{highern}
It is tempting  to try to extend the results from  \cite{lamboley2023regularity},  to the setting of capillarity shape functionals. There are however some severe difficulties. The first one is apparent from Lemma \ref{examplenotyoung} which shows that in general Young's law cannot hold. The second one is that as opposed to \cite{lamboley2023regularity}, which is about estimates, we are interested here in an equation which would thus require to be much more precise. Finally, even for the interior problem, it is unclear whether the results of \cite{lamboley2023regularity} can cover the case of the capacity, see however \cite{prunier2024fuglede}. \end{remark}

In order to apply these general results for $\Lambda-$minimizers of $J_\beta$ to the case $J_\beta=\cF_{2, \beta, Q}$, we prove in Theorem \ref{strictconv} that for small charges,  the boundary of a minimizer for \eqref{minproblem} contains no segments in the interior of $H$.

We conclude the introduction with an open problem. From Theorem \ref{strictconv} it is natural to expect that for small charges, minimizers of \eqref{minproblem} should be uniformly convex in $\{x_2>0\}$. This would allow to write the  Euler-Lagrange equation,
\begin{equation}\label{eulerlagintro} \kappa_E - \frac{Q^2}{2\pi} |\nabla u_E|^2 = \lambda \qquad \text{on } \partial E \cap H. \end{equation}
Here $\kappa_E$ and $u_E$ denote respectively the curvature and the logarithmic potential of a minimizer $E$, see Section \ref{potentialprel}. Using a simple bootstrap argument we would then obtain smoothness of $\partial E$ away from the contact points. In Lemma \ref{positivecurvature}, we obtain a first step in this direction.

The paper is organized as follows. In Section \ref{prem} we set the notation and present some preliminary results on the Riesz and logarithmic energies, harmonic measure in dimension $2$ and capillarity theory. In Section \ref{sec4} we prove existence of minimizers for \eqref{minproblem}. In Section \ref{sec5} we introduce the notion of $\Lambda$-minimizers of capillarity shape functionals and study their properties. Finally, in Section \ref{sec3} we show Young's law and regularity of minimizers of \eqref{minproblem}.

\end{section}

\begin{section}{Preliminaries}\label{prem}

We work in $\R^n$ with $n \geq 2$, we write $x = (x_1, ... , x_n)$ for $x \in \R^n$ and $\{e_i\}_{i=1}^n$ to denote the standard orthonormal basis. If $E\subset \R^n$ then we denote by $\chi_E$ its indicator function and by $E^\circ$ its topological interior. If $E$ is measurable then $|E|$ denotes its Lebesgue measure and $\cH^k(E)$ denotes its $k$-dimensional Hausdorff measure for $k\in [0,n]$. For $E \subset\R^n$ convex body and $\Omega \subset \R^n$ measurable we write $P(E,\Omega):=\cH^{n-1}(\partial E \cap \Omega)$ to denote the perimeter of $E$ in $\Omega$. If $\Omega=\R^n$ then we drop the dependence on $\Omega$ and we just write $P(E)$. We denote the diameter of a convex body $E \subset \R^n$ by $\text{diam}(E)$. For $x \in \partial E$, we denote the tangent cone of $E$ at $x$ by
\[ C_{x}:=\bigcup_{\lambda >0} \lambda\lt(E - x\rt).\]
We denote by $\nu_E$ the outward normal to $E$.
For $x\in \R^n$, $r>0$ we set $B_r(x):=\{y\in \R^n: |x-y|<r\}$, we drop the dependence on $x$ if $x=0$ and we also write $\omega_n:=|B_1|$. If $x,y \in \R^n$, we write $[x,y]$ for the segment joining them.
We often use constants $C,c>0$ for estimates. Their dependence is given in the corresponding statement or proof and their value may change form line to line. We write $A \lesssim B$ if there exists $C>0$ depending on $n, \alpha,\beta$ such that $A \leq C\,B$ and $A \simeq B$ if $A \lesssim B$ and $B \lesssim A$. We also denote $A \ll B$ to indicate that there is a  constant $\eps>0$ depending on $n, \alpha,\beta$ such that $A \leq \eps B$. Finally, for $k \in \N \cup \{0\}$ and $\eta\in(0,1]$ then $C^{k,\eta}(\Omega)$ denotes the H\"older space of $k$ times differentiable functions with $k^{th}$ derivative $\eta$-H\"older.


\begin{subsection}{Riesz energies, potentials and equilibrium measures}\label{potentialprel} $\,$

We recall the definition and various properties of the Riesz energy $\cIa$ and the logarithmic energy $\cI_n$, see \cite{GolNovRuf13, GolNovRuf16}, the notes \cite{prats2023notes} and the monographs \cite{garnett2005harmonic, doohovskoy_foundations_1972, saff2013logarithmic} for more details on this subject. For $\alpha\in(0,n]$ we set
\[ k_\alpha(z)=\begin{cases}
    |z|^{-(n-\alpha)} &\textrm{if } \alpha \in (0,n)\\
    -\log |z| &\textrm{if } \alpha=n
    \end{cases} \]
For a positive measure $\mu$ we set
\[ I_\alpha(\mu)= \int_{\R^n\times\R^n} k(x-y)d\mu(x)d\mu(y) \]
so that for a compact set $E\subset \R^n$, recall \eqref{def:Ialpha} and \eqref{defIlog},
\[ \cIa(E)=\min_{\mu(E)=1} I_\alpha(\mu) \]
By definition of $\cIa$ we have the monotonicity property, $ E \subset F$ implies $\cIa(E) \geq \cIa(F)$.
In addition, from a change of variable we get that for every $\lambda>0$
\begin{equation}\label{scaling} \cIa(\lambda E) = \lambda^{-(n-\alpha)} \cIa(E) \qquad \textrm{if } \alpha \in (0,n) \qquad \textrm{and} \qquad  \cI_n(\lambda E) = \cI_n(E) - \log(\lambda).
\end{equation}
From \cite[p. 131]{doohovskoy_foundations_1972} we know that if $E \subset \R^n$ is compact with $\cIa(E) < +\infty$ then the infimum in the definition of $\cIa(E)$ is achieved by a unique measure $\mu_E$. We have that $\text{spt}(\mu_E)= E$ for $\alpha \in (0,2)$, whereas spt$(\mu_E) \subset \partial E$ for $n \geq 3$, $\alpha \in [2,n)$ and for $n=\alpha=2$. We then define the potential
\[ u_E:= k_\alpha \ast \mu_E,\]
so that in particular $\cIa(E)=\int_{E} u_E d\mu_E$. Let us notice that $\cI_{n}(E)$ may be negative, but $\cI_n(E)>0$ if diam$(E)<1$.

We extend now the semicontinuity and continuity properties of $\cIa$  with respect to Hausdorff convergence of compact sets from \cite[Lemma 3.4]{goldman2024charged} and \cite[Theorem 4.2]{GolNovRuf13} to the case $\alpha=n$. Let us point out that the argument from \cite[Theorem 4.2]{GolNovRuf13} does not extend directly to this case and that the proof we provide here is somewhat simpler. We say that $E \subset \R^n$ has interior density bounds if there exist $C>0$ and $r_0>0$ such that for all $x \in E$ we have
\[ |E \cap B_r(x)| \geq Cr^n \qquad \text{for all } 0<r \leq r_0.\]
\begin{lemma}\label{contIa}
    Let $\{E_k\}_{k\in \N} \subset \R^n$ be sequence of compact sets converging in the Hausdorff topology to a compact set $E$, then for all $\alpha \in (0,n]$ we have $\cIa(E) \leq \liminf_{k \rightarrow +\infty} \cIa(E_k)$. If in addition $E$ has interior density bounds then $\cIa(E) = \lim_{k \rightarrow +\infty} \cIa(E_k)$.
\end{lemma}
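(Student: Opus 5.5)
For the lower semicontinuity I will work directly with the equilibrium measures. Assume $\liminf_k \cIa(E_k)<+\infty$, otherwise there is nothing to prove, and pass to a subsequence realizing the liminf. Since $E_k\to E$ in the Hausdorff topology, all the $E_k$ lie in a fixed compact set $K$. So the probability measures $\mu_k:=\mu_{E_k}$ are tight, and along a further subsequence $\mu_k\rightharpoonup\mu$ weakly. Hausdorff convergence gives $\mathrm{spt}\,\mu\subset E$: any open set at positive distance from $E$ is eventually disjoint from $E_k$. Hence $\mu$ is an admissible probability measure on $E$.

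The kernel $k_\alpha$ is lower semicontinuous with values in $(-\infty,+\infty]$. For $\alpha=n$ it is bounded below on $K-K$ by $-\log\mathrm{diam}(K)$. So $k_\alpha$ is the increasing limit of the bounded continuous truncations $\min(k_\alpha,M)$, and $\mu_k\otimes\mu_k\rightharpoonup\mu\otimes\mu$. Monotone convergence together with the standard Portmanteau argument then gives
\[
I_\alpha(\mu)\le\liminf_k I_\alpha(\mu_k).
\]
This yields $\cIa(E)\le I_\alpha(\mu)\le\liminf_k\cIa(E_k)$. The only point specific to $\alpha=n$ is the lower bound of the kernel on bounded sets, which is why I work on $K$.

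For the continuity statement it remains to prove $\limsup_k\cIa(E_k)\le\cIa(E)$ when $E$ has interior density bounds. We may assume $\cIa(E)<\infty$, which holds automatically under density bounds. Let $\delta_k:=d_H(E_k,E)\to0$. I would first regularize $\mu_E$ at a scale $r\gg\delta_k$ in a way that keeps the energy close. Set
\[
\mu^r:=\mu_E * \frac{\chi_{B_r}}{|B_r|}.
\]
By superharmonicity of $k_\alpha$ when $\alpha\le2$, or more generally by the fact that averaging the kernel over balls gives $k_\alpha*\rho_r*\rho_r\le C k_\alpha$ with convergence as $r\to0$, we get $I_\alpha(\mu^r)\to I_\alpha(\mu_E)$ as $r\to0$. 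This follows by dominated convergence, since $k_\alpha*\rho_r*\rho_r\to k_\alpha$ pointwise away from the diagonal, with domination by $Ck_\alpha$ or $C(1+|\log|\cdot||)$. The difficulty is that $\mu^r$ is supported in the $r$-neighbourhood of $E$, not in $E_k$.

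This is where the density bounds enter, and I expect it to be the main obstacle. Instead of convolving, I will transport $\mu^r$ into $E_k$ without concentrating mass. Set
\[
\nu_k:=\int \frac{\chi_{E_k\cap B_{2r}(x)}}{|E_k\cap B_{2r}(x)|}\,d\mu_E(x).
\]
For $r\ge\delta_k$, each $x\in E$ has a point of $E$ at distance at most $\delta_k$ inside a ball $B_{r}(y)\subset B_{2r}(x)$ with $|E\cap B_r(y)|\ge Cr^n$. Since $E\subset(E_k)_{\delta_k}$ and $\delta_k\ll r$, one also gets $|E_k\cap B_{2r}(x)|\ge cr^n$. I will argue this last bound by using the density of $E$ and the fact that the parallel set of $E_k$ covers $E$; for the intended convex applications one may also argue directly with convexity. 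Then $\nu_k$ is a probability measure on $E_k$ with density at most $C/r^n$ on the $2r$-scale pieces. This gives
\[
I_\alpha(\nu_k)\le \int\!\!\int \big(k_\alpha*\tilde\rho_{2r}*\tilde\rho_{2r}\big)(x-y)\,d\mu_E d\mu_E,
\]
where $\tilde\rho$ are bounded kernels supported in $B_{2r}$ with sup at most $Cr^{-n}$. By the same domination, the right-hand side is at most $I_\alpha(\mu_E)+\omega(r)$ with $\omega(r)\to0$. For $\alpha=n$ the domination holds because averaging $-\log$ over comparable balls changes it by $O(1)$ near the diagonal and by $o(1)$ away from it. Choosing $r=r_k\to0$ with $\delta_k/r_k\to0$ gives $\limsup_k\cIa(E_k)\le\cIa(E)$, and together with the first part this gives equality.
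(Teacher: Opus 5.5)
Your lower semicontinuity argument is correct and standard. For $\alpha=n$ the only extra input is, as you say, that the kernel is bounded below on $K-K$.

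The gap is in the upper bound, at the claim $|E_k\cap B_{2r}(x)|\ge c r^n$ for $x\in E$ and $r\ge\delta_k$. Hausdorff convergence only says that $E_k$ is $\delta_k$-dense around $E$, and the interior density bounds are a property of $E$ alone. Together they give no lower bound at all on the Lebesgue measure of $E_k$, so this inequality does not follow from $E\subset (E_k)_{\delta_k}$. Nor can it be rescued under the stated hypotheses. Take $E=\overline{B_1}$ and let $E_k$ be a finite $1/k$-net of $E$. Then $E_k\to E$ in the Hausdorff topology and $E$ has interior density bounds. But every probability measure on $E_k$ has an atom, so $\cIa(E_k)=+\infty$ for every $k$ and every $\alpha\in(0,n]$, while $\cIa(E)<+\infty$; here your $\nu_k$ is not even defined. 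So the equality part needs a hypothesis on the sequence, not only on the limit.

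A sufficient hypothesis is a uniform lower density bound $|E_k\cap B_\rho(y)|\ge c\rho^n$ for $y\in E_k$ and $\rho\le r_0$. It holds for convex bodies converging to a convex body with non-empty interior, since the $E_k$ eventually contain a fixed ball. This is the only setting in which the paper uses the equality part. Under it your construction goes through: for $x\in E$ pick $y_k\in E_k$ with $|x-y_k|\le\delta_k\le r$, so that $B_r(y_k)\subset B_{2r}(x)$. Your $\nu_k$ is then exactly the paper's averaged measure $f_\eps$, built on $E_k$ instead of $E$. The paper only proves, for $\alpha=n$, that $\mathcal{I}_n(E)$ is approximated by energies of $L^\infty$ densities on $E$ itself. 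It defers the passage to $E_k$ to the $\alpha<n$ references, and that passage is necessarily where information on $E_k$ must enter. You instead do both steps at once.

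Two technical points also need fixing.
\begin{itemize}
\item Your $\tilde\rho$ depend on $x$ through $E_k\cap B_{2r}(x)$, so the display is not a convolution. It should be written as a double average $\xi_k(x,y)$, as in the paper.
\item Comparing averages over $E_k\cap B_{2r}(x)$ with averages over full balls costs the factor $|B_{2r}|/|E_k\cap B_{2r}(x)|$, which is only allowed for a nonnegative kernel. For $\alpha=n$ you should first dilate so that $-\log$ is positive at all relevant distances, as the paper does. The domination near the diagonal is then $C\log(1/|x-y|)$, obtained via Riesz rearrangement, rather than $-\log|x-y|+O(1)$. This is still $\mu_E\otimes\mu_E$-integrable, so dominated convergence applies.
\end{itemize}

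For convex bodies there is also a one-line alternative. If $B_\rho(x_0)\subset E$, then $E_k\supset x_0+(1-\eta)(E-x_0)$ as soon as $\delta_k<\eta\rho$. Monotonicity together with \eqref{scaling} then gives $\limsup_k\cIa(E_k)\le\cIa(E)$ after letting $\eta\to0$.
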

\begin{proof}
    The case $\alpha<n$ is treated in \cite[Lemma 3.4]{goldman2024charged} and \cite[Theorem 4.2]{GolNovRuf13} so we only consider the case $\alpha=n$. The proof is analogous to the case $\alpha<n$ provided we can prove the following claim. Let $E \subset \R^n$ be a compact set with interior density bounds, then we have
    \[ \cI_n(E)= \inf \lt \{ I_{n}(\mu) : \mu=f\,dx ,\, f \in L^{\infty}(E), \, \int_E f \,dx=1\rt \}. \]
    By dilation of $E$ we may assume that $E \subset B_\delta$ for some $\delta \ll 1$, so that for every $x,y \in E$ we have $|x-y|\ll 1$. Let $\mu=\mu_E$ be the equilibrium probability measure for $\cI_n(E)$ and $u=u_E$ be its associated potential function. For $\eps > 0$ we define the measure $\mu_\eps:=f_\eps\,\cH^n \llcorner  E$ with
    \[ f_\eps(z):= \int_{E\cap B_\eps(z)} \frac{d\mu(y)}{|E \cap B_\eps(y)|}. \]
    By definition spt$(\mu_\eps) \subset E$, moreover by Fubini theorem and the fact that for all $y,z \in \R^n$ there holds $\chi_{B_\eps(y)}(z) = \chi_{B_\eps(z)}(y)$ we have
    \[ \begin{split} \mu_\eps(E) & = \int_E f_\eps(z)dz= \int_E \int_E \frac{\chi_{B_\eps(z)}(y)}{|B_{\eps}(y)\cap E|}d\mu(y)\,dz = \int_E \int_E \frac{\chi_{B_\eps(y)}(z)}{|B_{\eps}(y)\cap E|}\,dz\, d\mu(y) \\ &  = \int_E d\mu(y) = 1. \end{split}  \]
    We also have $f_\eps \in L^{\infty}(E)$ with $\|f_\eps \|_{L^{\infty}(E)} \lesssim \eps^{-n}$ thanks to the interior density bounds, indeed if $\eps$ is small enough
    \[ f_{\eps}(x) \leq \lt( \min_{y \in E} \lt|E \cap B_\eps(y)\rt|\rt)^{-1} \leq C \eps^{-n}.\]
    We denote by $u_\eps=k_n\ast \mu_\eps$ the logarithmic potential function of $\mu_\eps$. The goal is to show that $I_{n}(\mu_\eps) \rightarrow I_{n}(\mu)$ as $\eps \rightarrow 0$, that is
    \begin{equation}\label{weakstrong}  \lim_{\eps\to 0} \int_{E} u_\eps d\mu_\eps= \int_E u d\mu. \end{equation}
    We begin with a preliminary observation. Using Fubini as above, we see that for every measurable function $\varphi: \R^n \rightarrow \R$ there holds
    \begin{equation}\label{averageint}
         \int_E \varphi(z)\mu_\eps(z)\,dz = \int_E \lt( \frac{1}{|E \cap B_\eps(y)|} \int_{E \cap B_\eps(y)} \varphi(z)dz \rt) d\mu(y).
    \end{equation}
    Using this a first time we may write
    \[ \int_{E} u_\eps d\mu_\eps=\int_E v_\eps d\mu \]
    where
    \[ v_\eps(x):= \frac{1}{|E\cap B_\eps(x)|}\int_{E\cap B_\eps(x)} u_\eps(y) dy. \]
    Our aim is to show that
    \begin{equation}\label{upperrough}
     v_\eps\les u
    \end{equation}
    and that for $\mu$-a.e. $x \in \R^2$,
    \begin{equation}\label{pointwiseconv}
         \lim_{\eps\to 0} v_\eps(x)=u(x).
    \end{equation}
    Using dominated convergence this would conclude the proof of \eqref{weakstrong}. To prove these two claims we write that by definition
    \[ u_\eps(y)=\int_E \log\lt(\frac{1}{|y-z|}\rt) d\mu_\eps(z) \]
    so that applying \eqref{averageint} once more we get
    \[ u_\eps(y)= \int_{E} \lt(\frac{1}{|E\cap B_\eps(w)|}\int_{E\cap B_\eps(w)} \log\lt(\frac{1}{|y-z|}\rt) dz\rt) d\mu(w). \]
    Using Fubini this yields    
    \[ v_\eps(x)=\int_{E} \xi_\eps(w,x)\,d\mu(w), \]
    where
    \[ \xi_\eps(w,x)= \frac{1}{|E\cap B_\eps(w)||E\cap B_\eps(x)|}\int_{(E\cap B_\eps(x))\times(E\cap B_\eps(w))} \log\lt(\frac{1}{|y-z|}\rt) dz dy. \]
    We claim that for $w\neq x$
    \begin{equation}\label{secondclaimrough}
        \xi_\eps(w,x)\les \log\lt(\frac{1}{|w-x|}\rt)
    \end{equation}
    and that there holds
    \begin{equation}\label{secondclaimpointwise}
    \lim_{\eps\to 0 } \xi_\eps(w,x)=\log\lt(\frac{1}{|w-x|}\rt).
    \end{equation}
    Indeed, \eqref{secondclaimrough} implies
    \[ v_\eps(x)\les \int_E \log\lt(\frac{1}{|w-x|}\rt) d\mu(w)=u(x) \]
    and thus \eqref{upperrough}, whereas combining \eqref{secondclaimrough} and \eqref{secondclaimpointwise} yields by dominated convergence
    \[ \lim_{\eps\to 0} v_\eps(x)=\lim_{\eps\to 0} \int_{E} \xi_\eps(w,x) d\mu(w)=\int_{E} \log\lt(\frac{1}{|w-x|}\rt) d\mu(w)=u(x) \]
    at all points where $u(x)<\infty$ (which by definition holds $\mu$ a.e.). This in turn concludes the proof of \eqref{pointwiseconv}. We first point out that \eqref{secondclaimpointwise} is immediate if $w\neq x$ by continuity of log on $\R^+$ so we only need to show \eqref{secondclaimrough}. Thanks to the interior density bounds, we have
    \[ \xi_\eps(w,x)\les \eps^{-2n} \int_{ B_\eps(x)\times B_\eps(w)} \log\lt(\frac{1}{|y-z|}\rt) dz dy. \]
    By translation invariance of the right-hand side, we may assume that $w=0$. We are  thus left with the proof of
    \[ \eps^{-2n} \int_{ B_\eps(x)\times B_\eps} \log\lt(\frac{1}{|y-z|}\rt) dz dy\les \log\lt(\frac{1}{|x|}\rt). \]
    If $|x|\gg \eps$ this estimate is obvious, hence it is enough to prove that for $|x|\les \eps$,
    \[ \eps^{-2n} \int_{ B_\eps(x)\times B_\eps} \log\lt(\frac{1}{|y-z|}\rt) dz dy\les \log\lt(\frac{1}{\eps}\rt). \]
    By Riesz rearrangement inequality \cite[Theorem 3.7]{lieb2001analysis} the left-hand side is maximal for $x=0$ and by scaling we have
    \[ \eps^{-2n} \int_{ B_\eps\times B_\eps} \log\lt(\frac{1}{|y-z|}\rt) dz dy=|B_1|^2 \log\lt(\frac{1}{\eps}\rt)+ \int_{ B_1\times B_1} \log\lt(\frac{1}{|y-z|}\rt) dz dy\les \log\lt(\frac{1}{\eps}\rt). \]
    This concludes the proof. \end{proof}
\end{subsection}

\begin{subsection}{Harmonic and equilibrium measure in dimension 2}\label{harmonic measure}$\,$

We now study more in details the logarithmic equilibrium measure in dimension $2$,  and connect it with the harmonic measure. Let $E \subset \R^2$ be a compact and simply connected Lipschitz set with non-empty interior. The potential function $u_E$ satisfies
\begin{equation}\label{logequilibrium} \begin{cases}
    -\Delta u_E = 2\pi \mu_E & \text{in } \mathcal{D}'(\R^2) \\ u_E = \cI_2(E) & \text{on } E \\ u_E < \cI_2(E) & \text{in } \R^2 \setminus E,
\end{cases}\end{equation}
see \cite[Chapter 2, Theorem 1.3 and pp. 138,139]{saff2013logarithmic}, \cite[Section 2.4]{doohovskoy_foundations_1972} and \cite[Chapter 3]{garnett2005harmonic}. Assuming that $0 \in E^\circ$, the Green function $G^{\infty}_{E^c}$ with pole at infinity is the solution to
\begin{equation}\label{Greeninfi} \begin{cases}
    - \Delta u =0 & \text{in } E^c \\ u>0 & \text{in } E^c \\ u=0 & \text{on } \partial E \\ \lim_{|z| \rightarrow +\infty} u(z) - \frac{1}{2\pi} \log |z| = \frac{1}{2\pi}\cI_2(E).
\end{cases}\end{equation}
The harmonic measure $\omega^{\infty}_{E^c}$ of $E^c$ with pole at infinity is the unique probability measure on $\partial E$ satisfying
\begin{equation}\label{harminfi} \int_{\partial E} \varphi d\omega^{\infty}_{E^c} = \int_{E^c} G^{\infty}_{E^c} \Delta \varphi \quad \text{for all } \varphi \in C^{\infty}_c(\R^2) \quad \Longleftrightarrow \quad -\Delta G^{\infty}_{E^c} = - \omega^{\infty}_{E^c} \quad \text{in } \mathcal{D}'(\R^2).\end{equation}
See \cite[Theorem 7.32]{prats2023notes} for more details on the construction of $G^{\infty}_{E^c}$ and $\omega^{\infty}_{E^c}$. From now on we drop the dependence of $G^{\infty}$ and $\omega^{\infty}$ on the domain $\Omega$ whenever it is clear from the context. Let $\mu_E$ be the unique minimizing measure for $\cI_2(E)$, so that  spt$(\mu_E)=\partial E$ and $u_E$ is a solution to \eqref{logequilibrium}. If we let
\[ u:= \frac{1}{2\pi} \lt(\cI_2(E) - u_E \rt),\]
we see that it satisfies \eqref{Greeninfi}, hence $u$ is the Green function with pole at infinity of $E^c$. By \eqref{logequilibrium} we have that $-\Delta u = - \mu_E$, so from \eqref{harminfi} we deduce that $\mu_E$ coincides with the harmonic measure of $E^c$ with pole at infinity. We now recall a classical result that will be used in the following.

\begin{lemma}\label{abscontmeas}
    Let $E \subset \R^2$ be compact, simply connected and of class $C^1$, if $u_E \in C^1(\R^2 \setminus E^\circ)$ then $\mu_E$ is absolutely continuous with respect to $\cH^1 \llcorner \partial E$ and writes
\begin{equation}\label{regeqmeas} \mu_E = \frac{|\nabla u_E|}{2\pi} \cH^1 \llcorner \partial E. \end{equation} 
\end{lemma}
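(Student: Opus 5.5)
The plan is to compute the distributional Laplacian of $u_E$ directly by integrating by parts, and read off $\mu_E$ from the first line of \eqref{logequilibrium}. Fix $\varphi\in C^\infty_c(\R^2)$. Since $-\Delta u_E=2\pi\mu_E$ in $\mathcal{D}'(\R^2)$, we have
\[ 2\pi\int_{\partial E}\varphi\,d\mu_E=-\int_{\R^2}u_E\,\Delta\varphi\,dx=-\int_{E}u_E\Delta\varphi\,dx-\int_{E^c}u_E\Delta\varphi\,dx .\]
On $E$ we have $u_E\equiv\cI_2(E)$, constant. Since $E$ is $C^1$, the divergence theorem gives $\int_E\Delta\varphi=\int_{\partial E}\partial_{\nu_E}\varphi\,d\cH^1$. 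On $E^c$, $u_E$ is harmonic because $\mu_E$ is supported on $\partial E$. In addition $u_E\in C^1(\R^2\setminus E^\circ)$, so Green's second identity applies on the bounded domain $E^c\cap B_R$, with $R$ large enough that $\operatorname{spt}\varphi\subset B_R$. The boundary of this domain is the $C^1$ curve $\partial E$ with outward normal $-\nu_E$. Hence
\[ \int_{E^c}u_E\Delta\varphi=\int_{\partial E}\bigl(u_E\,\partial_{-\nu_E}\varphi-\varphi\,\partial_{-\nu_E}u_E\bigr)d\cH^1 .\]
On $\partial E$ we have $u_E=\cI_2(E)$, so the terms containing $\partial_\nu\varphi$ cancel against the interior contribution. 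This leaves
\[2\pi\int\varphi\,d\mu_E=-\int_{\partial E}\varphi\,\partial_{\nu_E}u_E\,d\cH^1,\]
where $\partial_{\nu_E}u_E$ is the exterior one-sided normal derivative.

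To identify $-\partial_{\nu_E}u_E$ with $|\nabla u_E|$, we use that $u_E$ is constant on the $C^1$ curve $\partial E$ and $C^1$ up to it from outside. This forces the tangential derivative to vanish, so $\nabla u_E=(\partial_\nu u_E)\nu_E$ on $\partial E$. By the third line of \eqref{logequilibrium}, $u_E<\cI_2(E)=u_E|_{\partial E}$ in $E^c$, so $u_E$ decreases going outward and $\partial_{\nu_E}u_E\le 0$. Therefore $-\partial_{\nu_E}u_E=|\nabla u_E|$. Since $\varphi$ is arbitrary, \eqref{regeqmeas} follows, and in particular $\mu_E\ll\cH^1\llcorner\partial E$ with a continuous density.

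The main obstacle is justifying Green's identity up to a boundary that is merely $C^1$, with $u_E$ only $C^1$ up to the boundary and no second derivatives there. I would handle this by approximating $E^c\cap B_R$ from inside by smooth domains $\Omega_k$, for instance via a $C^1$ graph parametrization of $\partial E$ shifted outward along a smooth transversal field. Green's identity holds on each $\Omega_k$ because $u_E$ is smooth in the open set $E^c$. One then passes to the limit using the uniform continuity of $u_E$ and $\nabla u_E$ on $\R^2\setminus E^\circ$ near $\partial E$, together with the convergence of the boundary measures and normals, which holds by the $C^1$ regularity of $\partial E$.
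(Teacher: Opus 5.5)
Your proposal is correct and follows essentially the route the paper intends. The paper writes out no proof; it defers to adapting \cite[Proposition 2.22]{GolNovRuf13}, which likewise tests $-\Delta u_E=2\pi\mu_E$ against $\varphi\in C^\infty_c(\R^2)$, uses that $u_E$ is constant on $E$ and harmonic in $E^c$ to move everything onto $\partial E$ by the divergence theorem, and identifies $-\partial_{\nu_E}u_E$ with $|\nabla u_E|$ from the vanishing tangential derivative and the maximality of $u_E$ on $E$. Your Green's-second-identity bookkeeping (cancelling the $\partial_{\nu_E}\varphi$ terms) is equivalent to integrating $\nabla u_E\cdot\nabla\varphi$ over $E^c$ by parts once, and your approximation remark adequately handles the merely $C^1$ boundary.
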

The proof follows by adapting the argument in \cite[Proposition 2.22]{GolNovRuf13} to dimension $n=2$. Still, if $E$ is a Lipschitz domain then $\omega^{\infty}= \mu_E$ is absolutely continuous with respect to $\cH^1 \llcorner \partial E$ and there exists $p>1$ such that the density of $\omega^{\infty}$ is in $L^p(\partial E)$, see \cite[Chapter 7, Theorem 4.2]{garnett2005harmonic}. For convex bodies it is possible to upgrade the previous result to $p>2$ as stated in the following result, which sums up \cite[Lemma 3.5 and Theorem 3.1]{GolNovRuf16}. We recall that given a convex body $E \subset \R^2$ with $x \in \partial E$, the angle of $E$ at $x$ is the angle $\gamma \in (0, \pi]$ spanned by the tangent cone $\bigcup_{\lambda > 0} \lambda \lt( E - x\rt)$.

\begin{theorem}\label{convexint}
    Let $E \subset \R^2$ be a convex body with minimal angle $\gamma \in (0, \pi]$, $\{E_k\}_{k\in \N}$ be a sequence of convex bodies converging to $E$ in the Hausdorff topology and $\mu_E$, $\{\mu_{E_k}\}_{k\in \N}$ be the associated equilibrium measures with densities $f_E$, $\{f_{E_k}\}_{k\in \N}$. Then for all $1 \leq p < (2\pi - \gamma)/(\pi -\gamma)$ there exists $C_{p,E}$ such that for $k$ large enough depending on $p$ we have
    \[ f_{E_k} \in L^{p}(\partial E_k) \qquad \text{with} \qquad \|f_{E_k}\| \in L^p(\partial E_k) \leq C_{p,E}.\]
    In particular, the equilibrium measure of any convex body is in $L^p(\partial E)$ for some $p>2$.
\end{theorem}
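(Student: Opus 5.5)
The strategy is to reduce the statement to a local estimate near corners, obtained by comparing $\mu_{E_k}$ with harmonic measure of an explicit sector, following \cite{GolNovRuf16}.

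\textbf{Setup.} Write $\mu_{E_k}$ as the harmonic measure of $E_k^c$ with pole at infinity, as explained in Section \ref{harmonic measure}. By translation and dilation, using \eqref{scaling} and the fact that equilibrium measures transform naturally under similarities, we may assume $0\in E^\circ$, and hence $B_{r_0}\subset E_k\subset B_{R_0}$ for all large $k$. Since $E_k$ is Lipschitz with uniform constants, $\mu_{E_k}=f_{E_k}\,\cH^1\llcorner\partial E_k$. We pass to polar coordinates around $0$: $\partial E_k$ is the graph of a radial function $\rho_k$ over $\S^1$, and $\rho_k\to\rho$ uniformly. Because the $\rho_k$ are uniformly Lipschitz, it suffices to bound $f_{E_k}$ in $L^p$ with respect to arclength pushed to angles.

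\textbf{Local comparison with sectors.} Fix $x\in\partial E_k$ and a scale $r\le r_1$. The set $E_k^c\cap B_{2r}(x)$ is contained in the complement of a convex cone with vertex near $x$. By convexity of the limit $E$ and Hausdorff convergence, this cone can be taken with opening at least $\gamma-\delta$, uniformly in $k$ large and in $x$. So locally $E_k^c$ lies inside an exterior sector of opening at most $2\pi-\gamma+\delta$.

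Let $G_k$ denote the Green function with pole at infinity. The standard boundary Harnack/Carleman estimate then gives
\[ \mu_{E_k}(B_s(y))\lesssim \mu_{E_k}(B_r(x))\,(s/r)^{\pi/(2\pi-\gamma+\delta)} \]
for $y\in\partial E_k\cap B_r(x)$ and $s<r$. To see this, compare $G_k$ on the domain $B_{2r}(x)\setminus E_k$ with the explicit harmonic function $r^{-a}\,\mathrm{dist}^{a}\sin(a\theta)$ on the sector, where $a=\pi/(2\pi-\gamma+\delta)$, and apply the maximum principle. The exponent can only improve at points where the local angle is larger than $\gamma$.

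\textbf{From decay to $L^p$.} The density of $\mu_{E_k}$ behaves near a corner like $d^{\,a-1}$, where $d$ is the distance to the corner. Since convex sets have at most countably many corners, I would instead argue with a Gehring-type reverse Hölder inequality. On Lipschitz domains, harmonic measure is $A_\infty$ with respect to $\cH^1$ (Dahlberg), and the $A_\infty$ constants are uniform in $k$ because the Lipschitz character is uniform. This already gives $p>1$. To reach the sharp exponent $p<(2\pi-\gamma)/(\pi-\gamma)=1/(1-a)$, I would rely on the local estimate
\[ f_{E_k}(y)\lesssim \frac{\mu_{E_k}(B_r(x))}{r}\Bigl(\frac{r}{|y-x_c|}\Bigr)^{1-a}, \]
where $x_c$ is the nearest point of strong curvature concentration. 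At small scales the outer sector angle is nearly $2\pi-\gamma$ only near genuine corners; elsewhere it is nearly $\pi$, which gives a bounded density. Integrating $|y-x_c|^{-(1-a)p}$ is finite exactly when $p<1/(1-a)$.

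\textbf{Main obstacle.} The hardest step is making this uniform in $k$: the bodies $E_k$ may have many small corners accumulating, and their angles need not be $\ge\gamma$. Hausdorff convergence only controls angles at scale $\gtrsim \mathrm{dist}_H(E_k,E)$. Below that scale the sector bound can degrade, but $E_k$ is still convex, so every angle lies in $(0,\pi]$. My plan is a scale split. For $s\ge\eta_k:=\mathrm{dist}_H(E_k,E)^{1/2}$, use the decay with exponent $a-\delta$. For $s<\eta_k$, use the trivial convex exponent, i.e.\ decay $s^{1/2}$ relative to the scale-$\eta_k$ mass. Then check that choosing $\delta=\delta(p)$ keeps the total $L^p$ norm bounded once $k$ is large depending on $p$.

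The final claim follows by taking $E_k=E$. Since $\gamma>0$, we have $(2\pi-\gamma)/(\pi-\gamma)>2$, so some $p>2$ is admissible.
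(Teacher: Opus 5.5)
There is a genuine gap, and it sits exactly at the step that should produce the threshold $(2\pi-\gamma)/(\pi-\gamma)$.

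\textbf{The sharp exponent.} Your sector comparison needs the cone with vertex at $y$ itself, which convexity provides. Even then it only gives upper bounds $\mu_{E_k}(B_s(y))\lesssim (s/r)^{a}\mu_{E_k}(B_r(y))$. Bounds of this Frostman type do not control the density in any $L^p$ with $p>1$: mollified $a$-dimensional Cantor measures satisfy them uniformly while their $L^p$ norms blow up.

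Dahlberg's theorem gives more than you say. It yields a reverse H\"older inequality with exponent $2$, hence by Gehring a uniform $L^{2+\eps}$ bound depending only on the Lipschitz character. That already gives the final ``in particular'', but the exponent obtained this way has nothing to do with $\gamma$.

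The bridge you propose is a pointwise bound $f_{E_k}(y)\lesssim r^{-1}\mu_{E_k}(B_r(x))(r/|y-x_c|)^{1-a}$ around a single ``nearest point of strong curvature concentration''. It is not proved, and it is not the right model:
\begin{itemize}
\item corners of a convex curve can accumulate;
\item the effects of nearby corners multiply, since their exponents add at scales above their separation;
\item a local angle that is only \emph{nearly} $\pi$ at every small scale gives a factor like $d^{-\eps}$, not a bounded density.
\end{itemize}

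What is missing is an exact multiplicative structure. The paper only quotes \cite{GolNovRuf16}, where the mechanism is conformal. Let $\Phi:\{|z|>1\}\to\R^2\setminus E$ be the exterior Riemann map. Then $f_E(\Phi(e^{is}))=(2\pi|\Phi'(e^{is})|)^{-1}$. For convex $E$ the Schwarz--Christoffel type formula gives $\log|\Phi'(e^{is})|=\log c_E-\frac1\pi\int\log\frac{1}{|e^{is}-e^{it}|}\,d\beta(t)$, where $\beta$ is the turning measure of $\partial E$, with atoms of mass $\pi-\theta$ at corners of angle $\theta$. Hence $\|f_E\|^p_{L^p(\partial E)}$ is a constant times $\int_0^{2\pi}\exp\big(\frac{p-1}{\pi}\int\log\frac{1}{|e^{is}-e^{it}|}\,d\beta(t)\big)\,ds$. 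By Jensen's inequality this is finite as soon as $\frac{p-1}{\pi}\sup_{|I|\le r}\beta(I)<1$ for some $r>0$, i.e.\ exactly for $p<(2\pi-\gamma)/(\pi-\gamma)$. Accumulating corners cost nothing in this formulation.

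\textbf{Uniformity in $k$.} This part is misdiagnosed. For convex bodies, Hausdorff convergence does control angles at all small scales. Suppose an arc of $\partial E_k$ of vanishing length had turning $\ge\pi-\gamma+\delta$. The supporting lines at its endpoints would bound a wedge of opening $\le\gamma-\delta$ containing $E_k$, with vertex close to $\partial E_k$. In the limit, $E$ would then have an angle $\le\gamma-\delta$, a contradiction.

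Equivalently, $\beta_k\rightharpoonup\beta$, and upper semicontinuity on closed arcs gives $\sup_{|I|\le r}\beta_k(I)\le\pi-\gamma+\delta$ for all $k\ge k(\delta)$. This is precisely why the constant is uniform for $k$ large depending on $p$.

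The scale split you propose would instead fail. With the slit exponent $1/2$ below $\eta_k$, your bounds allow densities of size $\eta_k^{a-1/2}d^{-1/2}$ on a set of length $\eta_k$. These are not in $L^p$ for any $p\ge 2$, so your final check breaks down on the whole range $[2,(2\pi-\gamma)/(\pi-\gamma))$.
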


\begin{remark}\label{non-bounded eq}
    If the boundary of a convex body $E$ is non-regular at a point $x$ (so the corresponding angle of $E$ at $x$ is not equal to $\pi$) then the density of the optimal measure $\mu_E$ is not bounded at $x$. This is highlighted in \cite[Example 6.5]{GolNovRuf13}, where the authors considered a harmonic function defined in the exterior of a convex cone and with constant boundary conditions. The same example also provides a motivation for the integrability threshold $(2\pi - \gamma)/(\pi - \gamma)$ appearing in the previous theorem.
\end{remark}
\end{subsection}

\begin{subsection}{Capillarity}\label{capillarity} $\,$

In this section we recall some classical results about capillarity theory in the upper half-space $H=\{ x \in \R^n:x_n \geq 0\}$, see \cite[Chapter 19]{maggi2012sets}. Let us point out that all the results described here hold also in the larger class of sets of finite perimeter but for simplicity we reduce ourselves to convex sets. For $\beta \in (-1,1)$, if $E \subset H$ is a convex body we denote
\[ \Pb(E) = P\lt(E,H^\circ\rt) - \beta P\lt(E,\partial H\rt).\]
Notice that  $P(E)=P\lt(E, H^\circ\rt) +P\lt(E,\partial H\rt)$. By \cite[Proposition 19.22]{maggi2012sets} we also have
\begin{equation}\label{cakeestimate}
   P\lt(E,H^\circ \rt) > P\lt(E,\partial H\rt).
\end{equation}
From \eqref{cakeestimate}, we infer that for $\beta \in (-1,1)$
\[ \Pb(E) = \frac{1+\beta}{2} \lt(P\lt(E, H^\circ \rt) - P\lt(E,\partial H\rt) \rt) + \frac{1-\beta}{2} P(E) \geq \frac{1-\beta}{2} P(E). \]
In particular, for any $\beta \in (-1, 1)$ and $E \subset H$ we have 
\begin{equation}\label{capestimate2}
    P(E) \leq C_{\beta} \, \Pb(E).
\end{equation}
We now consider the variational problem
\begin{equation}\label{minproblemcap}
    \min \lt \{ \Pb(E) : E \subset H, |E|=1 \rt\}.
\end{equation}
By \cite[Proposition 19.21]{maggi2012sets}, minimizers $E^{\beta}$ of \eqref{minproblemcap} are suitably truncated balls lying on $\partial H$. More precisely, setting $T^{\beta}:=\{x \in \overline{B_1}: x\cdot e_n \geq \beta\}$ and
\begin{equation}\label{minPbeta} B^{\beta}=B^{\beta}(1) := \frac{1}{|T^{\beta}|^{1/n}}\lt(T^{\beta} - \beta e_n \rt),\end{equation}
minimizers are sets of the form $B^{\beta}_x:=B^{\beta} + x$ for $x\in \partial H$. If instead we consider the problem \eqref{minproblemcap} with volume constraint $|E|=m$ for some $m >0$, the same holds with sets of the form
\[ B^{\beta}(m) := \frac{m^{1/n}}{|T^{\beta}|^{1/n}}\lt( T^{\beta} - \beta e_n \rt) \]
and their translations $B^{\beta}_x(m):=B^{\beta}(m) + x$ for $x\in \partial H$. Let now $\gamma_\beta$ be the contact angle between $B^{\beta}$ and $\partial H$. By construction we have the validity of Young's law
\[ \cos \gamma_{\beta} = \beta.\]
Denoting by $\nu_H$ and $\nu^{\beta}$ the outer unit normals of $H$ and $B^{\beta}$ respectively, the equivalent formulation of Young's law $\nu_H \cdot \nu^{\beta} = - \beta$ holds at the contact points between $\partial B^{\beta} \cap H$ and $\partial H$. As a consequence of minimality we have the following isoperimetric inequality, if $E \subset H$ is a convex body then
\[ \frac{\Pb(E)}{|E|^{\frac{n-1}{n}}} \geq \frac{\Pb(B^{\beta})}{|B^{\beta}|^{\frac{n-1}{n}}} \]
with equality if and only if $E=B^{\beta}(|E|)$ up to a translation. Let $E \subset H$ be a convex body such that $|E|=m$, we define the Fraenkel asymmetry and the isoperimetric deficit respectively as
\begin{equation}\label{fraenkel} \alpha_{\beta}(E) := \inf \lt\{ \frac{|E \Delta B^{\beta}_x(m)|}{|B^{\beta}_x(m)|}: x \in \partial H \rt\} \qquad \text{and} \qquad D_{\beta}(E):= \frac{\Pb(E)-\Pb(B^{\beta}(m))}{\Pb(B^{\beta}(m))}. \end{equation}
The Fraenkel asymmetry  measures the $L^1$ distance between the competitor $E$ and the set of minimizers of $\Pb$. We finally recall the sharp quantitative isoperimetric inequality for $\Pb$, see \cite{kreutz2024note,pascale2024quantitative,carazzato2025quantitative}. For any $E \subset H$,
\begin{equation}\label{quantisop}
    \alpha_{\beta}(E)^2 \les D_{\beta}(E).
\end{equation}

\end{subsection}
\end{section}

\begin{section}{Existence of minimizers}\label{sec4}

In this section we prove Theorem \ref{theo:existence} as well as convergence of minimizers of \eqref{minproblem} to minimizers of $\Pb$ as the charge vanishes. Finally, when $n=\alpha=2$ we show how that it is possible to relax the volume constraint $|E|=1$.
We recall the definition \eqref{defFabQ} of $\cF_{\alpha, \beta, Q}$.
If either $\beta \geq 1$ or $\beta \leq -1$, then we can respectively prove non-existence of minimizers or reduce the problem to the study of the functional $P+Q^2\cIa$. This is relatively standard and is discussed in the following remarks.

\begin{remark}[Non-existence of minimizers when $\beta \geq 1$]\label{nonexistence} To show non-existence we follow the same strategy as \cite[Remark 19.19]{maggi2012sets} and for $\alpha \in (0,n]$ we consider
\begin{equation}\label{illposedinf}
    \psi(\alpha, \beta) := \inf \lt\{ \cF_{\alpha, \beta, Q}(E) : E \subset H,\, E \text{ convex body},|E|=1\rt\}.
\end{equation}
For $R>0$, we set $\eps_R:= \lt(\omega_{n-1}R^{n-1} \rt)^{-1}$ and build a sequence of convex bodies $\{E_R\}_{R>0} \subset H$ defined as
\[ E_R := \lt\{ x\in \R^n: \lt| x_1^2 + \ldots + x_{n-1}^2\rt|^{1/2}\le R, 0\le x_n \le \eps_R\rt\}.\]
By construction we have $|E_R|=1$ for all $R>0$. We claim that for all $\alpha \in (0,n)$ we have $\cIa(E_R) \rightarrow 0$  and $\cI_n(E)\to -\infty$ as $R \rightarrow + \infty$. Leaving the proof of the claim for later, we compute
\[ \begin{split} \cF_{\alpha, \beta, Q}(E_R) & =(1-\beta)\omega_{n-1}R^{n-1} + (n-1)\omega_{n-1}R^{n-2}\eps_R + Q^2 \cIa(E_R) \\ & = (1-\beta)\omega_{n-1}R^{n-1} + \frac{n-1}{R} + Q^2 \cIa(E_R). \end{split}\]
If $\beta>1$ or $\beta=1$ and $\alpha=n$, then $\cF_{\alpha, \beta, Q}(E_R) \rightarrow - \infty$ as $R \rightarrow + \infty$, so $\psi(\alpha, \beta)=-\infty$ and no minimizer exists. If instead $\beta=1$ and $\alpha\in(0,n)$; then $\cF_{\alpha, 1, Q}(E_R) \rightarrow 0$ as $R \rightarrow + \infty$, so that $\psi(\alpha,1) = 0$. However by \eqref{cakeestimate} we get that $P_1(E)>0$ for all $E \subset H$, hence minimizers cannot exist either. It remains to prove the claim above. Since the case $\alpha=n$ is easier we only consider the case $\alpha<n$. We denote by $D_R$ the $(n-1)$-dimensional disk in $\R^n$ and set
\[ E'_R:= D_R \times (-\eps_R/2, \eps_R/2). \]
By translation invariance of $\cIa$ we have $ \cIa(E_R) = \cIa(E'_R)$, so it is equivalent to prove that $\cIa(E'_R) \rightarrow 0$ as $R \rightarrow + \infty$. We start with the easier case $\alpha \in (1,n)$. Given $M>0$ we have $\cIa(E'_R)\leq \cIa(D_M)$ for all $R >M$ by monotonicity of $\cIa$ under inclusion, passing to the limit as $R \rightarrow + \infty$ we obtain
\begin{equation}\label{Ialphalim} \lim_{R\rightarrow +\infty} \cIa(E'_R) \leq \cIa(D_M) \stackrel{\eqref{scaling}}{=} \frac{1}{M^{n-\alpha}} \cIa(D_1) \qquad \text{for all } M>0.\end{equation}
To estimate $\cIa(D_1)$ we test its definition with the uniform measure $\omega_{n-1}^{-1} \cH^{n-1} \llcorner D_1$ on $D_1$ and, denoting $x=(x', x_n)$ with $x'\in \R^{n-1}$, we get
\[ \begin{split} \cIa(D_1) &  \les \int_{D_1} \int_{D_1} \frac{dy'}{|x'-y'|^{n-\alpha}}dx' \lesssim \int_{D_1} \int_{D_1} \frac{dy'}{|y'|^{n-\alpha}}dx' \\ & \lesssim \int_{D_1} \frac{dy'}{|y'|^{n-\alpha}} \lesssim \int_0^1 \frac{d\rho}{\rho^{2-\alpha}}. \end{split}\]
The final quantity is finite as $\alpha \in (1,n)$, hence we pass to the limit as $M \rightarrow +\infty$ in \eqref{Ialphalim} and prove the claim. To study the remaining case $\alpha \in (0,1]$, we rescale $E'_R$ to $F_R:= D_1 \times (-h_R/2, h_R/2)$ with $h_R= (\omega_{n-1}R^n)^{-1}$, so that $|F_R|= R^{-n}$ and $E'_R=RF_R$. Again, we exploit the scaling property  \eqref{scaling} of $\cIa$, test the definition of $\cIa(F_R)$ with the uniform measure on $F_R$ and use that for $x\in F_R$ we have $F_R\subset 2 F_R +x$ to find
\[ \begin{split} \cIa(E'_R) & = \frac{1}{R^{n-\alpha}}\cIa(F_R) \leq \frac{1}{R^{n-\alpha}} \frac{1}{|F_R|^2} \int_{F_R}\int_{F_R} \frac{dy}{|x-y|^{n-\alpha}}dx \\ & = R^{n+\alpha} \int_{F_R} \int_{F_R} \frac{dy}{|x-y|^{n-\alpha}} dx \leq  R^{n+\alpha} \int_{F_R} \int_{2F_R+x} \frac{dy}{|x-y|^{n-\alpha}} dx\\
&\leq R^{\alpha} \int_{2F_R} \frac{dy}{|y|^{n-\alpha}} \les R^{\alpha} \int_{F_R} \frac{dy}{|y|^{n-\alpha}}. \end{split} \]
To estimate the last quantity we cover the integration domain with $F_R \setminus \lt( D_{h_R} \times (-h_{R/2}, h_{R/2})\rt)$ and $B_{2h_{R}}$. We first consider the case $\alpha \in (0,1)$, we compute
\[ \int_{B_{2h_R}} \frac{dy}{|y|^{n-\alpha}} \lesssim \int _0^{2h_R} \frac{d\rho}{\rho^{1-\alpha}} = \frac{(2h_R)^{\alpha}}{\alpha} \lesssim \frac{1}{R^{n\alpha}}\]
and
\begin{equation}\label{secondcomp} \int_{F_R \setminus \lt( D_{h_R} \times (-h_{R/2}, h_{R/2})\rt)} \frac{dy}{|y|^{n-\alpha}} \leq h_R \int_{D_1 \setminus D_{h_{R}}} \frac{dy'}{|y'|^{n-\alpha}} \lesssim h_{R} \int_{h_R}^1 \frac{d\rho}{\rho^{2-\alpha}} \lesssim h_R^{\alpha} \lesssim \frac{1}{R^{n\alpha}}. \end{equation}
Altogether we find
\[ \cIa(E'_R) \lesssim R^{-(n-1)\alpha} \]
and we obtain the claim by passing to the limit as $R \rightarrow \infty$. If instead $\alpha =1$ the same argument applies, except that the last two terms in \eqref{secondcomp} are replaced by
\[-h_R \log h_R \lesssim \frac{\log R}{R^n}. \]
Therefore, when $\alpha = 1$ we have
\[ \cIa(E'_R) \lesssim \frac{1}{R^{n-1}}+ \frac{\log R}{R^{n-1}} \]
and as before we conclude $\cIa(E'_R) \rightarrow 0$ as $R \rightarrow + \infty$. \end{remark}

\begin{remark}[No capillarity phenomena when $\beta \leq -1$] If $\beta \leq -1$ then for $\alpha\in(0,n]$ and $E \subset H$ we have that
\[ \cF_{\alpha, \beta, Q}(E) = P(E) + (-\beta -1)P(E, \partial H) + Q^2 \cIa(E) \geq P(E) + Q^2 \cIa(E). \]
Hence the study of \eqref{illposedinf} reduces to that of $\cF_{\alpha, Q}(E)= P(E) + Q^2 \cIa(E)$ in $\R^n$ and under convexity constraint, already addressed in \cite{GolNovRuf16}.
\end{remark}
We now turn to the existence of minimizers of \eqref{minproblem}. We recall that, by \cite[Lemma 4.1]{esposito2005quantitative}, for every $n\geq 2$, if $E \subset \R^n$ is a convex body then we have
\begin{equation}\label{diamestimate}
    \text{diam}(E) \les \frac{P(E)^{n-1}}{|E|^{n-2}}.
\end{equation}

\begin{proof}[Proof of Theorem \ref{theo:existence}]
    Let $\{E_k\}_{k\in\N} \subset H$ be a minimizing sequence for \eqref{minproblem}, namely
    \[ \lim_{k \rightarrow +\infty} \cF_{\alpha, \beta, Q}(E_k) = \inf \left \{ \cF_{\alpha, \beta, Q}(E) : E \subset H,\, E \text{ convex body},\, |E|=1 \right \} < +\infty. \]
    We show that the diameters $\{\text{diam}(E_k)\}_{k \in \N}$ are uniformly bounded. In the case $\alpha \in (0,n)$ we have $\cIa(E_k) \geq 0$ for all $k \in \N$, hence by \eqref{diamestimate} and \eqref{capestimate2} we find
    \[ \text{diam}(E_k) \lesssim P(E_k)^{n-1} \lesssim \Pb(E_k)^{n-1} \leq \cF_{\alpha, \beta, Q}(E_k)^{n-1} \lesssim 1+Q^{2(n-1)}. \]
    We now focus on the case $\alpha=n$. Here we also have to show that  \eqref{minproblem} admits a finite lower bound since $\cI_{n}(E_k)$ could be negative as well. By inequalities \eqref{diamestimate} and \eqref{capestimate2},
    \begin{equation}\label{newtrick1} \text{diam}(E_k)^{\frac{1}{n-1}} \les \Pb(E_k).  \end{equation}
    By definition of $\cI_n(E)$ and since $|x-y| \leq \text{diam}(E_k)$ for all $x,y \in E_k$, we also obtain
    \begin{equation}\label{newtrick2}  -Q^2 \log(\text{diam}(E_k)) \leq Q^2 \cI_n(E_k).\end{equation}
    We sum up the two inequalities and find
    \begin{equation}\label{uniform bound diam} \text{diam}(E_k)^{\frac{1}{n-1}} -Q^2 \log(\text{diam}(E_k)) \leq \cF_{n, \beta, Q}(E_k) \lesssim 1+Q^2.\end{equation}
    Hence, there is $R>0$ such that $\text{diam}(E_k) \leq R$. Up to translation we may assume that $E_k \subset B_R$, so by monotonicity of $\cI_n$ we infer $\cI_{n}(E_k) \geq \cI_{n}(B_R)$. As a consequence,
    \[  \inf \left \{ \cF_{n, \beta, Q}(E) :\, E \subset H,\,E \text{ convex body},\,|E|=1\right \} >-\infty.\]
     Since $E_k$ is bounded, by Blaschke selection theorem \cite[Theorem 1.8.7]{schneider2013convex}, we can extract a not relabelled subsequence which converges in the Hausdorff and in the $L^1$ topologies to a convex body $E \subset H$ of volume $1$. Since $\beta \in (-1,1)$, the term $\Pb$ is lower semicontinuous with respect to the $L^1$ topology by \cite[Proposition 19.27]{maggi2012sets}. In addition, $\cIa$ is continuous with respect to Hausdorff convergence of convex bodies whenever the limit has positive volume, by Lemma \ref{contIa}. Hence, we combine these facts and conclude that $E$ is a minimizer of \eqref{minproblem}.
\end{proof}

\begin{remark}\label{uniform bound rem}
    Let $Q_0>0$ and let $E_Q$ be a minimizer of \eqref{minproblem} for some $Q \leq Q_0$. By inequality \eqref{uniform bound diam}
    and since $|E_Q|=1$, there exists $C_{Q_0}$ depending only on $Q_0$ for which we have the uniform bound $ 
    -C_{Q_0}\leq \cI_n(E_Q) \leq C_{Q_0}$ for all $Q \leq Q_0$. The lower bound follows from the previous proof, whereas the upper bound is obtained by testing $\cI_n(E_Q)$ with the uniform probability measure on $E_Q$.
\end{remark}

\begin{remark}\label{non-empty contact}
    Let $E$ be a minimizer of \eqref{minproblem}. Since $P(E) \geq \Pb(E)$ (which holds for all $\beta \in (-1,1)$) we may assume  that $E \cap \partial H \neq \emptyset$. In other words, the contact set between $E$ and $\partial H$ is always non-empty. This however does not exclude a priori that $\cH^{n-1}(\partial E \cap \partial H)=0$, see Corollary \ref{cor:poslenght}. \end{remark}

We now exploit the quantitative isoperimetric inequality for $\Pb$ to show that minimizers of \eqref{minproblem} converge to minimizers of $\Pb$ in the $L^1$ and in the Hausdorff topologies. Recall the notation introduced in Subsection \ref{capillarity}.

\begin{lemma}\label{hausdorffconv}
    Let $n \geq 2$ and $\alpha \in (0,n]$. Let $\beta \in (-1,1)$ and $E_Q$ be a minimizer of \eqref{minproblem} for $Q >0$. Up to translation we have that $E_Q \rightarrow B^{\beta}$ as $Q \rightarrow 0$ in the $L^1$ and in the Hausdorff topologies.
\end{lemma}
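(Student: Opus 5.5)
The plan is to compare $E_Q$ with the optimal truncated ball $B^{\beta}$, which is admissible for \eqref{minproblem}, and to show that the isoperimetric deficit $D_\beta(E_Q)$ is $O(Q^2)$. The quantitative inequality \eqref{quantisop} then gives $L^1$ convergence, and convexity upgrades this to Hausdorff convergence.

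\textbf{Step 1: energy comparison.} By minimality,
\[ \Pb(E_Q) + Q^2\cIa(E_Q) \le \Pb(B^\beta) + Q^2 \cIa(B^\beta). \]
For $\alpha\in(0,n)$ we have $\cIa(E_Q)\ge 0$, so $\Pb(E_Q)-\Pb(B^\beta)\le Q^2\cIa(B^\beta)$. For $\alpha=n$, Remark \ref{uniform bound rem} (with $Q\le Q_0=1$) gives $\cI_n(E_Q)\ge -C$. Hence $\Pb(E_Q)-\Pb(B^\beta)\le Q^2(\cI_n(B^\beta)+C)$. In both cases $D_\beta(E_Q)\lesssim Q^2$.

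\textbf{Step 2: $L^1$ convergence.} Combining Step 1 with \eqref{quantisop} gives $\alpha_\beta(E_Q)\lesssim Q\to 0$. So there are points $x_Q\in\partial H$ with $|E_Q\Delta B^\beta_{x_Q}|\to 0$. Translating horizontally by $-x_Q$ (which keeps $E_Q\subset H$ and preserves the energy), we get $E_Q\to B^\beta$ in $L^1$.

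\textbf{Step 3: Hausdorff convergence.} Step 1 shows $\Pb(E_Q)$ is uniformly bounded, so $P(E_Q)$ is bounded by \eqref{capestimate2}, and \eqref{diamestimate} bounds the diameters uniformly. The translated sets also intersect $B^\beta$ for small $Q$ (their symmetric difference is small), so they lie in a fixed ball. By Blaschke selection, every subsequence has a further subsequence converging in the Hausdorff sense to a compact convex set $K$. Hausdorff convergence of convex bodies implies $L^1$ convergence of indicators, since volume is continuous. Therefore $|K\Delta B^\beta|=0$, and since both sets are closed convex sets with nonempty interior, $K=B^\beta$. Uniqueness of the limit then gives convergence of the whole family.

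The main obstacle is the case $\alpha=n$, where the logarithmic energy can be negative, so the lower bound on $\cI_n(E_Q)$ must be uniform in $Q$. This is exactly what Remark \ref{uniform bound rem} provides through \eqref{uniform bound diam}. The step from $L^1$ to Hausdorff convergence also relies on convexity, but this part is routine.
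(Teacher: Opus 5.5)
Your proposal is correct and follows essentially the paper's argument. Both compare $E_Q$ with $B^\beta$ by minimality, control $\cIa(E_Q)$ from below (via Remark \ref{uniform bound rem} when $\alpha=n$), apply the quantitative inequality \eqref{quantisop} to get $|E_Q\Delta B^\beta|\lesssim Q$, and then use convexity to pass from $L^1$ to Hausdorff convergence. Your Step 3, the Blaschke selection argument, simply spells out what the paper compresses into one sentence.
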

\begin{proof}
Let $B^{\beta}_x$ be the minimizer for the Fraenkel asymmetry $\alpha_{\beta}(E_Q)$ defined in \eqref{fraenkel}, up to translation we may assume that  $x=0$. Inequality \eqref{quantisop} implies that
\[ |E_Q \Delta B^{\beta}|^2 \lesssim \lt( \Pb(E_Q) - \Pb(B^{\beta}) \rt).\]
The minimality of $E_Q$ yields
\[ |E_Q \Delta B^{\beta}|^2 \lesssim \Pb(E_Q) - \Pb(B^{\beta}) \leq Q^2 \lt( \cIa(B^{\beta}) - \cIa(E_Q) \rt) \leq Q^2 (|\cIa(B^{\beta})|+ |\cIa(E_Q)|) \lesssim Q^2, \]
where  in the case $\alpha=n$, the last inequality follows from the uniform boundedness of the logarithmic energy of minimizers of $\mathcal{F}_{n, \beta, Q}$ from Remark \ref{uniform bound rem}. Taking the limit as $Q \rightarrow 0$, in both cases we infer that both
\begin{equation}\label{perconv}
    \Pb(E_Q) \longrightarrow \Pb(B^{\beta}) \qquad \text{and} \qquad E_Q \longrightarrow B^{\beta} \quad \text{in } L^1(\R^n). \end{equation}
Finally, the convergence $E_Q \rightarrow B^{\beta}$ as $Q \rightarrow 0$ holds in the Hausdorff sense too since we are considering convex bodies.
\end{proof}

\begin{remark}
    A consequence of the previous Lemma is the $L^1$ and Hausdorff convergence of the traces on $\partial H$, namely 
    \begin{equation}\label{convtrace}
         E_Q \cap \partial H  \rightarrow B^{\beta} \cap \partial H \qquad \text{in } L^1(\partial H) \quad \text{as } Q \rightarrow 0.
    \end{equation}
    Indeed, thanks to the Hausdorff convergence of $E_Q$ to $B^{\beta}$, there exists a convex body $K \subset B^{\beta} \cap \partial H$ such that $E_Q \cap \partial H \rightarrow K $ as $Q \rightarrow 0$ in the Hausdorff topology. Combining the first relation in \eqref{perconv} and the continuity of the perimeter under Hausdorff convergence of convex bodies we have that also $\cH^{n-1}(E_Q \cap \partial H) \rightarrow \cH^{n-1}(B^{\beta} \cap \partial H)$ as $Q \rightarrow 0$, therefore $K = B^{\beta} \cap \partial H$ and \eqref{convtrace} holds.
    \end{remark}

In Section \ref{sec3} we focus on the planar logarithmic case $ \alpha =n=2$. As a preliminary result, we show here that it is possible to relax the volume constraint in the minimization problem \eqref{minproblem} by adding a volume penalization to the functional $\mathcal{F}_{2,\beta, Q}$. This procedure is commonly used in isoperimetric type problems, see for example \cite{esposito2011remark, goldman2012volume, GolNovRuf16, Gol22, lamboley2023regularity}. For $\Lambda >0$ we define the functional
\[ \mathcal{F}_{2, \beta, Q, \Lambda}(E):= \mathcal{F}_{2,\beta, Q}(E) + \Lambda\lt| |E| - 1\rt|. \]
\begin{lemma}\label{volconstraint}
    Let $n=2$. For every $Q_0>0$, there exists $\Lambda_0>0$ depending only on $Q_0$ such that if $\Lambda>\Lambda_0$ and $Q \leq Q_0$, then minimizers of
    \begin{equation}\label{minproblemfree}
    \min \left \{ \cF_{2, \beta, Q, \Lambda}(E) : E \subset H,\,E \text{ convex body}\right \}
    \end{equation}
    are also minimizers of \eqref{minproblem} and vice-versa.
\end{lemma}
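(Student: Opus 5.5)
The argument follows the standard scheme of \cite{esposito2011remark, GolNovRuf16}. In dimension $2$ the logarithmic energy scales additively, which makes dilations very convenient. For a convex body $E\subset H$ with $0\in\partial H$ (up to horizontal translation) and $\lambda>0$, the set $\lambda E$ is still a convex body in $H$. Moreover $\Pb(\lambda E)=\lambda\Pb(E)$, and by \eqref{scaling} $\cI_2(\lambda E)=\cI_2(E)-\log\lambda$. Let $m_0$ denote the minimal value of \eqref{minproblem}; by Remark \ref{uniform bound rem} it is bounded uniformly for $Q\le Q_0$.

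\emph{Step 1: minimizers of \eqref{minproblemfree} have volume $1$.} Let $E$ be a minimizer of \eqref{minproblemfree}, write $m=|E|$, and suppose $m\neq 1$. Set $\lambda=m^{-1/2}$, so that $|\lambda E|=1$. Comparing $E$ with $\lambda E$ gives
\[ \Pb(E)+Q^2\cI_2(E)+\Lambda|m-1|\le \lambda\Pb(E)+Q^2\cI_2(E)-Q^2\log\lambda, \]
that is,
\[ \Lambda|m-1|\le (m^{-1/2}-1)\Pb(E)+\tfrac{Q^2}{2}\log m. \]
Before using this I need a priori bounds on $m$ and $\Pb(E)$ that are uniform in $\Lambda$ and $Q\le Q_0$. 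Testing \eqref{minproblemfree} with $B^\beta$ gives $\cF_{2,\beta,Q,\Lambda}(E)\le C(Q_0)$. As in the proof of Theorem \ref{theo:existence}, \eqref{newtrick1}--\eqref{uniform bound diam} yield $\mathrm{diam}(E)-Q^2\log\mathrm{diam}(E)\lesssim \Pb(E)+Q^2\cI_2(E)$, so $\Pb(E)+Q^2\cI_2(E)$ is bounded from below by a constant depending only on $Q_0$. Hence $\Lambda|m-1|\le C(Q_0)$, so $|m-1|\le C/\Lambda\le 1/2$ once $\Lambda$ is large. With $m\in[1/2,3/2]$, the bound $\mathrm{diam}(E)\lesssim 1$ from the display above and $\cI_2(E)\ge\cI_2(B_{\mathrm{diam}E})$ also bound $\cI_2(E)$ from below, and therefore $\Pb(E)\le C(Q_0)$.

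With these bounds, for $m\in[1/2,3/2]$ we have $|m^{-1/2}-1|\le C|m-1|$ and $|\log m|\le C|m-1|$. The comparison inequality then becomes $\Lambda|m-1|\le C(Q_0)|m-1|$, which is a contradiction when $\Lambda>\Lambda_0:=C(Q_0)$ and $m\ne 1$. So $|E|=1$. Since the penalization vanishes on the admissible class of \eqref{minproblem}, $E$ minimizes \eqref{minproblem}.

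\emph{Step 2: the converse.} Problem \eqref{minproblemfree} has a minimizer: the bounds of Step 1 apply to minimizing sequences, and the Blaschke selection theorem together with Lemma \ref{contIa} gives existence (volumes stay near $1$, so the limit has interior density bounds). Let $E_1$ be any minimizer of \eqref{minproblem} and $E$ a minimizer of \eqref{minproblemfree}. By Step 1, $|E|=1$, hence
\[ \cF_{2,\beta,Q,\Lambda}(E_1)=\cF_{2,\beta,Q}(E_1)\le \cF_{2,\beta,Q}(E)=\cF_{2,\beta,Q,\Lambda}(E), \]
so $E_1$ also minimizes \eqref{minproblemfree}.

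The main obstacle is getting a priori bounds independent of $\Lambda$. Because $\cI_2$ can be negative, the lower bound on $\Pb+Q^2\cI_2$ must come from the diameter control \eqref{uniform bound diam}, and that control has to be run for sets of arbitrary volume before the volume is known to be close to $1$.
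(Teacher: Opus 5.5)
Your proof is correct and follows essentially the same route as the paper. Both arguments get a priori bounds on the diameter, on $\cI_2$ and on $\Pb$ from \eqref{newtrick1}--\eqref{newtrick2}, then compare $E$ with its dilation to unit volume, using $\Pb(\lambda E)=\lambda\Pb(E)$ and $\cI_2(\lambda E)=\cI_2(E)-\log\lambda$, which gives $\Lambda\,||E|-1|\lesssim ||E|-1|$. The differences are cosmetic: you read off $||E|-1|\le C/\Lambda$ directly from the energy bound, which also gives non-degenerate limits in the existence step where the paper rules out $|E|=0$ separately, and you handle $|E|<1$ and $|E|>1$ in one inequality.
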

\begin{proof}
     We follow the proof of \cite[Lemma 4.1]{GolNovRuf16}.
 Fix $Q_0>0$ and $Q \leq Q_0$. Let $\{E_k\}_{k \in\N} \subset H$ be a minimizing sequence for \eqref{minproblemfree}. Let $B$ be a ball with $|B|=1$, for large enough $k$ we have $\cF_{2, \beta, Q, \Lambda}(E_k) \leq \cF_{2, \beta, Q, \Lambda}(B)$. Arguing as in the proof of Theorem \ref{theo:existence},  \eqref{newtrick1} and \eqref{newtrick2} imply
    \[ \begin{split}
        \text{diam}(E_k)- Q^2 \log(\text{diam}(E_k)) \les \cF_{2, \beta, Q, \Lambda}(E_k) \leq \cF_{2, \beta, Q, \Lambda}(B) = \cF_{2, \beta, Q}(B) \lesssim 1+Q^2_0. \end{split} \]
    Hence, as before there are $r,R>0$ depending only on $Q_0$ such that for large enough $k$ we have $r\leq \text{diam}(E_k) \leq R$. Up to translation we may assume that $E_k \subset B_R$, so by monotonicity of $\cI_2$ we infer $\cI_{2}(E_k) \geq \cI_{2}(B_R)$. As a consequence, we find
    \[  \inf \left \{ \cF_{2, \beta, Q, \Lambda}(E) : E \subset H,\,E \text{ convex body}\right \} = \lim_{k \rightarrow + \infty} \cF_{2, \beta, Q, \Lambda}(E_k) >-\infty.\]
    Up to passing to a subsequence, $\{E_k\}_{k \in\N}$ converges in the $L^1$ and in the Hausdorff topologies to a convex set $E \subset H$ whose diameter is also bounded by $R$. Under these conditions we have continuity of the volume term $\Lambda ||\cdot| - 1|$ and lower semicontinuity of $\Pb$ and $\cI_2$, so we conclude
    \[ \cF_{2, \beta, Q, \Lambda}(E) \leq \inf \left \{ \cF_{2, \beta, Q, \Lambda}(E) : E \subset H,\,E \text{ convex body}\right \}.\]
    To obtain the equality we only need to show that $|E|>0$. Let $C_{Q_0}>0$ be the constant depending only on $Q_0$ given by Remark \ref{uniform bound rem}. We claim that if
    \[ \Lambda > \Lambda_0:= 2P(B) + Q_0^2 \lt(\cI_2(B) + C_{Q_0} \rt) \]
    then we have $|E|>0$. Indeed, when $|E|=0$ the inequality 
    \[ \cF_{2, \beta, Q, \Lambda}(E) \leq \cF_{2, \beta, Q, \Lambda}(B) \leq 2P(B) + Q^2 \cI_2(B) \]
    implies
    \[ \begin{split} \Lambda & \leq 2 P(B) - \Pb(E) + Q^2 \lt( \cI_2(B) - \cI_2(E) \rt) \\ & \leq  2P(B) + Q_0^2 \lt(\cI_2(B) + C_{Q_0} \rt)=\Lambda_0. \end{split} \]
    Hence, if $\Lambda > \Lambda_0$ then $E$ is a convex body and it is a minimizer of \eqref{minproblemfree}. Notice that the minimum in \eqref{minproblemfree} is always less or equal than the minimum in \eqref{minproblem}. Hence, it remains to prove the converse inequality. Assume that $E$ is not a minimizer for $\mathcal{F}_{2, \beta, Q}$, in this case we set
    \[ \sigma:= ||E|-1|>0.\]
    From the uniform bound on the diameter of $E$ we deduce that $\Lambda \sigma$ is also bounded by a constant depending only on $Q_0$. Assuming that $|E|=1-\sigma$ (the other case is analogous), we define a competitor for the problem \eqref{minproblemfree} as
    \[ F:= \frac{1}{(1-\sigma)^{1/2}}E.\]
    By construction of the dilation we have that $|F|=1$, $\cI_2(F) \leq \cI_2(E)$  and
    \[ \Pb(F) = \frac{1}{(1-\sigma)^{1/2}} \Pb(E). \]
    Hence, a Taylor expansion yields
    \[ \begin{split} \Lambda \sigma & =  \cF_{2, \beta, Q, \Lambda}(E) - \cF_{2, \beta, Q}(E)  \leq \cF_{2, \beta, Q, \Lambda}(F) - \cF_{2, \beta, Q}(E) \\ & = \Pb(F) + Q^2 \cI_2(F) - \Pb(E) - Q^2 \cI_2(E) \\ & \leq \Pb(E) \lt( (1-\sigma)^{-1/2} - 1 \rt)
    \\ & \leq \Pb(E) \sigma.
    \end{split} \]
    Recalling that $\Pb(E)$ is bounded by a constant depending only on $Q_0$ since $E$ is a minimizer, we obtain $\Lambda \lesssim 1+Q^2$ as well. Therefore, if $\Lambda$ is large enough, me must have $\sigma=0$ or equivalently that $E$ is a minimizer of $\cF_{2, \beta, Q}$ too.
\end{proof}
\begin{remark}\label{remminprop}
   Let $E$ be a minimizer of \eqref{minproblem} and $F\subset H$ be a convex set, then by the previous lemma we have
    \begin{equation}\label{minimalityprop} \Pb(E) \leq \Pb(F) + Q^2 \lt( \cI_2(F)- \cI_2(E)\rt) + \Lambda |E \Delta F|. \end{equation}
    This $\Lambda$-minimality relation motivates the definitions introduced in Section \ref{sec5} and it will be the starting point of our analysis in Section \ref{sec3}.
\end{remark}
\end{section}

\begin{section}{\texorpdfstring{$\Lambda$}{TEXT}-Minimizers of capillarity shape functionals under convexity constraint}\label{sec5}

The purpose of this section is to introduce the notion of $\Lambda$-minimizers of capillarity shape functionals under convexity constraint in $\R^2$. We first present the definition of a capillarity shape functionals $J_{\beta}$ in the upper half-space $H=\{x \in\R^2:x_2 \geq 0\} \subset \R^2$. Then we turn to $\Lambda$-minimizers of $J_{\beta}$ under convexity constraint and we study some of their properties.

Let us first set some notation regarding convex bodies contained in $H$. If $E \subset H$ is a convex body with $E \cap \partial H \neq \emptyset$ then there are $P_1, P_2 \in E \cap \partial H$ (with possibly $P_1=P_2$) such that
\[ E \cap \partial H = [P_1, P_2].\]
We call $P_1,P_2$ the contact points of $E$ with $\partial H$ or just contact points of $E$ for short, see Figure \ref{pic contact set} for two examples. Let $\tau_1, \tau_2$ be tangent lines to $\partial E \cap H^\circ$ at the points $P_1, P_2$. We choose $\tau_1 \neq \tau_2$ if $P_1=P_2$ but $\partial E$ is not smooth at $P_1=P_2$. For $i \in \{1,2\}$ let $\nu_E^i$ be an outer unit normal to $E$ at $P_i$ which is orthogonal to $\tau_i$. Let $\nu_H=-e_2$ be the outer unit normal to $H$ and $\gamma_i \in (0,\pi]$ be the angle defined by the condition
\[ \cos \gamma_i:= - \nu_E^i \cdot \nu_H.\]
We call $\gamma_1,\gamma_2$ the contact angles of $E$ with $\partial H$, or just contact angles of $E$ for short. In the following we just denote by $\gamma$ any of them. We now present the definitions of capillarity shape functional and of $\Lambda$-minimizer of a capillarity shape functional under convexity constraint.

\begin{figure}
    \centering
\tikzset{every picture/.style={line width=0.75pt}} 

\begin{tikzpicture}[x=0.75pt,y=0.75pt,yscale=-1,xscale=1]

\draw [color={rgb, 255:red, 155; green, 155; blue, 155 }  ,draw opacity=1 ]   (37,95.89) -- (265.5,96.23) ;
\draw [color={rgb, 255:red, 208; green, 2; blue, 27 }  ,draw opacity=1 ]   (202.03,95.93) -- (202.14,133.78) ;
\draw [shift={(202.14,135.78)}, rotate = 269.83] [color={rgb, 255:red, 208; green, 2; blue, 27 }  ,draw opacity=1 ][line width=0.75]    (10.93,-3.29) .. controls (6.95,-1.4) and (3.31,-0.3) .. (0,0) .. controls (3.31,0.3) and (6.95,1.4) .. (10.93,3.29)   ;
\draw  [draw opacity=0][dash pattern={on 4.5pt off 4.5pt}] (173.53,95.03) .. controls (173.91,82.29) and (182.43,71.63) .. (193.99,68.2) -- (202.03,95.93) -- cycle ; \draw  [dash pattern={on 4.5pt off 4.5pt}] (173.53,95.03) .. controls (173.91,82.29) and (182.43,71.63) .. (193.99,68.2) ;  
\draw    (104.94,95.85) .. controls (43.4,32.47) and (177.1,11.37) .. (202.03,95.93) ;
\draw [color={rgb, 255:red, 208; green, 2; blue, 27 }  ,draw opacity=1 ]   (202.03,95.93) -- (245.96,82.13) ;
\draw [shift={(247.87,81.54)}, rotate = 162.56] [color={rgb, 255:red, 208; green, 2; blue, 27 }  ,draw opacity=1 ][line width=0.75]    (10.93,-3.29) .. controls (6.95,-1.4) and (3.31,-0.3) .. (0,0) .. controls (3.31,0.3) and (6.95,1.4) .. (10.93,3.29)   ;
\draw [color={rgb, 255:red, 0; green, 0; blue, 0 }  ,draw opacity=1 ]   (104.94,95.85) -- (202.03,95.93) ;
\draw  [draw opacity=0][dash pattern={on 4.5pt off 4.5pt}] (85.23,74.98) .. controls (90.35,70.01) and (97.29,66.96) .. (104.94,66.96) .. controls (120.69,66.96) and (133.46,79.89) .. (133.46,95.85) -- (104.94,95.85) -- cycle ; \draw  [dash pattern={on 4.5pt off 4.5pt}] (85.23,74.98) .. controls (90.35,70.01) and (97.29,66.96) .. (104.94,66.96) .. controls (120.69,66.96) and (133.46,79.89) .. (133.46,95.85) ;  
\draw [color={rgb, 255:red, 208; green, 2; blue, 27 }  ,draw opacity=1 ]   (104.94,95.85) -- (105.05,133.7) ;
\draw [shift={(105.06,135.7)}, rotate = 269.83] [color={rgb, 255:red, 208; green, 2; blue, 27 }  ,draw opacity=1 ][line width=0.75]    (10.93,-3.29) .. controls (6.95,-1.4) and (3.31,-0.3) .. (0,0) .. controls (3.31,0.3) and (6.95,1.4) .. (10.93,3.29)   ;
\draw [color={rgb, 255:red, 208; green, 2; blue, 27 }  ,draw opacity=1 ]   (104.94,95.85) -- (86.53,114.61) -- (77.97,123.34) ;
\draw [shift={(76.57,124.76)}, rotate = 314.47] [color={rgb, 255:red, 208; green, 2; blue, 27 }  ,draw opacity=1 ][line width=0.75]    (10.93,-3.29) .. controls (6.95,-1.4) and (3.31,-0.3) .. (0,0) .. controls (3.31,0.3) and (6.95,1.4) .. (10.93,3.29)   ;
\draw [color={rgb, 255:red, 155; green, 155; blue, 155 }  ,draw opacity=1 ]   (288.86,95.97) -- (469.83,96.06) ;
\draw  [draw opacity=0][dash pattern={on 4.5pt off 4.5pt}] (363.37,95.7) .. controls (363.45,83.86) and (373.03,74.28) .. (384.83,74.28) .. controls (387.96,74.28) and (390.93,74.95) .. (393.61,76.16) -- (384.83,95.85) -- cycle ; \draw  [dash pattern={on 4.5pt off 4.5pt}] (363.37,95.7) .. controls (363.45,83.86) and (373.03,74.28) .. (384.83,74.28) .. controls (387.96,74.28) and (390.93,74.95) .. (393.61,76.16) ;  
\draw [color={rgb, 255:red, 208; green, 2; blue, 27 }  ,draw opacity=1 ]   (384.83,95.85) -- (416.34,109.75) ;
\draw [shift={(418.17,110.56)}, rotate = 203.82] [color={rgb, 255:red, 208; green, 2; blue, 27 }  ,draw opacity=1 ][line width=0.75]    (10.93,-3.29) .. controls (6.95,-1.4) and (3.31,-0.3) .. (0,0) .. controls (3.31,0.3) and (6.95,1.4) .. (10.93,3.29)   ;
\draw [color={rgb, 255:red, 74; green, 144; blue, 226 }  ,draw opacity=1 ]   (318.83,45.23) -- (384.83,95.85) ;
\draw [color={rgb, 255:red, 74; green, 144; blue, 226 }  ,draw opacity=1 ]   (408.5,41.73) -- (384.83,95.85) ;
\draw  [draw opacity=0][dash pattern={on 4.5pt off 4.5pt}] (363.97,76.19) .. controls (369.75,69.91) and (378.22,66.27) .. (387.31,67.05) .. controls (403,68.4) and (414.61,82.38) .. (413.24,98.29) -- (384.83,95.85) -- cycle ; \draw  [dash pattern={on 4.5pt off 4.5pt}] (363.97,76.19) .. controls (369.75,69.91) and (378.22,66.27) .. (387.31,67.05) .. controls (403,68.4) and (414.61,82.38) .. (413.24,98.29) ;  
\draw [color={rgb, 255:red, 208; green, 2; blue, 27 }  ,draw opacity=1 ]   (384.83,95.85) -- (384.94,131.08) ;
\draw [shift={(384.95,133.08)}, rotate = 269.82] [color={rgb, 255:red, 208; green, 2; blue, 27 }  ,draw opacity=1 ][line width=0.75]    (10.93,-3.29) .. controls (6.95,-1.4) and (3.31,-0.3) .. (0,0) .. controls (3.31,0.3) and (6.95,1.4) .. (10.93,3.29)   ;
\draw [color={rgb, 255:red, 208; green, 2; blue, 27 }  ,draw opacity=1 ]   (384.83,95.85) -- (362.72,124.64) ;
\draw [shift={(361.5,126.23)}, rotate = 307.53] [color={rgb, 255:red, 208; green, 2; blue, 27 }  ,draw opacity=1 ][line width=0.75]    (10.93,-3.29) .. controls (6.95,-1.4) and (3.31,-0.3) .. (0,0) .. controls (3.31,0.3) and (6.95,1.4) .. (10.93,3.29)   ;
\draw [color={rgb, 255:red, 0; green, 0; blue, 0 }  ,draw opacity=1 ]   (352.5,71.08) -- (384.83,95.85) ;
\draw    (352.5,71.08) .. controls (282.17,17.2) and (418.17,19.38) .. (384.83,95.85) ;
\draw [color={rgb, 255:red, 74; green, 144; blue, 226 }  ,draw opacity=1 ]   (60.5,49.98) -- (104.94,95.85) ;
\draw [color={rgb, 255:red, 74; green, 144; blue, 226 }  ,draw opacity=1 ]   (186.25,41.98) -- (202.03,95.93) ;

\draw (247.56,65.64) node [anchor=north west][inner sep=0.75pt]  [font=\footnotesize] [align=left] {$\nu _{E}^{2}$};
\draw (205.07,135.67) node [anchor=north west][inner sep=0.75pt]  [font=\footnotesize] [align=left] {$\nu _{H}$};
\draw (138.86,52.61) node [anchor=north west][inner sep=0.75pt]  [font=\small] [align=left] {$E$};
\draw (235.52,27.62) node [anchor=north west][inner sep=0.75pt]  [font=\small] [align=left] {$H$};
\draw (44.31,34.12) node [anchor=north west][inner sep=0.75pt]  [font=\footnotesize]  {$\tau _{1}$};
\draw (175.62,24.42) node [anchor=north west][inner sep=0.75pt]  [font=\footnotesize]  {$\tau _{2}$};
\draw (109.9,102.53) node [anchor=north west][inner sep=0.75pt]  [font=\footnotesize]  {$P_{1}$};
\draw (182.56,102.16) node [anchor=north west][inner sep=0.75pt]  [font=\footnotesize]  {$P_{2}$};
\draw (103.46,139.2) node [anchor=north west][inner sep=0.75pt]  [font=\footnotesize] [align=left] {$\nu _{H}$};
\draw (57.37,126.11) node [anchor=north west][inner sep=0.75pt]  [font=\footnotesize] [align=left] {$\nu _{E}^{1}$};
\draw (105.84,76.28) node [anchor=north west][inner sep=0.75pt]  [font=\footnotesize]  {$\gamma _{1}$};
\draw (180.51,77.61) node [anchor=north west][inner sep=0.75pt]  [font=\footnotesize]  {$\gamma _{2}$};
\draw (419.41,110.1) node [anchor=north west][inner sep=0.75pt]  [font=\footnotesize] [align=left] {$\nu _{E}^{2}$};
\draw (354.05,45.41) node [anchor=north west][inner sep=0.75pt]  [font=\small] [align=left] {$E$};
\draw (457.04,26.75) node [anchor=north west][inner sep=0.75pt]  [font=\small] [align=left] {$H$};
\draw (302.83,30.58) node [anchor=north west][inner sep=0.75pt]  [font=\footnotesize]  {$\tau _{1}$};
\draw (407.65,27.05) node [anchor=north west][inner sep=0.75pt]  [font=\footnotesize]  {$\tau _{2}$};
\draw (386.43,102.77) node [anchor=north west][inner sep=0.75pt]  [font=\footnotesize]  {$P_{1}$};
\draw (377.32,135.67) node [anchor=north west][inner sep=0.75pt]  [font=\footnotesize] [align=left] {$\nu _{H}$};
\draw (343.56,122.24) node [anchor=north west][inner sep=0.75pt]  [font=\footnotesize] [align=left] {$\nu _{E}^{1}$};
\draw (343.69,76.41) node [anchor=north west][inner sep=0.75pt]  [font=\footnotesize]  {$\gamma _{2}$};
\draw (414.19,63.74) node [anchor=north west][inner sep=0.75pt]  [font=\footnotesize]  {$\gamma _{1}$};

\end{tikzpicture}
    \caption{Contact sets and contact angles}
    \label{pic contact set}
\end{figure}
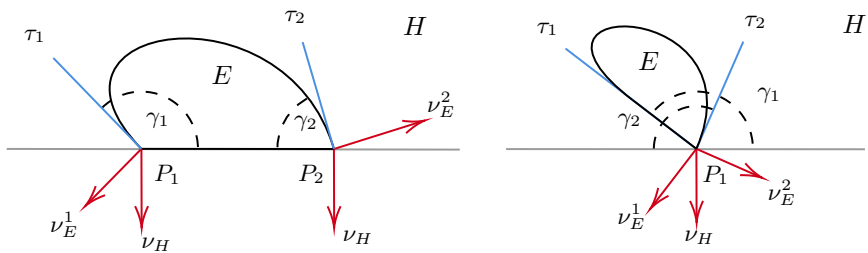

\begin{definition}
    Let $\beta \in (-1,1)$ and $V$ be a $\R$-valued shape functional defined on subsets of $\R^2$, then $J_{\beta}$ is a capillarity shape functional in $H=\{x \in\R^2:x_2 \geq 0\} \subset \R^2$ if its of the form
    \[ J_{\beta}(E) = \Pb(E) + V(E).\]
\end{definition}

\begin{definition}\label{lambdamin}
    A convex body $E \subset H$ is a $\Lambda$-minimizer of $J_{\beta}$ under convexity constraint if for every convex body $F \subset H$ we have
    \[ J_{\beta}(E) \leq J_{\beta}(F) + \Lambda |E \Delta F|. \]
\end{definition}

Notice that the definition of $\Lambda$-minimality is equivalent to asking that for every $F \subset H$ convex body 
\begin{equation}\label{altdeflambdamin} \Pb(E) \leq \Pb(F) + (V(F)- V(E)) + \Lambda |E \Delta F|.\end{equation}
\begin{remark}
 Let us point out that if $E$ is a $\Lambda$-minimizer of $P+V$ and $d(E,\partial H)\gg P(E)$, then it is also a $\Lambda$-minimizer of $J_{\beta}$. We can thus not assume in general that $\partial E\cap \partial H \neq \emptyset$ for $\Lambda-$minimizers of $J_\beta$.
\end{remark}
The results of this section will be proven under a specific assumption on the perturbative term $V$. In particular, we prescribe an upper bound on the term $V(F)-V(E)$ in \eqref{altdeflambdamin} for suitable choices of variations $F$ obtained by cutting-off $E$. These variations are  constructed in \cite{GolNovRuf16}, and require that the point $x\in \partial E$ around which the cut is made and $\eps>0$ satisfy \eqref{intersect2points} below. While this point was slightly overlooked in \cite{GolNovRuf16} we give here sufficient conditions under which it holds.

\begin{lemma}\label{convex circle}
    Let $E \subset \R^2$ be a convex body, and  $x \in \partial E$. Assume that for some $\eps_x,t_x>0$ there exists a coordinate system $\{e,e^\perp\}$ for which $e$ is tangent to $\partial E$ at $x$ and
    \begin{equation}\label{graphprop}
     (\partial E-x)\cap \{ x'e + t e^\perp : \, |x'|\le \eps_x,\, t\le t_x\}=\{ x'e + u(x') e^\perp: \, |x'|\le \eps_x \}
    \end{equation}
    for some convex function $u$. Then, for every $0<\eps \le \min(\eps_x,t_x)$,  we have for some $x_1^{\eps}\neq x_2^{\eps} \in \partial E$,
     \begin{equation}\label{intersect2points} \partial E \cap \partial B_{\eps}(x) = \{x_1^{\eps}, x_2^{\eps}\}. \end{equation}
As a consequence, for all $x\in \partial E$ there exists $\eps_x>0$ such that for all $\eps \leq \eps_x$ \eqref{intersect2points} holds. Moreover, if
\begin{equation}\label{smalloscillation}
 \sup_{y,y'\in \partial E \cap  B_{\eps_x}(x)}\lt|\nu_E(y)-\nu_E(y')\rt|<\sqrt{2},
\end{equation}
then \eqref{intersect2points} holds for $\eps\le \eps_x$.

\end{lemma}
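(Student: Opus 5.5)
The plan is to work in the coordinates $\{e,e^\perp\}$ centered at $x$, in which \eqref{graphprop} describes $\partial E$ near $x$ as the graph of $u$. Since $e$ is tangent to $\partial E$ at $x$, we have $u(0)=0$ and $0\in\partial u(0)$. Hence the convex function $u$ attains its minimum at $0$, so $u\ge 0$, $u$ is nondecreasing on $[0,\eps_x]$ and nonincreasing on $[-\eps_x,0]$. Along the graph I would study
\[ g(x'):=|x'|^2+u(x')^2 = |(x'e+u(x')e^\perp)|^2 . \]
By the monotonicity of $u$ on each side of $0$, $g$ is continuous and strictly increasing on $[0,\eps_x]$ and strictly decreasing on $[-\eps_x,0]$. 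Moreover $g(0)=0$ and $g(\pm\eps_x)\ge \eps_x^2\ge\eps^2$. The intermediate value theorem together with strict monotonicity then gives exactly one $x_1^\eps$ with $x'\in(0,\eps_x]$ and exactly one $x_2^\eps$ with $x'\in[-\eps_x,0)$ on the graph with $|y-x|=\eps$. These two points are distinct, since their abscissae have opposite signs.

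It then remains to exclude points of $\partial E\cap\partial B_\eps(x)$ that lie off the graph. Any $y\in\partial B_\eps(x)$ satisfies $|x'|\le\eps\le\eps_x$ and $t\le \eps\le t_x$, so $y-x$ lies in the set $\{|x'|\le\eps_x,\ t\le t_x\}$. By \eqref{graphprop}, the points of $\partial E-x$ in that set are exactly the graph points, so no other intersections exist, and \eqref{intersect2points} follows.

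For the consequence, for an arbitrary $x\in\partial E$ I would choose $e^\perp$ to be the inner bisector of the normal cone of $E$ at $x$. Then the tangent cone $C_x$, of opening $\gamma\in(0,\pi]$, has its two edges at angle $(\pi-\gamma)/2$ from $e$ and $-e$ respectively. Thus $E$ projects onto a neighbourhood of $0$ on the $e$-axis, and the lower part of $\partial E$ with respect to $e^\perp$ is the graph of a convex $u$ on some $[-\delta,\delta]$. Since $E$ has nonempty interior, the upper part of $\partial E$ stays at some positive height $h$ above $x$ near $x'=0$. Taking $t_x<h$ and then $\eps_x\le\delta$ small enough that $u<t_x$ on $[-\eps_x,\eps_x]$ (by continuity of $u$ and $u(0)=0$) yields \eqref{graphprop}. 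The first part then applies with $\eps\le\min(\eps_x,t_x)$.

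For the last claim, under \eqref{smalloscillation} I would take $e$ tangent at $x$ (if $x$ is a corner, any supporting direction works). All normals $\nu_E(y)$ with $y\in\partial E\cap B_{\eps_x}(x)$ then make an angle strictly less than $\pi/2$ with $\nu_E(x)=-e^\perp$, so $\nu_E(y)\cdot e^\perp<0$. This means $\partial E\cap B_{\eps_x}(x)$ is a Lipschitz graph over the $e$-axis that never becomes vertical, with $E$ lying above it. The step I expect to be the main obstacle is making this rigorous: one has to show that the graph extends across the whole ball, so that \eqref{graphprop} holds with $\eps_x$ and $t_x$ comparable to $\eps_x$. The key point is that the lower boundary cannot leave $B_{\eps_x}(x)$ through its top, and that the upper boundary of $E$ does not enter the relevant region. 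Once that is settled, the monotonicity argument above applies verbatim with $\min(\eps_x,t_x)$ replaced by $\eps_x$, giving \eqref{intersect2points} for all $\eps\le\eps_x$.
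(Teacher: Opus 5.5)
Your first two parts are correct and follow the paper's route. The strictly monotone function $g(x')=x'^2+u(x')^2$ is a clean way to run the paper's intermediate value argument, where the paper compares the graph of $u$ with the upper semicircle instead. Taking $e^\perp$ along the bisector of the tangent cone is exactly the paper's device for the ``consequence'', and you carry it out more carefully through the lower and upper boundary functions.

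The gap is the final claim under \eqref{smalloscillation}. You do not prove it, and the reduction you aim for, namely \eqref{graphprop} on a box of half-width $\eps_x$ with $t_x\sim\eps_x$, is false in general. Take $x=0$, $e=e_1$, and a convex $C^1$ function $u$ with $u=0$ on $(-\infty,0]$, $u'$ increasing from $0$ to some $M>1$ on a tiny interval $[0,\eta]$, and $u'=M$ afterwards. Let $E=\{(x_1,x_2):\ x_1\ge -1,\ u(x_1)\le x_2\le \eps_x\}$ with $\eps_x<1$. Only the lower graph meets the open ball $B_{\eps_x}(0)$, and its normals lie in an arc of length $\arctan M<\pi/2$, so \eqref{smalloscillation} holds for every $M$. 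Yet $E$ projects onto $[-1,b]$ with $b\approx\eps_x/M$. Hence no box of half-width larger than $b$ satisfies \eqref{graphprop}, and your first part only yields $\eps\le b$. In particular, the lower boundary can leave the ball arbitrarily close to $\eps_x e_2$. The paper's one-line claim that \eqref{graphprop} holds with $t_x=\eps_x$ is contradicted by the same example. Moreover, the endpoint $\eps=\eps_x$ (included in the statement and in your plan) must be excluded. For $E=[-1,1]\times[0,\eps_x]$, all normals in $B_{\eps_x}(0)$ equal $-e_2$, but $\partial E\cap\partial B_{\eps_x}(0)=\{(\pm\eps_x,0),(0,\eps_x)\}$. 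So aim for $\eps<\eps_x$, or impose \eqref{smalloscillation} on the closed ball.

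The missing idea is to work in the ball directly rather than in a box. In coordinates centered at $x$ with $e^\perp=-\nu_E(x)$, let $[a,b]$ be the projection of $E-x$ onto $\R e$, and let $u\le w$ be the convex lower and concave upper boundary functions, continuous on $[a,b]$.
\begin{itemize}
\item Points on the graph of $w$ over $(a,b)$ have outer normals with positive $e^\perp$-component.
\item Points of $\partial E$ above $a$ or $b$ have $-e$ or $e$ as an outer normal.
\end{itemize}
Both contradict \eqref{smalloscillation}, which forces $\nu_E\cdot e^\perp<0$ on $\partial E\cap B_{\eps_x}(x)$. Hence $\partial E\cap B_{\eps_x}(x)$ lies on the graph of $u$ over $(a,b)$. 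Applied to the boundary points above $a$ and $b$ at heights $u(a)$ and $u(b)$, the same observation gives $g(a),g(b)\ge\eps_x^2$. Since $u\ge 0=u(0)$, $g$ is strictly monotone on $[a,0]$ and on $[0,b]$. So for every $\eps<\eps_x$ it takes the value $\eps^2$ exactly once on each side, and $\partial B_\eps(x)\subset B_{\eps_x}(x)$ contains no other point of $\partial E$. No control of $u$ on $[-\eps_x,\eps_x]$ is needed.
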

\begin{proof}
    We assume without loss of generality that $x=0$. If $\partial E$ is a graph  of a convex function $u$ over $\R e$ in $\{x'e+ te^\perp: \, |x'|\le \eps_x, t\le t_x\}$, then $E\subset \{ x\cdot e^\perp\ge 0\}$  and $u$ is  minimal at $x'=0$. It is therefore decreasing for $x'<0$ and increasing for $x'>0$. Fix now $\eps\le \min(\eps_x,t_x)$. The set $\partial B_\eps \cap \{x\cdot e^\perp\ge 0\}$ is then the  graph of a concave function which is increasing for $x'<0$ and increasing for $x'>0$. By a simple intermediate value argument we see that \eqref{intersect2points}  indeed holds. Assume now that the tangent cone to $E$ in $0$ is given by
    \[ C(e^\perp,\theta)=\{ x' e+ te^\perp:\, \tan(\theta/2) t\ge |x'|\}, \]
    where $\theta\in(0,\pi]$ and $e^\perp \in \S^1$. By definition of tangent cone, $e$ is tangent to $\partial E$ in $0$ and if $\eps_x$ is small enough,
    \[ C(e^\perp,\theta/2)\cap \{ x'e+ t e^\perp: \, |x'|\le \eps_x, t\le \bar{t}\, \}\subset E. \]
    Here $\bar t$ is a constant which depends on $E$ through its Lipschitz character but not on $x$ so we may assume that $\eps_x\le \bar t$. Since $\min(|x'|/\tan(\theta/4)),\bar t)\in E$ for every $|x'|\le \eps_x$, by convexity of $E$ we see that \eqref{graphprop} holds. Finally if we assume \eqref{smalloscillation} then \eqref{graphprop} also holds with $t_x=\eps_x$.\end{proof}
    
    \begin{remark}
     Let us point out that in general the threshold $\eps_x$ from Lemma \ref{convex circle} is not uniform in $x$. To see this it is enough to consider the case of a cone of opening angle strictly less than $\pi/2$ so that $\eps_x\to 0$ as $x$ converges to the tip of the cone.
    \end{remark}

Let $E \subset \R^2$ be a fixed convex body. In \cite[Section 4]{GolNovRuf16} the first two authors introduced a class of competitors $\cC_E$ obtainable from $E$ by a cutting procedure. We repeat the construction  for the sake of completeness, see Figure \ref{cutting angle} for an example. Let $x \in \partial E$ and $\eps_x>0$ be given by Lemma \ref{convex circle}. For every $\eps \leq \eps_x$ we call $\ell_{x,\eps}$ the line passing through $x_1^{\eps/3}$ and $x_2^{\eps/3}$. If $x \notin \ell_{x,\eps}$ we denote by $H_{x, \eps}$ the closed half-space with boundary $\ell_{x,\eps}$ and not containing $x$. If instead $x \in \ell_{x, \eps}$ then $\ell_{x,\eps}$ is the supporting line of $E$ at $x$ and we denote by $H_{x, \eps}$ the closed half-space with boundary $\ell_{x,\eps}$ and containing $E$. We define
\[ E_{x,\eps}:= E \cap H_{x,\eps}. \]
\begin{figure}
    \centering
    \tikzset{every picture/.style={line width=0.75pt}} 

\begin{tikzpicture}[x=0.75pt,y=0.75pt,yscale=-1,xscale=1]

\draw    (166.42,54.65) -- (244.39,50.3) -- (340.98,56.83) ;
\draw    (79.83,99.33) .. controls (113.5,63) and (131.71,59.55) .. (166.42,54.65) ;
\draw    (340.98,56.83) .. controls (383.8,60.7) and (391,62.3) .. (405.4,86.3) ;
\draw  [draw opacity=0] (177.27,59.29) .. controls (177.57,61.38) and (177.75,63.5) .. (177.82,65.66) -- (117.63,67.63) -- cycle ; \draw  [color={rgb, 255:red, 155; green, 155; blue, 155 }  ,draw opacity=1 ] (177.27,59.29) .. controls (177.57,61.38) and (177.75,63.5) .. (177.82,65.66) ;  
\draw    (66.88,69.13) -- (423.4,59.3) ;
\draw [color={rgb, 255:red, 155; green, 155; blue, 155 }  ,draw opacity=1 ]   (244.39,50.3) -- (372.13,60.63) ;
\draw  [draw opacity=0] (311.92,62.34) .. controls (311.9,61.77) and (311.89,61.2) .. (311.89,60.63) .. controls (311.89,59.07) and (311.95,57.53) .. (312.07,56) -- (372.13,60.63) -- cycle ; \draw  [color={rgb, 255:red, 155; green, 155; blue, 155 }  ,draw opacity=1 ] (311.92,62.34) .. controls (311.9,61.77) and (311.89,61.2) .. (311.89,60.63) .. controls (311.89,59.07) and (311.95,57.53) .. (312.07,56) ;  
\draw [color={rgb, 255:red, 208; green, 2; blue, 27 }  ,draw opacity=1 ]   (79.83,99.33) .. controls (92,85.75) and (107,73.25) .. (117.63,67.63) ;
\draw [color={rgb, 255:red, 208; green, 2; blue, 27 }  ,draw opacity=1 ]   (372.13,60.63) .. controls (393.5,64.75) and (398,75.25) .. (405.4,86.3) ;
\draw [color={rgb, 255:red, 155; green, 155; blue, 155 }  ,draw opacity=1 ]   (244.39,50.3) -- (117.63,67.63) ;
\draw [color={rgb, 255:red, 208; green, 2; blue, 27 }  ,draw opacity=1 ]   (117.63,67.63) -- (372.13,60.63) ;

\draw (400.09,88.33) node [anchor=north west][inner sep=0.75pt]  [font=\footnotesize] [align=left] {$\partial E$};
\draw (181.87,70.37) node [anchor=north west][inner sep=0.75pt]  [font=\footnotesize] [align=left] {$\gamma _{x,\varepsilon }$};
\draw (239.8,36.23) node [anchor=north west][inner sep=0.75pt]  [font=\footnotesize]  {$x$};
\draw (107.1,47.25) node [anchor=north west][inner sep=0.75pt]  [font=\footnotesize]  {$x_{1}^{\varepsilon /3}$};
\draw (367.25,41.57) node [anchor=north west][inner sep=0.75pt]  [font=\footnotesize]  {$x_{2}^{\varepsilon /3}$};
\draw (426.8,52.1) node [anchor=north west][inner sep=0.75pt]  [font=\footnotesize]  {$\ell _{x,\varepsilon }$};
\draw (287.19,72.36) node [anchor=north west][inner sep=0.75pt]  [font=\footnotesize,color={rgb, 255:red, 208; green, 2; blue, 27 }  ,opacity=1 ] [align=left] {$E_{x,\varepsilon }$};

    \end{tikzpicture}
    \caption{Competitor obtained by cutting}
    \label{cutting angle}
\end{figure}
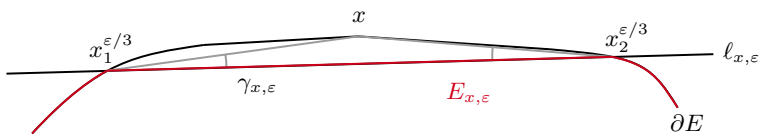
By construction $E_{x,\eps}$ is a convex body and $E_{x,\eps} \subset E$. We hence define the class of sets
\[ \cC_E:=\lt\{ F \subset E: F= E_{x,\eps} \text{ for some } x \in \partial E \text{ and } \eps \leq \eps_x\rt\}.\]
Finally, let $\gamma_{x,\eps}$ be the angle between the segment $[x, x_1^{\eps/3}]$ and $\ell_{x,\eps}$ (notice that it coincides with the angle between $[x, x_2^{\eps/3}]$ and $\ell_{x,\eps}$). From now on, whenever it is clear from the context we drop the dependence of $E_{x,\eps}$ and $\gamma_{x, \eps}$ on $x \in \partial E$ and we just write $E_{\eps}$ and $\gamma_\eps$. We are ready to state the assumption we make on the term $V$ of a capillarity shape functional $J_{\beta}$.  

\begin{assumption}\label{assloclip}
    The functional $V$ is monotone decreasing with respect to set inclusion, namely $V(F) \geq V(E)$ whenever $F \subset E$. In addition, for every convex body $E \subset \R^2$ there exist $q>1$, $s> 1$ and $C>0$ such that for all $E_{x,\eps} \in \cC_E$ we have
    \begin{equation}\label{ineqloclip} V(E_{x,\eps}) - V(E) \leq C \lt( \eps^{q}+ \eps^s\gamma_\eps \rt). \end{equation}
\end{assumption}

\begin{remark}\label{assumption ok}
    The logarithmic energy $\cI_2$ satisfies Assumption \ref{assloclip}, as proved in \cite[Theorem 4.4]{GolNovRuf16}.
    Indeed, the shape functional $\cI_2$ is monotone decreasing with respect to set inclusion by definition. Moreover, by Theorem \ref{convexint} we have  $\mu_E = f_E \llcorner \partial E$ for some $ f_E \in L^p(\partial E)$ with $p>2$. By \textit{Step 3} of the proof of \cite[Theorem 4.4]{GolNovRuf16}, there exists $C>0$ depending on $\| f_E \|_{L^{p}(\partial E)}$ and on the Lipschitz character of $\partial E$ such that for all $E_{x,\eps} \in \cC_E$ relation \eqref{ineqloclip} is satisfied with $C>0$ and $q=s= 2 - 2/p > 1$. \end{remark}

\begin{remark}\label{lamboley prunier}
    A more standard notion of $\Lambda$-minimality would be to require that for $F\subset E$ with $|E\backslash F|\ll 1$ we have
    \begin{equation}\label{hypstrongLambda}
     V(F)-V(E)\le \Lambda |E\backslash F|^\alpha
    \end{equation}
    for some $1/2 < \alpha \leq 1$, see \cite{lamboley2023regularity}. By \textit{Step 1} of the proof of \cite[Theorem 4.4]{GolNovRuf16}, if $F = E_{x,\eps} \in \cC_E$ then $|E \setminus E_{x,\eps}|\lesssim \eps^2 \gamma_\eps$ so that \eqref{hypstrongLambda} implies
    \[ V(F)-V(E)\les \Lambda \eps^{2\alpha} \gamma_\eps^{\alpha}. \]
Therefore \eqref{hypstrongLambda} implies \eqref{ineqloclip}. The motivation to work with \eqref{ineqloclip} is to include the case of $V=\cI_2$. Notice also that, as opposed to \cite{lamboley2023regularity}, here we also need to use variations which are not contained in $E$.
\end{remark}

For $x \in H$, we set $d_{\partial H}(x):= \min\{|x-y|: y \in \partial H\}$. We follow the strategy of \cite[Section 4]{GolNovRuf16} to show how Assumption \ref{assloclip} implies regularity of the boundary of $\Lambda$-minimizers of $J_{\beta}$ away from the contact points.

\begin{theorem}\label{C1,etareg}
    Let $J_{\beta}$ be a capillarity shape functional in $H$ satisfying Assumption \ref{assloclip} and $E \subset H$ be a $\Lambda$-minimizer of $J_{\beta}$ under convexity constraint and set
    \[ \eta:=\min(s-1,(q-1)/2,1) \in (0,1]. \]
    Then, $\partial E \cap \{x_2>0\}$ is of class $C^{1,\eta}_{loc}$.
\end{theorem}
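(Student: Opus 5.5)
The argument is local and follows \cite[Section 4]{GolNovRuf16}. The key observation is that for $x\in\partial E\cap\{x_2>0\}$ and $\eps$ small enough, the cut competitors $E_{x,\eps}\in\cC_E$ stay away from $\partial H$. Indeed, for $\eps<d_{\partial H}(x)$ the removed cap $E\setminus E_{x,\eps}$ lies inside $B_{\eps/3}(x)\subset H^\circ$, so $P(E_{x,\eps},\partial H)=P(E,\partial H)$. Hence the capillarity term only sees the change in the free perimeter: $\Pb(E)-\Pb(E_{x,\eps})=P(E)-P(E_{x,\eps})$. Testing Definition~\ref{lambdamin} with $F=E_{x,\eps}$ (a convex body contained in $E\subset H$) and using \eqref{altdeflambdamin} together with \eqref{ineqloclip}, we get
\[
P(E)-P(E_{x,\eps})\le C\left(\eps^q+\eps^s\gamma_\eps\right)+\Lambda|E\setminus E_{x,\eps}|\lesssim \eps^q+\eps^s\gamma_\eps+\eps^2\gamma_\eps ,
\]
where we used $|E\setminus E_{x,\eps}|\lesssim\eps^2\gamma_\eps$ (Step~1 of \cite[Theorem 4.4]{GolNovRuf16}, see Remark~\ref{lamboley prunier}). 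The constants here are uniform for $x$ in a compact subset of $\partial E\cap\{x_2>0\}$, since $E$ is a fixed Lipschitz convex body.

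The next step is a lower bound for the perimeter loss. Replacing the arc of $\partial E$ between $x_1^{\eps/3}$ and $x_2^{\eps/3}$ by the chord saves at least the amount obtained by replacing the polygonal path $x_1^{\eps/3},x,x_2^{\eps/3}$ by the chord. Since both sides have length $\eps/3$, this saving is $\frac{2\eps}{3}(1-\cos\gamma_\eps)\gtrsim\eps\gamma_\eps^2$. Comparing with the upper bound gives
\[
\eps\gamma_\eps^2\lesssim\eps^q+\eps^s\gamma_\eps+\eps^2\gamma_\eps ,
\]
and therefore $\gamma_\eps\lesssim\eps^{(q-1)/2}+\eps^{s-1}+\eps\lesssim\eps^{\eta}$ with $\eta=\min(s-1,(q-1)/2,1)$. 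The two linear terms are absorbed by dividing by $\eps\gamma_\eps$ when they dominate.

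Finally, $\gamma_\eps$ measures the angle between the two chords from $x$ at scale $\eps/3$, that is, the deviation of the boundary from a line at that scale. First, $\gamma_\eps\to0$ gives that the tangent cone at every $x\in\partial E\cap\{x_2>0\}$ is a half-plane, so $\partial E$ is $C^1$ there. Next, the decay $\gamma_{x,\eps}\lesssim\eps^\eta$, uniform for $x$ in compact subsets, yields a Campanato-type estimate on the oscillation of the normal. Writing $\partial E$ locally as a graph of a convex function $u$, via Lemma~\ref{convex circle}, the slopes of the chords at dyadic scales form a Cauchy sequence with geometric rate $\eps^\eta$. Hence $|u'(a)-u'(b)|\lesssim|a-b|^\eta$, which is the $C^{1,\eta}_{loc}$ regularity.

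I expect the main obstacle to be this last step, namely making the constants uniform in $x$. The threshold $\eps_x$ from Lemma~\ref{convex circle} is not uniform in general, so one must show that it can be taken uniform on compact subsets of $\{x_2>0\}$. The plan is to first establish $C^1$ regularity from the pointwise decay; $C^1$ regularity then gives \eqref{smalloscillation} uniformly on compact sets, and hence a uniform $\eps_x$ for the quantitative estimate.
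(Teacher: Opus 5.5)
Your proposal is correct and follows essentially the paper's proof: the same cut competitors $E_{x,\eps}$ kept away from $\partial H$, the same decay $\gamma_\eps\lesssim\eps^\eta$ (equivalently $|E\setminus E_{x,\eps}|\lesssim\eps^{2+\eta}$), exclusion of corners to get $C^1$, and then making $\eps_x$ uniform on $\{x_2>\delta\}$ via \eqref{smalloscillation}. The only differences are cosmetic: you prove the perimeter lower bound and sketch the final H\"older step by hand, whereas the paper cites Steps 1--2 of \cite[Theorem 4.4]{GolNovRuf16} and \cite[Lemma 4.2]{GolNovRuf16}.
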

\begin{proof}
Fix $x\in \partial E\cap \{x_2>0\}$. Let  $\eps_x>0$ be such that \eqref{graphprop} holds for $\eps\le \eps_x$. For  $\eps \leq \min(\eps_x, d_{\partial H}(x))$, we consider the set $E_{x,\eps} \in \cC_E$ constructed above. We test the $\Lambda$-minimality of $E$ using $E_{x,\eps}$ as a competitor and obtain
    \begin{equation}\label{cutminimality} \Pb(E) - \Pb(E_{x,\eps}) \leq (V(E_{x,\eps})- V(E)) + \Lambda |E \Delta E_{x,\eps}|. \end{equation}
    We now estimate each difference in terms of $\eps$ and $\gamma_{\eps}$. Notice that by construction $E \setminus E_{x,\eps} \subset \{x_2>0\}$, so the capillarity contribution in $\Pb$ plays no role. By \textit{Step 1} and \textit{Step 2} of the proof of \cite[Theorem 4.4]{GolNovRuf16} we have
    \begin{equation}\label{volper} |E \setminus E_{x,\eps}|= |E| - |E_{x,\eps}| \simeq \eps^2\gamma_{\eps} \qquad \text{and} \qquad \Pb(E)- \Pb(E_{x,\eps}) =P(E) - P(E_{x,\eps}) \gtrsim \eps \gamma_{\eps}^2.  \end{equation}
    Therefore \eqref{cutminimality} and Assumption \ref{assloclip} yield
    \[ \eps \gamma_{\eps}^2 \les \eps^{q}+ \gamma_\eps \eps^s + \Lambda  \eps^2 \gamma_\eps. \]
    A disjunction of cases together with \eqref{volper} implies
    \begin{equation}\label{decaygamma}
    |E \setminus E_{x,\eps}| \les (1+\Lambda) \eps^{2+\eta}.
    \end{equation} 
    We now rule out the presence of angles in $\partial E \cap \{x_2>0\}$. This will show that  $\partial E\cap \{x_2>0\}$ is $C^1$. Arguing  by contradiction we  assume that there is $x \in \partial E \cap \{x_2>0\}$ for which the tangent cone $\cup_{\lambda >0} \lambda\lt(E - x\rt)$ spans an angle $\theta \in (0, \pi)$. The presence of the spanned angle $\theta$ implies that
    \begin{equation*}\label{anglecond}  |E \setminus E_{x,\eps}| \gtrsim \eps^2 \theta. \end{equation*}
    For $\eps$ small enough this gives a contradiction with \eqref{decaygamma}. Finally since $\partial E \cap \{x_2>0\}$ is of class $C^1$, for every $\delta>0$ there is $\eps_\delta$ such that \eqref{smalloscillation} holds for $x\in \partial E \cap \{x_2>\delta\}$ and $\eps_x=\eps_\delta$. Since now \eqref{decaygamma} holds uniformly in $x\in \partial E \cap \{x_2>\delta\}$, thanks to \cite[Lemma 4.2]{GolNovRuf16} we conclude that $E \cap \{x_2>\delta\}$ is of class $C^{1,\eta}$. \end{proof}

\begin{remark}\label{C1,1reg}
    If $q\geq 3$ and $s \geq 2$ we obtain from Theorem \ref{C1,etareg} the optimal $C^{1,1}_{loc}$ regularity.
\end{remark}

\begin{remark}\label{uniform estimates}
    Let us notice that if $\{E_k\}_{k \in \N}$ is a sequence of $\Lambda$-minimizers converging in the Hausdorff sense to a convex body $E \subset H$, with $\partial E \cap \{x_2>0\}$ of class $C^1$, then for every $\delta>0$, if $k$ is large enough (depending on $\delta$), then \eqref{smalloscillation} holds for every $x\in \partial E_k \cap \{x_2>\delta\}$  with $\eps_x=\eps_\delta$ uniform.  In turn this implies that $\partial E_k \cap \{x_2>\delta\}$ are uniformly $C^{1,\eta}$. This applies for instance to \cite[Theorem 4.4]{GolNovRuf16} and Theorem \ref{younglog1}.
\end{remark}

We now assume that  $E \cap \partial H  \neq \emptyset$ and investigate whether Young's law $\cos \gamma = \beta$ holds.  We begin by studying the inequality $\cos \gamma \geq \beta$.

\begin{theorem}\label{young2}
    Let $J_{\beta}$ be a capillarity shape functional in $H$ for which the term $V$ is decreasing with respect to set inclusion. Let $E \subset H$ be a $\Lambda$-minimizer of $J_{\beta}$ under convexity constraint, $0$ be a contact point of $E$ with contact angle $\gamma$ and $C_0$ be the tangent cone of $E$ at $0$. Assume that there is $R>0$ such that for all $r \leq R$ we have $E \cap B_r \neq C_0 \cap B_r$, then $\cos \gamma \geq \beta$.
\end{theorem}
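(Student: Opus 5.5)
The plan is to argue by contradiction. Assume $\cos\gamma<\beta$ and build an \emph{exterior} convex competitor $F\supset E$ that enlarges the contact set by a length $t$. Since $V$ is decreasing with respect to inclusion, $V(F)\le V(E)$, so only $\Pb$ and the volume term enter in \eqref{altdeflambdamin}. We will show that $\Pb$ drops at first order in $t$ while $|F\Delta E|=o(t)$.

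\emph{Setup.} Place $E$ so that the contact set is $[0,P_2]$ with $P_2$ on the positive $e_1$-axis (possibly $P_2=0$); the case where $0$ is the right endpoint is symmetric. Then $E\subset C_0$, and $C_0$ is the sector between $\R_+e_1$ and the ray $\R_+(\cos\gamma,\sin\gamma)$. For $t>0$ small let $z_t:=-te_1\in\partial H$ and define
\[ F_t:=\mathrm{conv}(E\cup\{z_t\})\subset H. \]
This is a convex body containing $E$. Let $y_t\in\partial E\cap\{x_2>0\}$ be the point where the supporting line of $F_t$ through $z_t$ (on the upper side) touches $E$. Then:
\begin{itemize}
\item $\partial F_t$ is obtained from $\partial E$ by replacing the arc $\Gamma_t$ of $\partial E$ from $0$ to $y_t$ with the segment $[z_t,y_t]$;
\item the contact set grows by exactly $t$;
\item $F_t\setminus E$ is contained in the triangle with vertices $z_t$, $0$, $y_t$.
\end{itemize}
Hence
\[ \Pb(F_t)-\Pb(E)=|y_t-z_t|-\cH^1(\Gamma_t)-\beta t\le |y_t+te_1|-|y_t|-\beta t,\qquad |F_t\Delta E|\le t|y_t|. \]
By convexity of the norm, $|a+b|-|a|\le b\cdot\frac{a+b}{|a+b|}$. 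With $a=y_t$ and $b=te_1$ this gives
\[ \Pb(F_t)-\Pb(E)\le t(\cos\varphi_t-\beta), \]
where $\varphi_t\in(0,\pi)$ is the angle between $[z_t,y_t]$ and $\partial H$.

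\emph{Key step and main obstacle.} It remains to show that $y_t\to0$ and $\varphi_t\to\gamma$ as $t\to0$; this is exactly where the hypothesis $E\cap B_r\neq C_0\cap B_r$ is used. The supporting lines through $z_t$ converge, as $t\to 0$, to a supporting line of $E$ at $0$ lying on the upper-left side. Since the ray of angle $\gamma$ is the extreme boundary ray of the tangent cone, this limit line is the line $\tau$ through $0$ with direction $(\cos\gamma,\sin\gamma)$, so $\varphi_t\to\gamma$. For $y_t\to0$, fix $r\le R$. By hypothesis $\partial E\cap\{x_2>0\}$ does not coincide with the tangent ray on $B_r$. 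Hence there is a point $w\in\partial E\cap B_r$ lying strictly inside $C_0$, and then, by convexity, the whole portion of $\partial E$ beyond $w$ stays at positive distance $\delta_r$ from $\tau$ outside $B_r$. Since the contact line through $z_t$ is within $o(1)$ of $\tau$ on bounded sets, for $t$ small it cannot touch $E$ outside $B_r$, so $|y_t|<r$. If $\partial E$ contained a segment of the ray, $y_t$ would stay at the far end of that segment and the volume gain would only be $O(t)$; this is the degenerate situation of Lemma \ref{examplenotyoung}, and it is the one the hypothesis excludes.

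\emph{Conclusion.} Plugging $F_t$ into \eqref{altdeflambdamin} and using $V(F_t)\le V(E)$ gives
\[ 0\le \Pb(F_t)-\Pb(E)+\Lambda|F_t\Delta E|\le t\bigl(\cos\varphi_t-\beta+\Lambda|y_t|\bigr). \]
Dividing by $t$ and letting $t\to0$ yields $0\le\cos\gamma-\beta$, contradicting the assumption $\cos\gamma<\beta$. Hence $\cos\gamma\ge\beta$.
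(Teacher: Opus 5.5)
Your argument is correct and is essentially the paper's. The paper's competitor, obtained by extending $E$ below the tangent line at $(x_v,u_v)$ down to $\partial H$, is exactly $\mathrm{conv}(E\cup\{z\})$ for the new contact point $z$; you simply parametrize it by the gained contact length $t$ instead of the slope $v$. You also use the hypothesis the same way, to push the tangency point to $0$ so that the added area is $o(t)$. Your bound $|y_t+te_1|-|y_t|\le t\cos\varphi_t$ is a tidier substitute for the paper's trigonometric bookkeeping and needs no $C^1$ regularity of $u$.

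The one detail to adjust is $\gamma=\pi$ with $P_2\neq 0$. There the full line $\tau$ meets $E$ along $[0,P_2]$, so your positive-distance claim fails as stated. Make it for the ray $\R_+(\cos\gamma,\sin\gamma)$ only, using that the supporting line through $z_t$ lies strictly below $\partial H$ for $x_1>-t$.
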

\begin{proof}
    Let $\R e$ be the tangent line to $\partial E\cap H$ in $0$. Since $E$ does not locally coincide with its tangent cone, up to reflection, there exists a convex function $u: \R^+\to \R^+$ of class $C^1$ with $u(0)=u'(0)=0$ (here $u'(0)$ denotes the right derivative), $u(x)>0$ for $x>0$  and such that
    \[ \partial E\cap H\cap B_R=\{x e+ u(x) e^{\perp}, x\ge 0\}\cap B_R. \]
    From now on we use $(e,e^\perp)$ as basis of $\R^2$. The three cases $\gamma \in (0,\pi/2)$, $\gamma \in (\pi/2,\pi]$ and $\gamma = \pi/2$ are analogous, so we treat them at once pointing out the differences if needed. See Figure \ref{picYounggeq} for the corresponding constructions. If $\gamma \neq \pi/2$ then $\partial H=\{ (x,x\tan \gamma): x \in \R \}$, so that
    \[ \begin{split}
        & \text{if } \gamma \in (0, \pi/2) \qquad E\cap B_R=\{ (x,y): x\tan \gamma\ge y\ge u(x)\} \cap B_R,\\ & \text{if } \gamma \in (\pi/2, \pi] \qquad E\cap B_R=\lt\{ (x,y): y\geq \max(x\tan \gamma, u(x)) \rt\} \cap B_R.
    \end{split}\]
    If instead $\gamma = \pi/2$ then $\partial H = \{(0,y):y \in \R\}$ and hence
    \[ E\cap B_R=\{ (x,y): x\geq 0,\, y \ge u(x)\} \cap B_R. \]
    Since $u'(0)=0$ with $u(x)>0$ for $x>0$, there exists a sequence $v_k, x_{v_k}\to 0$ with $u(x_{v_k})=v_k$. To lighten notation we write $v$ for $v_k$.  We set $u_v:=u(x_v)$ and
    \[l(x):=u_v+v(x - x_v),\]
    so that $\ell := \{(x, l(x)):x \in \R \}$ is the tangent line to $\partial E$ at $(x_v, u_v)$, see Figure \ref{picYounggeq}. If $\gamma \neq \pi/2$ we define $x_0$ by the condition $l(x_0)= x_0 \tan \gamma$, hence the point $(x_0, x_0\tan\gamma)$ is the intersection between $\ell$ and $\partial H$. If instead $\gamma = \pi/2$ then we set $x_0=0$ and here $(0, l(0))$ is the intersection between $\ell$ and $\partial H$. We take as competitor the set $F \subset H$ such that $F \setminus B_R = E \setminus B_R$ and, see Figure \ref{picYounggeq},
    \begin{alignat*}{2} & \text{if } \gamma \in (0, \pi/2) \qquad && F \cap B_R = \lt\{ (x,y): \begin{cases} x\tan \gamma \geq y \geq l & \text{if } x \leq x_v, \\ x\tan \gamma \geq y \geq u & \text{if } x \geq x_v. \end{cases} \rt \} \cap B_R, \\ & \text{if } \gamma = \pi/2 \qquad && F \cap B_R = \lt\{ (x,y): \begin{cases} y \geq l & \text{if } 0 \leq x \leq x_v, \\ y \geq u & \text{if } x \geq x_v. \end{cases} \rt \} \cap B_R, \\ & \text{if } \gamma \in (\pi/2, \pi] \qquad && F \cap B_R = \lt\{ (x,y): \begin{cases} y \geq \max(x\tan \gamma, l) & \text{if } x \leq x_v, \\ y \geq u & \text{if } x \geq x_v. \end{cases} \rt \} \cap B_R. \end{alignat*}
    \begin{figure}
        \centering
        \tikzset{every picture/.style={line width=0.75pt}} 
\begin{tikzpicture}[x=0.75pt,y=0.75pt,yscale=-1,xscale=1]

\draw [color={rgb, 255:red, 155; green, 155; blue, 155 }  ,draw opacity=1 ]   (52.56,194.03) -- (52.48,25.48) ;
\draw [shift={(52.48,23.48)}, rotate = 89.97] [color={rgb, 255:red, 155; green, 155; blue, 155 }  ,draw opacity=1 ][line width=0.75]    (10.93,-3.29) .. controls (6.95,-1.4) and (3.31,-0.3) .. (0,0) .. controls (3.31,0.3) and (6.95,1.4) .. (10.93,3.29)   ;
\draw [color={rgb, 255:red, 155; green, 155; blue, 155 }  ,draw opacity=1 ]   (20.62,137.72) -- (232.24,137.72) ;
\draw [shift={(234.24,137.72)}, rotate = 180] [color={rgb, 255:red, 155; green, 155; blue, 155 }  ,draw opacity=1 ][line width=0.75]    (10.93,-3.29) .. controls (6.95,-1.4) and (3.31,-0.3) .. (0,0) .. controls (3.31,0.3) and (6.95,1.4) .. (10.93,3.29)   ;
\draw    (88.32,35.18) -- (52.1,137.55) ;
\draw [color={rgb, 255:red, 155; green, 155; blue, 155 }  ,draw opacity=1 ]   (296.22,193.75) -- (296.22,22.22) ;
\draw [shift={(296.22,20.22)}, rotate = 90] [color={rgb, 255:red, 155; green, 155; blue, 155 }  ,draw opacity=1 ][line width=0.75]    (10.93,-3.29) .. controls (6.95,-1.4) and (3.31,-0.3) .. (0,0) .. controls (3.31,0.3) and (6.95,1.4) .. (10.93,3.29)   ;
\draw    (296.54,39.53) -- (296.22,173.89) ;
\draw [color={rgb, 255:red, 155; green, 155; blue, 155 }  ,draw opacity=1 ]   (507.71,194.84) -- (507.79,23.59) ;
\draw [shift={(507.79,21.59)}, rotate = 90.03] [color={rgb, 255:red, 155; green, 155; blue, 155 }  ,draw opacity=1 ][line width=0.75]    (10.93,-3.29) .. controls (6.95,-1.4) and (3.31,-0.3) .. (0,0) .. controls (3.31,0.3) and (6.95,1.4) .. (10.93,3.29)   ;
\draw [color={rgb, 255:red, 155; green, 155; blue, 155 }  ,draw opacity=1]   (454.04,137.72) -- (639,137.72) ;
\draw [shift={(641,137.72)}, rotate = 180] [color={rgb, 255:red, 155; green, 155; blue, 155 }  ,draw opacity=1 ][line width=0.75]    (10.93,-3.29) .. controls (6.95,-1.4) and (3.31,-0.3) .. (0,0) .. controls (3.31,0.3) and (6.95,1.4) .. (10.93,3.29)   ;
\draw    (507.91,137.55) .. controls (575.78,137.72) and (604.73,130.92) .. (628.32,73.8) ;
\draw    (463.47,58.3) -- (480.32,88.34) -- (507.91,137.55) ;
\draw [color={rgb, 255:red, 155; green, 155; blue, 155 }  ,draw opacity=0.5 ]   (33.99,188.73) -- (52.1,137.55) ;
\draw [color={rgb, 255:red, 155; green, 155; blue, 155 }  ,draw opacity=0.5 ]   (507.91,137.55) -- (539.56,193.75) ;
\draw  [draw opacity=0][dash pattern={on 4.5pt off 4.5pt}] (60.38,114.07) .. controls (69.88,117.5) and (76.68,126.67) .. (76.73,137.44) -- (52.1,137.55) -- cycle ; \draw  [dash pattern={on 4.5pt off 4.5pt}] (60.38,114.07) .. controls (69.88,117.5) and (76.68,126.67) .. (76.73,137.44) ;  
\draw  [draw opacity=0][dash pattern={on 4.5pt off 4.5pt}] (297.32,113.03) .. controls (309.77,113.49) and (319.82,123.71) .. (320.59,136.53) -- (296.44,138.09) -- cycle ; \draw  [dash pattern={on 4.5pt off 4.5pt}] (297.32,113.03) .. controls (309.77,113.49) and (319.82,123.71) .. (320.59,136.53) ;  
\draw  [draw opacity=0][dash pattern={on 4.5pt off 4.5pt}] (500.76,118.28) .. controls (503.74,117.18) and (507.02,116.75) .. (510.4,117.16) .. controls (520.29,118.34) and (527.7,126.29) .. (528.51,135.78) -- (507.91,137.55) -- cycle ; \draw  [dash pattern={on 4.5pt off 4.5pt}] (500.76,118.28) .. controls (503.74,117.18) and (507.02,116.75) .. (510.4,117.16) .. controls (520.29,118.34) and (527.7,126.29) .. (528.51,135.78) ;  
\draw    (52.1,137.55) .. controls (118.62,137.72) and (197.01,119.5) .. (219.55,73.26) ;
\draw  [dash pattern={on 0.84pt off 2.51pt}]  (171.57,137.45) -- (171.57,115.14) ;
\draw  [dash pattern={on 0.84pt off 2.51pt}]  (394.99,119.77) -- (394.99,137.99) ;
\draw [color={rgb, 255:red, 0; green, 0; blue, 0 }  ,draw opacity=1 ]   (602.66,114.87) -- (532.18,180.96) ;
\draw  [dash pattern={on 0.84pt off 2.51pt}]  (602.66,138.26) -- (602.66,127.93) -- (602.66,114.87) ;
\draw    (296.44,138.09) .. controls (380.12,137.99) and (418.11,116.78) .. (430.77,75.98) ;
\draw [color={rgb, 255:red, 0; green, 0; blue, 0 }  ,draw opacity=1 ]   (394.99,119.77) -- (318.66,160.68) -- (296.22,173.89) ;
\draw    (38.04,177.85) -- (171.57,115.14) ;
\draw    (507.91,137.55) -- (532.18,180.96) ;
\draw    (38.04,177.85) -- (52.1,137.55) ;
\draw [color={rgb, 255:red, 155; green, 155; blue, 155 }  ,draw opacity=1 ]   (250,137.72) -- (438.68,137.72) ;
\draw [shift={(438.68,137.72)}, rotate = 180] [color={rgb, 255:red, 155; green, 155; blue, 155 }  ,draw opacity=1 ][line width=0.75]    (10.93,-3.29) .. controls (6.95,-1.4) and (3.31,-0.3) .. (0,0) .. controls (3.31,0.3) and (6.95,1.4) .. (10.93,3.29)   ;
\draw  [draw opacity=0][dash pattern={on 4.5pt off 4.5pt}] (46.32,154.37) .. controls (52.16,156.49) and (56.99,160.77) .. (59.84,166.26) -- (38.04,177.85) -- cycle ; \draw  [dash pattern={on 4.5pt off 4.5pt}] (46.32,154.37) .. controls (52.16,156.49) and (56.99,160.77) .. (59.84,166.26) ;  
\draw  [dash pattern={on 0.84pt off 2.51pt}]  (38.04,177.85) -- (38.2,138.24) ;
\draw  [draw opacity=0][dash pattern={on 4.5pt off 4.5pt}] (295.8,148.98) .. controls (295.94,148.97) and (296.08,148.97) .. (296.22,148.97) .. controls (305.26,148.97) and (313.16,153.9) .. (317.45,161.24) -- (296.22,173.89) -- cycle ; \draw  [dash pattern={on 4.5pt off 4.5pt}] (295.8,148.98) .. controls (295.94,148.97) and (296.08,148.97) .. (296.22,148.97) .. controls (305.26,148.97) and (313.16,153.9) .. (317.45,161.24) ;  
\draw  [dash pattern={on 0.84pt off 2.51pt}]  (532.18,180.96) -- (532.2,137.84) ;
\draw  [draw opacity=0][dash pattern={on 4.5pt off 4.5pt}] (522.79,162.68) .. controls (525.62,161.24) and (528.83,160.42) .. (532.23,160.42) .. controls (537.93,160.42) and (543.08,162.72) .. (546.81,166.42) -- (532.18,180.96) -- cycle ; \draw  [dash pattern={on 4.5pt off 4.5pt}] (522.79,162.68) .. controls (525.62,161.24) and (528.83,160.42) .. (532.23,160.42) .. controls (537.93,160.42) and (543.08,162.72) .. (546.81,166.42) ;  
\draw [color={rgb, 255:red, 208; green, 2; blue, 27 }  ,draw opacity=1 ]   (52.1,137.55) -- (171.57,115.14) ;
\draw [color={rgb, 255:red, 208; green, 2; blue, 27 }  ,draw opacity=1 ]   (296.44,138.09) -- (394.99,119.77) ;
\draw [color={rgb, 255:red, 208; green, 2; blue, 27 }  ,draw opacity=1 ]   (507.91,137.55) -- (602.66,114.87) ;
\draw [color={rgb, 255:red, 208; green, 2; blue, 27 }  ,draw opacity=1 ]   (33.2,141.06) -- (52.1,137.55) ;
\draw [color={rgb, 255:red, 208; green, 2; blue, 27 }  ,draw opacity=1 ]   (171.57,115.14) -- (199.74,109.85) ;
\draw [color={rgb, 255:red, 208; green, 2; blue, 27 }  ,draw opacity=1 ]   (277.54,141.6) -- (296.44,138.09) ;
\draw [color={rgb, 255:red, 208; green, 2; blue, 27 }  ,draw opacity=1 ]   (394.99,119.77) -- (423.16,114.48) ;
\draw [color={rgb, 255:red, 208; green, 2; blue, 27 }  ,draw opacity=1 ]   (486.02,142.66) -- (507.91,137.55) ;
\draw [color={rgb, 255:red, 208; green, 2; blue, 27 }  ,draw opacity=1 ]   (602.66,114.87) -- (632.33,107.71) ;

\draw (216.85,56.34) node [anchor=north west][inner sep=0.75pt]  [font=\footnotesize]  {$u$};
\draw (429.84,54.05) node [anchor=north west][inner sep=0.75pt]  [font=\footnotesize]  {$u$};
\draw (626.69,58.19) node [anchor=north west][inner sep=0.75pt]  [font=\footnotesize]  {$u$};
\draw (82.58,19.35) node [anchor=north west][inner sep=0.75pt]  [font=\footnotesize]  {$\partial H$};
\draw (266.53,29.4) node [anchor=north west][inner sep=0.75pt]  [font=\footnotesize]  {$\partial H$};
\draw (453.21,41.37) node [anchor=north west][inner sep=0.75pt]  [font=\footnotesize]  {$\partial H$};
\draw (60.84,122.58) node [anchor=north west][inner sep=0.75pt]  [font=\scriptsize]  {$\gamma$};
\draw (301.59,123.22) node [anchor=north west][inner sep=0.75pt]  [font=\scriptsize]  {$\gamma$};
\draw (509.68,123.31) node [anchor=north west][inner sep=0.75pt]  [font=\scriptsize]  {$\gamma$};
\draw (44,160.24) node [anchor=north west][inner sep=0.75pt]  [font=\scriptsize]  {$\alpha_v$};
\draw (38.3,143.14) node [anchor=north west][inner sep=0.75pt]  [font=\scriptsize]  {$r\hspace{-2pt}_v$};
\draw (32,124.6) node [anchor=north west][inner sep=0.75pt]  [font=\scriptsize]  {$x_{0}$};
\draw (167.6,139.4) node [anchor=north west][inner sep=0.75pt]  [font=\scriptsize]  {$x_{v}$};
\draw (282.1,150.54) node [anchor=north west][inner sep=0.75pt]  [font=\scriptsize]  {$r_v$};
\draw (297,155.44) node [anchor=north west][inner sep=0.75pt]  [font=\scriptsize]  {$\alpha_v$};
\draw (527.5,163.4) node [anchor=north west][inner sep=0.75pt]  [font=\scriptsize]  {$\alpha_v$};
\draw (507.6,156.84) node [anchor=north west][inner sep=0.75pt]  [font=\scriptsize]  {$r_v$};
\draw (390.8,139) node [anchor=north west][inner sep=0.75pt]  [font=\scriptsize]  {$x_{v}$};
\draw (598,139) node [anchor=north west][inner sep=0.75pt]  [font=\scriptsize]  {$x_{v}$};
\draw (534,138.2) node [anchor=north west][inner sep=0.75pt]  [font=\scriptsize]  {$x_{0}$};
\draw (204.8,104.2) node [anchor=north west][inner sep=0.75pt]  [font=\scriptsize]  {$\tau$};
\draw (635.6,100.2) node [anchor=north west][inner sep=0.75pt]  [font=\scriptsize]  {$\tau$};
\draw (426.4,107.2) node [anchor=north west][inner sep=0.75pt]  [font=\scriptsize]  {$\tau$};
\draw (346.8,146.84) node [anchor=north west][inner sep=0.75pt]  [font=\scriptsize]  {$\ell $};
\draw (104.4,150.84) node [anchor=north west][inner sep=0.75pt]  [font=\scriptsize]  {$\ell $};
\draw (569.2,148.44) node [anchor=north west][inner sep=0.75pt]  [font=\scriptsize]  {$\ell $};
    \end{tikzpicture}   
    \caption{Contact angle $\gamma \in (0, \pi/2)$, $\gamma = \pi/2$ and $\gamma \in (\pi/2, \pi)$.}
        \label{picYounggeq}
    \end{figure}Notice that $F \supset E$ is a convex set because $u' \geq l'=v$ for $x\geq x_v$. We now introduce more notation. Let $\alpha_v < \gamma$ be the angle between $\partial H$ and $\ell$, so that $\gamma - \alpha_v$ is the acute angle between $\ell$ and $\R e$ and we have 
    \[ v = \tan (\gamma - \alpha_v). \]
    Thanks to the above expression when $\gamma \neq \pi/2$ we write $x_0$ as
    \[ x_0= \frac{u_v -vx_v}{\tan \gamma - v} = \frac{u_v -vx_v}{\tan \gamma - \tan(\gamma - \alpha_v)} = x_v\lt( \frac{u_v}{x_v}-v \rt) \frac{\cos(\gamma - \alpha_v)}{\sin \alpha_v} \cos \gamma. \]
    We now set $r_v=\mathcal{H}^1(\partial H\cap (F\setminus E))$. By construction of $F$ for all $\gamma \in (0, \pi]$ we have
    \[ -x_0= r_v \cos \gamma, \]
    therefore we infer
    \begin{equation}\label{tracediff} r_v= x_v\lt(v -
    \frac{u_v}{x_v}\rt) \frac{\cos(\gamma - \alpha_v)}{\sin \alpha_v}.\end{equation}
    Notice that by construction $r_v \rightarrow 0$ as $v \rightarrow 0$. Finally, let $\tau:=\{(x, t(x)): x \in \R\}$ be the line passing through the origin and $(x_v, u_v)$, so that
    \[ t(x)= \frac{u_v}{x_v} x  \qquad \text{for } x \in \R.  \]
    Since $E \subset F$ we have $|E \Delta F|=|E \setminus F|$ and $V(E) \geq V(F)$ so the $\Lambda$-minimality property of $E$ yields
    \[  P_\beta(E) \le P_\beta(F) + \Lambda |F \setminus E|, \]
    which rewrites as
    \begin{equation}\label{outermin}
        \beta (P(F, \partial H) - P(E, \partial H)) \leq  P(F, H^\circ) - P(E, H^\circ) + \Lambda |F \setminus E|.
    \end{equation}
    We estimate each of the terms separately. First, by definition we have $P(F, \partial H) - P(E, \partial H) = r_v$. Then, we notice that
    \[  P(E, H^\circ) \geq \int_0^{x_v} \lt(1 + (t')^2\rt)^{1/2} = x_v \lt( 1 + \frac{u_v^2}{x_v^2}\rt)^{1/2}. \]
    Hence for all $\gamma \in (0, \pi]$ we find the upper bound
    \[ \begin{split} P(F, H^\circ) - P(E, H^\circ) & \leq (x_v - x_0)\lt(1 +v^2\rt)^{1/2}  - x_v \lt( 1 + \frac{u_v^2}{x_v^2}\rt)^{1/2} \\ & \leq 
     - x_0 \lt(1 +v^2\rt)^{1/2} +  x_v  \lt(v - \frac{u_v}{x_v}\rt)\lt(v + \frac{u_v}{x_v}\rt) \\ & = r_v \cos \gamma (1 + o(1)) +  r_v \lt(v + \frac{u_v}{x_v}\rt) \frac{\sin \alpha_v}{\cos (\gamma - \alpha_v)} \\ & = r_v \cos \gamma (1 + o(1)) + o(r_v),
    \end{split} \]
    where in the last line we used the facts that $\sin \alpha_v/\cos (\gamma - \alpha_v) \rightarrow \sin \gamma$ and $u_v/x_v \rightarrow 0$ as $v \rightarrow 0$. It thus remains to treat the volume term. Since the cases $\gamma \in (0, \pi/2)$, $\gamma = \pi/2$ and $\gamma \in (\pi/2, \pi]$ are analogous we consider only the first one. When $\gamma \in (0, \pi/2)$ we have
    \[ \begin{split} |F \setminus E| & = \int_{x_0}^{x_v} \lt( \min(x\tan \gamma, u(x)) - l(x) \rt)dx \leq  \int_{x_0}^{x_v} (t(x) - l(x))dx \\ & = \int_{x_0}^{x_v} \lt( vx_v - u_v - \lt( v - \frac{u_v}{x_v}\rt)x \rt)\, dx = \int_{x_0}^{x_v}\lt( v - \frac{u_v}{x_v}\rt)(x_v - x)\,dx \\ & =  \frac{1}{2} \lt( v - \frac{u_v}{x_v}\rt)(x_v - x_0)^2 = o(r_v), \end{split}\]
    where the last equality is a consequence of the explicit expression \eqref{tracediff}. Plugging all the estimates back in \eqref{outermin} we find
    \[ \beta r_v \leq r_v \cos \gamma  \lt( 1 + o(1) \rt) + o(r_v) = r_v \cos \gamma  \lt( 1 + o(1) \rt),  \]
    so dividing by $r_v$ and sending $v \rightarrow 0$ we conclude $\beta \leq \cos \gamma$. \end{proof}
\begin{corollary}\label{cor:poslenght}
  Let $J_{\beta}$ be a capillarity shape functional in $H$ satisfying Assumption \ref{assloclip}. Let $E \subset H$ be a $\Lambda$-minimizer of $J_{\beta}$ under convexity constraint such that $\partial E\cap \partial H\neq \emptyset$. Then, we have $\cH^1(\partial E \cap \partial H)>0$.
\end{corollary}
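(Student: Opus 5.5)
The plan is to argue by contradiction. Suppose that $\partial E\cap\partial H=\{0\}$ is a single point, so that $P(E,\partial H)=0$, and let $C_0$ be the tangent cone of $E$ at $0$. Since $E\subset H$, we have $C_0\subset H$, so the angle $\theta\in(0,\pi]$ spanned by $C_0$ satisfies one of two alternatives: either $\theta<\pi$ (a genuine corner at the contact point), or $\theta=\pi$, in which case $C_0=H$ and $\partial E$ is tangent to $\partial H$ at $0$. I will rule out each case with a different competitor: an interior cut in the first case, and the exterior variation of Theorem \ref{young2} in the second.

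\emph{Case $\theta<\pi$.} I will repeat the cutting argument from the proof of Theorem \ref{C1,etareg} at the point $x=0$.
\begin{itemize}
\item[(i)] By Lemma \ref{convex circle} there is $\eps_0>0$ such that \eqref{intersect2points} holds at $0$ for all $\eps\le\eps_0$. Hence $E_{0,\eps}\in\cC_E$ is well defined, and $E_{0,\eps}\subset E\subset H$.
\item[(ii)] Since the contact set of $E$ has zero length and $E_{0,\eps}\subset E$, we get $P(E_{0,\eps},\partial H)=0$. Therefore the capillarity term plays no role and $\Pb(E)-\Pb(E_{0,\eps})=P(E)-P(E_{0,\eps})$.
\item[(iii)] Testing $\Lambda$-minimality with $E_{0,\eps}$, and using \eqref{volper} together with Assumption \ref{assloclip}, gives
\[ \eps\gamma_\eps^2\les \eps^q+\eps^s\gamma_\eps+\Lambda\eps^2\gamma_\eps . \]
As in Theorem \ref{C1,etareg}, this yields the decay \eqref{decaygamma}, namely $|E\setminus E_{0,\eps}|\les(1+\Lambda)\eps^{2+\eta}$.
\item[(iv)] On the other hand, the corner of opening $\theta<\pi$ at $0$ forces $|E\setminus E_{0,\eps}|\gtrsim \eps^2\theta$. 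Letting $\eps\to 0$ contradicts (iii).
\end{itemize}
I expect the only point needing care to be (ii): the estimates \eqref{volper} of \cite{GolNovRuf16} concern the full perimeter, and they transfer to $\Pb$ precisely because the contact term vanishes both for $E$ and for $E_{0,\eps}$.

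\emph{Case $\theta=\pi$.} Here the tangent line to $\partial E\cap H$ at the contact point $0$ is $\partial H$ itself. With the convention of Section \ref{sec5}, the corresponding contact angle is $\gamma=\pi$, so $\cos\gamma=-1$. Moreover, $E\cap B_r\neq C_0\cap B_r=H\cap B_r$ for every $r>0$, since $E$ is bounded. Thus the hypotheses of Theorem \ref{young2} hold; that theorem only needs $V$ to be decreasing under inclusion, which is part of Assumption \ref{assloclip}. It gives $-1=\cos\gamma\ge\beta$, which contradicts $\beta\in(-1,1)$.

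In both cases we reach a contradiction, so $\cH^1(\partial E\cap\partial H)>0$. The main obstacle I foresee is in the case $\theta=\pi$: one must check that the construction in Theorem \ref{young2}, with the exterior competitor $F\supset E$, still makes sense when $\gamma=\pi$ and the contact set is a single point. In that situation $\partial E\cap H$ near $0$ is a $C^1$ graph $u\ge 0$ over $\partial H$ with $u(0)=u'(0)=0$. The nondegeneracy condition $u>0$ for $x>0$ follows on at least one side from the contact set being $\{0\}$, and this is all the proof of Theorem \ref{young2} uses (up to reflection). The computation there then gives $r_v>0$ and the inequality $\beta r_v\le r_v\cos\gamma(1+o(1))$ directly.
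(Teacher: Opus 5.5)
Your proposal is correct and is essentially the paper's proof: the cutting competitors $E_{0,\eps}$ rule out a corner at the single contact point (the paper phrases this as $\partial E$ being $C^1$ there), so the contact angle is $\gamma=\pi$ and Theorem \ref{young2} gives $-1\ge\beta$, a contradiction. One small slip: the non-coincidence $E\cap B_r\neq H\cap B_r$ for small $r$ follows from $E\cap\partial H=\{0\}$, not from boundedness of $E$ (a half-disc is bounded yet coincides with $H$ near the origin).
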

\begin{proof}
We argue by contradiction and assume, up to translation, that $ \partial E\cap \partial H=\{0\}$. Arguing as in the proof of Theorem \ref{C1,etareg}, we obtain that $\partial E$ is $C^1$ (notice that the competitors $E_{0,\eps}$ are all contained in $\{x_2>0\}$). Thus, $\partial H$ is the only support line to $\partial E$ at $0$ and the contact angle of $E$ with $\partial H$ is $\gamma = \pi$. Moreover, for every $r>0$ we have
    \[ E \cap B_r \neq H \cap B_r. \]
    Theorem \ref{young2} implies that $-1= \cos \pi \geq \beta$, but this is a contradiction with $\beta \in (-1, 1)$.
\end{proof}

We now prove that the hypothesis the $E$ does not locally coincide with its tangent cone is necessary to get the inequality $\cos \gamma\ge \beta$.

\begin{lemma}\label{examplenotyoung}
    There exist $\Lambda>0$ and $E \subset H$ such that $E$ is a $\Lambda$-minimizer of $\Pb$ for all $\beta \in (0,1)$ and its contact angle is $\gamma=\pi/2$.
\end{lemma}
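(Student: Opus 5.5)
The plan is to take $E$ to be the unit square $E=[0,1]\times[0,1]\subset H$. Its contact set is $[0,1]\times\{0\}$, both contact angles equal $\pi/2$, and $E$ coincides with its tangent cone near each contact point. Since $\cos(\pi/2)=0<\beta$, the inequality $\cos\gamma\ge\beta$ fails, so it remains to show that $E$ is a $\Lambda$-minimizer of $\Pb$ (here $V\equiv0$) for some $\Lambda$ depending on nothing, say $\Lambda=10$, uniformly in $\beta\in(0,1)$. Concretely, for every convex body $F\subset H$ we must show
\[ 3-\beta=\Pb(E)\le \Pb(F)+\Lambda|E\Delta F| . \]

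\emph{Heuristic.} The point is that the convexity constraint makes it expensive, in volume, to enlarge the contact set. If $F$ extends the base by $r$ to the left, say to $(-r,0)$, then convexity forces $\partial F$ to join $(-r,0)$ to a point close to the corner $(0,1)$. This is because $F$ is $L^1$-close to $E$ and must essentially contain the top corners. Hence $F\setminus E$ contains a triangle of area $\simeq r/2$ and not just $\simeq rh$ for a small $h$. Compare this with the gain $\beta r$ in the wetting term and the perimeter change $\sqrt{1+r^2}-1\ge0$: the area term $\Lambda r/2$ dominates once $\Lambda\ge2\beta$. For a curved boundary, as in the proof of Theorem \ref{young2}, the triangle could instead be made thin, which is exactly what fails here.

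\emph{Large perturbations.} If $|E\Delta F|\ge\delta$, then by \eqref{capestimate2} we have $\Pb(F)\ge\frac{1-\beta}{2}P(F)\ge0$, so the inequality holds as soon as $\Lambda\delta\ge 3$. Uniformity in $\beta$ is not an issue here, since we only use $\Pb(F)\ge0$.

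\emph{Small perturbations.} If $|E\Delta F|<\delta\ll1$, then $F$ is Hausdorff close to $E$ by convexity. Write $F\cap\partial H=[-r_1,1+r_2]\times\{0\}$ with $r_1,r_2\in\R$ small, and let $p_1,p_2\in\partial F$ be points within distance $o(1)$ of $(0,1)$ and $(1,1)$. I would then split into two estimates.

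(i) \emph{Perimeter lower bound.} The relatively open arc $\partial F\cap H^\circ$ joins $(-r_1,0)$ to $p_1$ to $p_2$ to $(1+r_2,0)$. Its length is therefore at least the length of this polygonal path. Projecting onto the vertical and horizontal directions gives
\[ P(F,H^\circ)\ge 3+\min(r_1,0)+\min(r_2,0)-C|E\Delta F| , \]
where I would control the defect from the corners by comparing areas: if $p_i$ sits lower than height $1$ by $s$, then $|E\setminus F|\gtrsim s$.

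(ii) \emph{Volume lower bound.} For $r_i>0$, the triangle with vertices $(-r_1,0)$, $(0,0)$, $p_1$ lies in $F\setminus E$ up to a negligible error, so $|F\setminus E|\ge \frac{r_1}{2}(1-o(1))$, and similarly for $r_2$. For $r_i<0$, the strip $E\setminus F$ near the corresponding side has area $\gtrsim |r_i|$.

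Plugging (i) and (ii) into $\Pb(F)=P(F,H^\circ)-\beta(1+r_1+r_2)$ then gives the claim for $\Lambda\ge 4$, say.

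The main obstacle is step (i): making the perimeter lower bound quantitative for a general convex $F$ near the corners, uniformly in how $F$ rounds them off. I would first try the projection argument directly: the length of a convex arc is at least the sum of its total vertical and horizontal variations. Any loss relative to $3$ is then paid for by area, since a vertical variation short by $s$ forces $|E\setminus F|\gtrsim s$. The $\Lambda$ coming out of this argument should absorb all the constants.
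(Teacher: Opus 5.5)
\textbf{The candidate fails.} The unit square $E=[0,1]^2$ is not a $\Lambda$-minimizer of $\Pb$ for any $\Lambda$. Its top corners $(0,1)$ and $(1,1)$ lie in $\{x_2>0\}$, and Theorem~\ref{C1,etareg} (with $V\equiv0$, for which Assumption~\ref{assloclip} holds trivially) forces $\Lambda$-minimizers to be $C^{1,\eta}$ there.

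A direct check: let $F:=E\cap\{x_1+(1-x_2)\ge a\}$, i.e.\ cut the corner $(0,1)$ along the segment from $(0,1-a)$ to $(a,1)$. Then $F\subset H$ is a convex body with the same contact set, $|E\Delta F|=a^2/2$, and $\Pb(F)=3-\beta-(2-\sqrt2)a$. The required inequality $3-\beta\le\Pb(F)+\Lambda|E\Delta F|$ therefore reduces to $(2-\sqrt2)a\le\Lambda a^2/2$, which fails for every $a<2(2-\sqrt2)/\Lambda$.

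This is exactly where your step (i) breaks. The bound $P(F,H^\circ)\ge 3-C|E\Delta F|$ is false, for two reasons:
\begin{itemize}
\item Lowering a corner by $s$ costs only area of order $s^2$, not $s$.
\item The principle you invoke is backwards. The length of a curve is at most (not at least) the sum of its horizontal and vertical total variations, with equality only for axis-parallel pieces. So rounding off the corners strictly shortens $\partial F\cap H^\circ$.
\end{itemize}
Your ``large perturbations'' step and your wetting heuristic are fine, but they do not rescue a set with interior corners.

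\textbf{What is missing.} You need a candidate that is $C^{1,1}$ with controlled curvature in $\{x_2>0\}$, yet still coincides with its tangent cone near the contact points. The paper takes the half-stadium $E^+=([-1,1]\times[0,L])\cup(B_1^++Le_2)$, whose flat vertical sides meet $\partial H$ at angle $\pi/2$.
\begin{itemize}
\item \emph{Perimeter term.} Reflect across $\partial H$. The doubled stadium $E$ is $C^{1,1}$ with $\sup\kappa_E=1\le P(E)/|E|=\lambda_E$, hence calibrable. Integrating the calibration over $E\setminus F$ and $F\setminus E$ gives $P(E^+,H^\circ)\le P(F^+,H^\circ)+\lambda_E|E^+\Delta F^+|$ for every convex $F^+\subset H$.
\item \emph{Wetting term.} The triangle argument you sketched handles it: enlarging the contact set by $\eps$ costs area $\gtrsim L\eps$, because of the long flat sides, and this beats the gain $\beta\eps$ once $\Lambda\gg\lambda_E$.
\end{itemize}
That part of your plan is sound; replacing the square by a set smooth away from $\partial H$ is what makes the argument close.
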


\begin{proof}
    Let $L,\Lambda \gg 1$ and $B_1^{+}:= \overline B_1 \cap H$. We consider the set (see Figure \ref{epscompetitors})
    \[ E^{+}:= ([-1, 1] \times [0,L]) \cup (B_1^{+} + L e_2) \] and prove that it is a $\Lambda$-minimizer of $\Pb$ under convexity constraint for any $\beta \in (0,1)$.  From now on we fix $\beta \in (0,1)$. We claim that for any convex body $F^+ \subset H$ we have
    \begin{equation}\label{counterex} \Pb(E^+ )\leq  \Pb(F^+) + \Lambda|E^+\Delta F^+|. \end{equation}
    Assume this is not the case, namely there exists a convex body $F^+ \subset H$ such that
    \begin{equation}\label{countcontr} \Pb (F^+) + \Lambda |E^+ \Delta F^+| < \Pb(E^+). \end{equation}
    As a consequence we find $\Lambda |E^+ \Delta F^+| < \Pb(E^+) \simeq L$. We can thus assume that $|E^+ \Delta F^+| \ll 1$ up to taking $\Lambda \gg L$. Let now $\sigma : \R^2 \rightarrow \R^2$ be the symmetry $\sigma (x_1, x_2) = (x_1, -x_2)$ with respect to the line $\partial H$ and $E^-:=\sigma (E^+)$, $F^-:=\sigma (F^+)$. We consider the sets
    \[ E:= E^+ \cup E^-  \qquad \text{ and } F= F^+ \cup F^- \]
    for which there holds
    \begin{equation}\label{doublesize} |E \Delta F| = 2|E^+ \!\Delta F^+|, \quad P(E) = 2 P(E^+, H^\circ) \quad \text{and} \quad P(F) = 2P(F^+, H^\circ).\end{equation}
     The set $E$ is a $C^{1,1}$ convex body with $|E|= 4L + \pi$, $P(E)= 4L + 2\pi$, and
    \[ \sup_{x \in \partial E} \kappa_E = 1 \leq \frac{P(E)}{|E|} =: \lambda_{E}. \]
    The constant $\lambda_E$ is called the Cheeger constant of $E$, see \cite{caselles2007uniqueness}. By \cite[Lemma 3 and Theorem 4]{bellettini2002total} we have that $E$ is calibrable, namely there exists a vector field $T: \R^2 \rightarrow \R^2$ such that 
    \[ \begin{cases}
        \text{div }T = \lambda_{E} & \text{on } E \\
        \text{div }T = 0 & \text{on } \R^2 \setminus E \\ |T(x)| \leq 1 & \text{a.e. } x \in \R^2 \\ T\cdot \nu_{E} = 1 & \text{on } \partial E.
    \end{cases}\]
    We set
    \[
    \{\nu_E =\pm \nu_F\}:= \{x\in \partial E \cap \partial F:\, \nu_E(x)= \pm \nu_F(x)\}.
    \]
    Thanks to the above properties, applying the divergence theorem and using \cite[Theorem 16.3]{maggi2012sets} we find
    \[ \begin{split} \lambda_{E} |E \setminus F| & = \int_{E \setminus F} \text{div } T  = \int_{\partial E \cap F^c} T \cdot \nu_{E} + \int_{\{\nu_E=-\nu_F\}} T \cdot \nu_{E} - \int_{\partial F\cap E^\circ} T \cdot \nu_{F} \\ & \geq P(E, F^c) + \cH^1(\{\nu_E=-\nu_F\}) - P(F, E^\circ) \end{split}\]
    and
    \[ \begin{split} 0  & = \int_{F \setminus E} \text{div } T  = \int_{\partial F \cap E^c} T \cdot \nu_{F} + \int_{\{\nu_E=-\nu_F\}} T \cdot \nu_{F} - \int_{\partial E\cap F^\circ} T \cdot \nu_{E} \\ &  \leq P(F, E^c) + \cH^1(\{\nu_E=-\nu_F\})- P(E,F^\circ). \end{split} \]
    We sum the two inequalities, add the quantity $\cH^1(\{\nu_E=\nu_F\})$ on both sides and we get
    \[ P(E) - P(F) \leq \lambda_{E} |E \setminus F| \leq \lambda_{E} |E \Delta F| \]
    which by \eqref{doublesize} writes as
    \[ P(E^+, H^\circ) \leq P(F^+, H^\circ) +  \lambda_{E} |E^+ \Delta F^+|.\]
    Taking into account the capillarity contribution to $\Pb$ we obtain
    \begin{equation}\label{calib} \Pb(E^+) \leq \Pb(F^+) + \beta \lt( P(F^+ ,\partial H) - P(E^+, \partial H)\rt) + \lambda_{E} |E^+ \Delta F^+|. \end{equation}
    If $\eps := P(F^+ ,\partial H) - P(E^+, \partial H) \leq 0$ then we get a contradiction with \eqref{countcontr} as $\Lambda \gg \lambda_E$. If instead $\eps >0$, by combining \eqref{countcontr} and \eqref{calib} we obtain 
    \[ \Pb (F^+) + \Lambda |E^+ \Delta F^+|  \leq \Pb(F^+) + \beta \eps + \lambda_{E} |E^+ \Delta F^+| \]
    Thanks to the assumption $\Lambda \gg \lambda_E$  we get
    \[ \frac{\Lambda}{2} |E^+ \Delta F^+| < \beta \eps. \]
    We now claim that under our hypotheses we have $|E^+ \Delta F^+| \geq L\eps /8$. This would be enough to conclude, as $\Lambda L \eps < 16 \beta \eps$ implies $\eps = 0$ (by the fact that $\Lambda L \gg  \beta$) which is a contradiction. Thus, it remains to prove the claim. Since $|E^+ \Delta F^+| \ll 1$, there exist $\eps_+, \eps_-$ such that $\eps_++\eps_-= \eps$ and for which
    \[ \partial F^+ \cap \partial H = [-1 - \eps_-, 1 + \eps_+]. \]
    Without loss of generality we assume $\eps_+ > |\eps_-|\geq0$. See Figure \ref{epscompetitors} for two examples, corresponding respectively to $\eps_-<0$ and $\eps_+>0$.
    \begin{figure}
        \centering
        \tikzset{every picture/.style={line width=0.75pt}} 

\begin{tikzpicture}[x=0.75pt,y=0.75pt,yscale=-1,xscale=1]

\draw  [fill={rgb, 255:red, 208; green, 2; blue, 27 }  ,fill opacity=0.25 ] (156.2,151.2) -- (180.4,216.14) -- (156.2,216.14) -- cycle ;
\draw  [draw opacity=0][fill={rgb, 255:red, 229; green, 229; blue, 229 }  ,fill opacity=0.5 ] (84.22,84.86) .. controls (85.33,64.74) and (101.02,48.79) .. (120.22,48.79) .. controls (139.33,48.79) and (154.97,64.61) .. (156.2,84.6) -- (120.22,87.15) -- cycle ; \draw   (84.22,84.86) .. controls (85.33,64.74) and (101.02,48.79) .. (120.22,48.79) .. controls (139.33,48.79) and (154.97,64.61) .. (156.2,84.6) ;  
\draw  [draw opacity=0][fill={rgb, 255:red, 229; green, 229; blue, 229 }  ,fill opacity=0.5 ] (84.22,84.6) -- (156.2,84.6) -- (156.2,216.14) -- (84.22,216.14) -- cycle ;
\draw [color={rgb, 255:red, 155; green, 155; blue, 155 }  ,draw opacity=1 ]   (47.2,216) -- (203.4,216) ;
\draw    (156.2,84.6) -- (156.2,216.6) ;
\draw    (84.2,84.6) -- (84.2,216.6) ;
\draw    (74.2,156.6) .. controls (89.4,22.2) and (116.6,32.2) .. (155,60.6) ;
\draw    (74.2,156.6) -- (96,215.6) ;
\draw [color={rgb, 255:red, 208; green, 2; blue, 27 }  ,draw opacity=1 ]   (156.2,150.6) -- (156.2,216.14) ;
\draw [color={rgb, 255:red, 208; green, 2; blue, 27 }  ,draw opacity=1 ]   (156.2,150.6) -- (180.4,215.8) ;
\draw [color={rgb, 255:red, 208; green, 2; blue, 27 }  ,draw opacity=1 ]   (156.2,216.14) -- (180.4,215.8) ;
\draw    (155,60.6) -- (180.4,215.8) ;
\draw  [draw opacity=0][fill={rgb, 255:red, 229; green, 229; blue, 229 }  ,fill opacity=0.5 ] (258.32,84.9) .. controls (259.43,64.78) and (275.12,48.83) .. (294.32,48.83) .. controls (313.43,48.83) and (329.07,64.64) .. (330.3,84.63) -- (294.32,87.18) -- cycle ; \draw   (258.32,84.9) .. controls (259.43,64.78) and (275.12,48.83) .. (294.32,48.83) .. controls (313.43,48.83) and (329.07,64.64) .. (330.3,84.63) ;  
\draw  [draw opacity=0][fill={rgb, 255:red, 229; green, 229; blue, 229 }  ,fill opacity=0.5 ] (258.32,84.63) -- (330.3,84.63) -- (330.3,216.17) -- (258.32,216.17) -- cycle ;
\draw [color={rgb, 255:red, 155; green, 155; blue, 155 }  ,draw opacity=1 ]   (221.3,216.03) -- (377.5,216.03) ;
\draw    (330.3,84.63) -- (330.3,216.63) ;
\draw    (258.3,84.63) -- (258.3,216.63) ;
\draw    (256.3,70.6) .. controls (267.9,67.4) and (280.3,65.8) .. (294.32,87.18) ;
\draw    (256.3,70.6) -- (249.9,176.2) -- (255.17,215.83) ;
\draw    (294.32,87.18) -- (354.5,215.83) ;
\draw  [color={rgb, 255:red, 208; green, 2; blue, 27 }  ,draw opacity=1 ][fill={rgb, 255:red, 208; green, 2; blue, 27 }  ,fill opacity=0.25 ] (330.3,149.57) -- (306.1,84.63) -- (330.3,84.63) -- cycle ;

\draw (112.8,118.2) node [anchor=north west][inner sep=0.75pt]  [font=\footnotesize]  {$E^{+}$};
\draw (152.4,219.2) node [anchor=north west][inner sep=0.75pt]  [font=\scriptsize]  {$1$};
\draw (74.4,218.8) node [anchor=north west][inner sep=0.75pt]  [font=\scriptsize]  {$-1$};
\draw (68,79.2) node [anchor=north west][inner sep=0.75pt]  [font=\scriptsize]  {$L$};
\draw (142,136.4) node [anchor=north west][inner sep=0.75pt]  [font=\scriptsize]  {$\frac{L}{2}$};
\draw (170.3,219.37) node [anchor=north west][inner sep=0.75pt]  [font=\scriptsize]  {$1+\varepsilon _{+}$};
\draw (286.9,118.23) node [anchor=north west][inner sep=0.75pt]  [font=\footnotesize]  {$E^{+}$};
\draw (326.5,219.23) node [anchor=north west][inner sep=0.75pt]  [font=\scriptsize]  {$1$};
\draw (248.5,218.83) node [anchor=north west][inner sep=0.75pt]  [font=\scriptsize]  {$-1$};
\draw (242.1,79.23) node [anchor=north west][inner sep=0.75pt]  [font=\scriptsize]  {$L$};
\draw (331.7,136.43) node [anchor=north west][inner sep=0.75pt]  [font=\scriptsize]  {$\frac{L}{2}$};
\draw (344.4,219.4) node [anchor=north west][inner sep=0.75pt]  [font=\scriptsize]  {$1+\varepsilon _{+}$};
\draw (152.4,36.2) node [anchor=north west][inner sep=0.75pt]  [font=\footnotesize]  {$F^{+}$};
\draw (298.8,71) node [anchor=north west][inner sep=0.75pt]  [font=\scriptsize]  {$\varepsilon _{+}$};
\draw (346.9,179) node [anchor=north west][inner sep=0.75pt]  [font=\footnotesize]  {$F^{+}$};

    \end{tikzpicture}
    \caption{Two cases.}
    \label{epscompetitors}
    \end{figure}
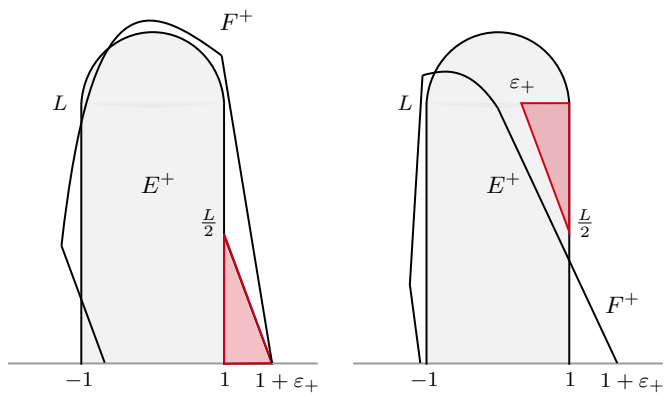
    If the set $F^+ \setminus E^+$ contains a triangle leaning on $\partial H$ with base of length $\eps_+$ and height $L/2$, see again Figure \ref{epscompetitors}, then we have
    \[ \frac{L}{4}\eps_+ \leq |F^+ \setminus E^+| \leq |E^+ \Delta F^+|.\]
    If this is not the case, setting
    \[ x \in \argmin \lt\{ x_2 : x=(x_1, x_2) \in H \cap \partial E^+ \cap \partial F^+, x_1 \geq 0 \rt\} \]
    there holds $x_2 \leq L/2$. By convexity of $F$ we have that $E^+ \setminus F^+$ contains a triangle with base of length $\eps_+$ and height $L/2$ so as before we conclude $L\eps_+/4 \leq |E^+ \Delta F^+|$. Arguing in the same way for $|\eps_-|$ we find $L|\eps_-|/4 \leq |E^+ \Delta F^+|$, thus summing up the two inequalities we obtain
    \[ \frac{L}{4}\eps \leq \frac{L}{4}(\eps_+ + |\eps_-|) \leq 2 |E^+ \Delta F^+| \]
    and the claim is proven.
\end{proof}

Let $E$ be a $\Lambda$-minimizer of $J_\beta$ under convexity constraint with non-empty contact set, we prove the inequality $\cos \gamma \leq \beta$. The proof is similar to that of Theorem \ref{young2} and it involves the construction of competitors from the interior.

\begin{theorem}\label{young1}
     Let $J_{\beta}$ be a capillarity shape functional in $H$ satisfying Assumption \ref{assloclip}. Let $E \subset H$ be a $\Lambda$-minimizer of $J_{\beta}$ under convexity constraint and $0$ be a contact point of $E$ with contact angle $\gamma$, then $\cH^1(\partial E \cap \partial H)>0$ and $\cos \gamma \leq \beta$.
\end{theorem}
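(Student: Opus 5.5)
The positivity $\cH^1(\partial E\cap\partial H)>0$ is exactly Corollary \ref{cor:poslenght}, since $0\in\partial E\cap\partial H$. So the contact set is a nondegenerate segment $[0,P_2]$. Up to reflection, take $P_2=ae_1$ with $a>0$, so $0$ is the left contact point. Near $0$, $\partial E\cap H$ leaves $\partial H$ with tangent direction $(\cos(\pi-\gamma),\sin(\pi-\gamma))$, that is, along the ray making angle $\gamma$ with the contact segment inside $E$. If $\gamma=\pi$ then $\cos\gamma=-1<\beta$ and there is nothing to prove, so assume $\gamma<\pi$. The plan is to argue by contradiction, assuming $\cos\gamma>\beta$, and to build an interior competitor $F\subset E$ by cutting off a small triangle-like region at the corner $0$. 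The cut shortens the contact set by $r$ and replaces the boundary arc near $0$ by a chord.

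For small $r>0$, let $y_r$ be the point of $\partial E\cap H$ at distance of order $r$ from $0$ chosen so that the cut is convenient. Concretely, intersect $\partial E\cap H$ with the line through $re_1$ making angle $\theta$ with $\partial H$, for a fixed $\theta\in(0,\pi-\gamma)$... Simpler: fix a length $\rho$ and let $y_\rho\in\partial E\cap H$ with $|y_\rho|=\rho$. Set $F:=E\cap\{$half-plane bounded by the line through $re_1$ and $y_\rho\}$, with $r=r(\rho)$ of order $\rho$ fixed by a prescribed ratio. Then $F$ is a convex body contained in $E$ and contained in $H$, and convexity is automatic. Because $E$ is convex, the boundary near $0$ is, to first order, the segment from $0$ to $y_\rho$ at angle $\gamma$. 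The removed region is therefore asymptotically the triangle with vertices $0$, $re_1$ and $y_\rho$.

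Next comes the energy bookkeeping. Since $F\subset E$, monotonicity of $V$ gives $V(F)\ge V(E)$, so this term has the wrong sign. This is the main obstacle, and the reason Assumption \ref{assloclip} is needed. I would choose the cut to belong to the class $\cC_E$ centered at $x=0$, or adapt that construction so that it cuts at $x=0$ with the chord through two boundary points $x_1^{\eps/3}\in\partial H$ and $x_2^{\eps/3}\in\partial E\cap H$, as in \cite{GolNovRuf16}. Then \eqref{ineqloclip} gives $V(F)-V(E)\le C(\eps^q+\eps^s\gamma_\eps)=o(\eps)$ because $q,s>1$. The volume term is $|E\setminus F|\lesssim\eps^2=o(\eps)$. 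For the perimeter, the removed arc has length about $\eps/3$ to first order, the contact set shrinks by $\eps/3$, and the new chord has length $\frac{\eps}{3}\cdot 2\sin(\gamma/2)$, where $\gamma$ is the vertex angle at $0$. Hence
\[ \Pb(E)-\Pb(F)=\frac{\eps}{3}\bigl(1-\beta-2\sin(\gamma/2)\bigr)+o(\eps), \]
but this balance is not Young's law, so an isoceles cut is the wrong shape.

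I therefore expect to need asymmetric cuts, keeping the point on $\partial H$ at distance $r$ and the point on the curved side at distance $t$, with $r/t$ free. The gain is then $t+ r\cos\gamma... $ more precisely
\[ \Pb(E)-\Pb(F)=t-\beta r-\sqrt{r^2+t^2-2rt\cos\gamma}+o(r+t). \]
Sending $t/r\to\infty$ gives $\sqrt{\cdot}\approx t-r\cos\gamma$, so the gain is approximately $r(\cos\gamma-\beta)>0$ at order $r$, with $t\gg r$ but $t\to0$. The remaining task is to show that $V(F)-V(E)$ and $\Lambda|E\setminus F|\lesssim rt$ are $o(r)$. The volume term is fine since $t\to0$. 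For $V$, I would verify that the proof of \eqref{ineqloclip} in \cite{GolNovRuf16} (Step 3, using $f_E\in L^p$, $p>2$) adapts to such thin asymmetric cuts, with a bound like $\mu_E$-mass of order $t^{1-1/p}\cdot(\text{opening})$. Alternatively, I would choose $t=r^{\kappa}$ with $\kappa<1$ close to $1$ so that the error is $o(r)$. This adaptation of Assumption \ref{assloclip} to the boundary corner is the step I expect to be delicate. Once it holds, $\Lambda$-minimality gives $r(\cos\gamma-\beta)\le o(r)$, hence $\cos\gamma\le\beta$.
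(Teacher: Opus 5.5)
Your competitor is the right one and is essentially the paper's. There, one cuts with the line through $(\bar x,0)$ (in tangent coordinates) making angle $\alpha>\gamma$ with $\partial H$. This removes a thin region of length $\approx\bar x$ along the curved boundary and $r=\frac{\sin(\alpha-\gamma)}{\sin\alpha}\bar x$ along $\partial H$, and $\alpha\downarrow\gamma$ plays the role of your $t/r\to\infty$.

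\textbf{The gap is the control of $V(F)-V(E)$, which you leave open.} The statement concerns an arbitrary $V$ satisfying Assumption \ref{assloclip}, and that assumption only controls $V$ on the cuts $E_{x,\eps}\in\cC_E$. Re-running Step 3 of \cite{GolNovRuf16} with $f_E\in L^p$ uses the structure of the logarithmic energy, so it is not available for general $V$. Your alternative $t=r^\kappa$ presupposes a bound on $V(F)-V(E)$ for the asymmetric cut that you never derive.

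The missing idea is to combine monotonicity of $V$ with an inclusion. Let $y_t\in\partial E\cap H$ with $|y_t|=t$, let $r\le t$, and let $\eps:=4t\le\eps_0$, where $\eps_0$ is the threshold of Lemma \ref{convex circle} at $x=0$. The segment $[0,re_1]$ and the arc of $\partial E$ from $0$ to $y_t$ lie in $B_{\eps/3}$. By Lemma \ref{convex circle}, they therefore lie inside the arc of $\partial E$ through $0$ with endpoints $x_1^{\eps/3},x_2^{\eps/3}$. Hence $E\setminus F$, which is contained in the convex hull of these two pieces, lies on the side of $\ell_{0,\eps}$ containing $0$, i.e.\ $E_{0,\eps}\subset F\subset E$. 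Since $V$ is decreasing,
$V(F)-V(E)\le V(E_{0,\eps})-V(E)\le C(\eps^q+\eps^s\gamma_\eps)\lesssim t^{\min(q,s)}$.
This is exactly how the paper treats this term. No adaptation to the corner is needed, because $\cC_E$ contains cuts centred at every point of $\partial E$, including the contact point $0$.

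With this bound, your choice $t=r^\kappa$ works for $\kappa<1$ close enough to $1$ that $\kappa\min(q,s)>1$ and $2\kappa>1$. Alternatively, as in the paper, fix $\lambda=r/t$, let $t\to0$, and only then let $\lambda\to0$.

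You should also repair the perimeter bookkeeping. An error $o(r+t)=o(t)$ is too crude when $t\gg r$, because the gain is only of order $r$. Either use the fixed-ratio order of limits, or use convexity to get an exact one-sided bound. The removed arc has length at least $|y_t|=t$. Since $y_t$ lies in the tangent cone at $0$, $|y_t-re_1|^2\le r^2+t^2-2rt\cos\gamma$. Hence, for $r\le t/2$,
$\Pb(E)-\Pb(F)\ge t-\beta r-\sqrt{r^2+t^2-2rt\cos\gamma}\ge r(\cos\gamma-\beta)-Cr^2/t$.
Together with $|E\setminus F|\le|B_t|\lesssim t^2=o(r)$ and $\Lambda$-minimality, dividing by $r$ and letting $r/t\to0$ gives $\cos\gamma\le\beta$.
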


\begin{proof}
    The first part of the statement follows from Corollary \ref{cor:poslenght}. We thus focus on the inequality $\cos \gamma \leq \beta$. Without loss of generality we assume $\gamma < \pi$, otherwise there is nothing to prove. Let $\R e$ be the tangent line to $\partial E\cap H$ in $0$. Up to reflection, there exist $R>0$ and a convex function $u: \R^+\to \R^+$  with $u(0)=u'(0)=0$ (here $u'(0)$ denotes the right derivative) such that
    \[ \partial E\cap H\cap B_R=\{x e+ u(x) e^{\perp}, x\ge 0\}\cap B_R. \]
    From now on we use $(e,e^\perp)$ as basis of $\R^2$ and we set $u(x)=0$ when $x \leq 0$. We distinguish the three cases $\gamma \in (0,\pi/2)$, $\gamma \in (\pi/2,\pi)$ and $\gamma = \pi/2$, see Figure \ref{picYoungleq}. We focus first on $\gamma \neq \pi/2$. In these two cases we have $\partial H=\{ (x,x\tan \gamma): x \in \R \}$, so that
        \[ \begin{split}
        & \text{if } \gamma \in (0, \pi/2) \qquad E\cap B_R=\{ (x,y): x\tan \gamma\ge y\ge u(x)\} \cap B_R,\\ & \text{if } \gamma \in (\pi/2, \pi) \qquad E\cap B_R=\lt\{ (x,y): y\geq \max(x\tan \gamma, u(x)) \rt\} \cap B_R.
    \end{split}\]
    Let $\alpha > \gamma$ be fixed for the moment. For $\bar x>0$ we set
    \begin{equation}\label{cutting line}
        l(x):=\tan(\alpha-\gamma)(\bar x-x),
    \end{equation}
    so the line $\ell := \{(x, l(x)): x \in \R \}$ passes through the point $(\bar x,0)$ and makes an angle $\alpha$ with $\partial H$. We consider as a competitor the set $F \subset H$ such that $F \setminus B_R=E \setminus B_R $ and
    \[ \begin{split}
        & \text{if } \gamma \in (0, \pi/2) \qquad F \cap B_R = \lt\{ (x,y): x\tan \gamma\ge y\ge \max (u(x), l(x)) \rt\}\cap B_R, \\ & \text{if } \gamma \in (\pi/2, \pi) \qquad F\cap B_R=\lt\{ (x,y): y\geq \max(x\tan \gamma, u(x), l(x)) \rt\} \cap B_R.
    \end{split}\]
    By definition $F \subset E$ and $F$ is convex. We now introduce some more notation. Let $x_0$ be such that $x_0 \tan \gamma = l(x_0)$, so the point $(x_0, x_0 \tan \gamma)$ is the intersection between $\ell$ and $\partial H$. By definition \eqref{cutting line} of $l$ we have
    \begin{equation}\label{point x_0} x_0= \frac{ \tan (\alpha - \gamma)}{\tan \gamma + \tan (\alpha -\gamma)} \bar x = \frac{\sin(\alpha-\gamma)\cos \gamma}{\sin \alpha} \bar x. \end{equation}
    We set $r=\mathcal{H}^1(\partial H\cap (E\setminus F))$. From the relation $x_0= r \cos \gamma$ we infer
    \[ r= \frac{\sin(\alpha-\gamma)}{\sin \alpha} \bar x.\]
    We finally define $x_1$ by the condition $u(x_1)=l(x_1)$. By construction $u(x) = o(x)$ as $x \rightarrow 0$ so we have
    \begin{equation}\label{point x_1} x_1 = \bar x + o_{\alpha}(\bar x ) \qquad \text{as } \bar x \rightarrow 0, \end{equation}
    where we use the notation $o_{\alpha}(\bar x)$, $O_{\alpha}(\bar x)$ to highlight the dependence on $\alpha$.
    \begin{figure}
        \centering
\tikzset{every picture/.style={line width=0.75pt}} 

\begin{tikzpicture}[x=0.75pt,y=0.75pt,yscale=-1,xscale=1]

\draw [color={rgb, 255:red, 155; green, 155; blue, 155 }  ,draw opacity=1 ]   (47.87,184.14) -- (47.87,21.14) ;
\draw [shift={(47.87,19.14)}, rotate = 90] [color={rgb, 255:red, 155; green, 155; blue, 155 }  ,draw opacity=1 ][line width=0.75]    (10.93,-3.29) .. controls (6.95,-1.4) and (3.31,-0.3) .. (0,0) .. controls (3.31,0.3) and (6.95,1.4) .. (10.93,3.29)   ;
\draw [color={rgb, 255:red, 155; green, 155; blue, 155 }  ,draw opacity=1 ]   (20.75,155.89) -- (203.5,155.89) ;
\draw [shift={(205.5,155.89)}, rotate = 180] [color={rgb, 255:red, 155; green, 155; blue, 155 }  ,draw opacity=1 ][line width=0.75]    (10.93,-3.29) .. controls (6.95,-1.4) and (3.31,-0.3) .. (0,0) .. controls (3.31,0.3) and (6.95,1.4) .. (10.93,3.29)   ;
\draw    (151.66,32.14) -- (63.17,137.61) -- (47.97,155.73) ;
\draw    (29.29,92.03) -- (181.53,173.4) ;
\draw [color={rgb, 255:red, 155; green, 155; blue, 155 }  ,draw opacity=1 ]   (248.87,184.14) -- (248.87,21.14) ;
\draw [shift={(248.87,19.14)}, rotate = 90] [color={rgb, 255:red, 155; green, 155; blue, 155 }  ,draw opacity=1 ][line width=0.75]    (10.93,-3.29) .. controls (6.95,-1.4) and (3.31,-0.3) .. (0,0) .. controls (3.31,0.3) and (6.95,1.4) .. (10.93,3.29)   ;
\draw [color={rgb, 255:red, 155; green, 155; blue, 155 }  ,draw opacity=1 ]   (221.25,155.89) -- (404.5,155.89) ;
\draw [shift={(406.5,155.89)}, rotate = 180] [color={rgb, 255:red, 155; green, 155; blue, 155 }  ,draw opacity=1 ][line width=0.75]    (10.93,-3.29) .. controls (6.95,-1.4) and (3.31,-0.3) .. (0,0) .. controls (3.31,0.3) and (6.95,1.4) .. (10.93,3.29)   ;
\draw    (249,40.39) -- (248.97,156.23) ;
\draw    (230.29,92.03) -- (382.53,173.4) ;
\draw [color={rgb, 255:red, 155; green, 155; blue, 155 }  ,draw opacity=1 ]   (486.37,184.14) -- (486.37,21.14) ;
\draw [shift={(486.37,19.14)}, rotate = 90] [color={rgb, 255:red, 155; green, 155; blue, 155 }  ,draw opacity=1 ][line width=0.75]    (10.93,-3.29) .. controls (6.95,-1.4) and (3.31,-0.3) .. (0,0) .. controls (3.31,0.3) and (6.95,1.4) .. (10.93,3.29)   ;
\draw [color={rgb, 255:red, 155; green, 155; blue, 155 }  ,draw opacity=1 ]   (421,155.89) -- (642,155.89) ;
\draw [shift={(644,155.89)}, rotate = 180] [color={rgb, 255:red, 155; green, 155; blue, 155 }  ,draw opacity=1 ][line width=0.75]    (10.93,-3.29) .. controls (6.95,-1.4) and (3.31,-0.3) .. (0,0) .. controls (3.31,0.3) and (6.95,1.4) .. (10.93,3.29)   ;
\draw    (486.47,155.73) .. controls (544.01,155.89) and (573.38,155.32) .. (621.92,130.46) ;
\draw    (428.5,47.39) -- (486.47,155.73) ;
\draw    (432.7,98.39) -- (623,168.04) ;
\draw  [dash pattern={on 0.84pt off 2.51pt}]  (28.35,179.02) -- (47.97,155.73) ;
\draw  [dash pattern={on 0.84pt off 2.51pt}]  (486.47,155.73) -- (501.35,183.42) ;
\draw  [dash pattern={on 0.84pt off 2.51pt}]  (248.95,178.02) -- (248.97,156.23) ;
\draw  [draw opacity=0][dash pattern={on 4.5pt off 4.5pt}] (65.78,137.75) .. controls (70.99,142.73) and (74.02,149.84) .. (73.48,157.51) .. controls (73.48,157.6) and (73.47,157.69) .. (73.46,157.77) -- (47.97,155.73) -- cycle ; \draw  [dash pattern={on 4.5pt off 4.5pt}] (65.78,137.75) .. controls (70.99,142.73) and (74.02,149.84) .. (73.48,157.51) .. controls (73.48,157.6) and (73.47,157.69) .. (73.46,157.77) ;  
\draw    (248.97,156.23) .. controls (306.51,156.39) and (335.88,155.82) .. (384.42,130.96) ;
\draw  [draw opacity=0][dash pattern={on 4.5pt off 4.5pt}] (451.95,90.03) .. controls (454.85,88.51) and (458.14,87.64) .. (461.64,87.64) .. controls (473.22,87.64) and (482.6,97.12) .. (482.6,108.81) .. controls (482.6,111.28) and (482.18,113.66) .. (481.41,115.86) -- (461.64,108.81) -- cycle ; \draw  [dash pattern={on 4.5pt off 4.5pt}] (451.95,90.03) .. controls (454.85,88.51) and (458.14,87.64) .. (461.64,87.64) .. controls (473.22,87.64) and (482.6,97.12) .. (482.6,108.81) .. controls (482.6,111.28) and (482.18,113.66) .. (481.41,115.86) ;  
\draw  [draw opacity=0][dash pattern={on 4.5pt off 4.5pt}] (97.14,101.1) .. controls (102.3,106.13) and (105.27,113.26) .. (104.67,120.93) .. controls (104.34,125.08) and (103,128.91) .. (100.91,132.21) -- (79.17,118.93) -- cycle ; \draw  [dash pattern={on 4.5pt off 4.5pt}] (97.14,101.1) .. controls (102.3,106.13) and (105.27,113.26) .. (104.67,120.93) .. controls (104.34,125.08) and (103,128.91) .. (100.91,132.21) ;  
\draw  [draw opacity=0][dash pattern={on 4.5pt off 4.5pt}] (249.51,131.12) .. controls (263.37,131.4) and (274.53,142.52) .. (274.54,156.2) .. controls (274.54,156.29) and (274.54,156.38) .. (274.54,156.47) -- (248.97,156.23) -- cycle ; \draw  [dash pattern={on 4.5pt off 4.5pt}] (249.51,131.12) .. controls (263.37,131.4) and (274.53,142.52) .. (274.54,156.2) .. controls (274.54,156.29) and (274.54,156.38) .. (274.54,156.47) ;  
\draw  [draw opacity=0][dash pattern={on 4.5pt off 4.5pt}] (249.41,76.53) .. controls (263.27,76.81) and (274.43,87.93) .. (274.44,101.61) .. controls (274.45,105.95) and (273.33,110.04) .. (271.36,113.6) -- (248.87,101.64) -- cycle ; \draw  [dash pattern={on 4.5pt off 4.5pt}] (249.41,76.53) .. controls (263.27,76.81) and (274.43,87.93) .. (274.44,101.61) .. controls (274.45,105.95) and (273.33,110.04) .. (271.36,113.6) ;  
\draw  [draw opacity=0][dash pattern={on 4.5pt off 4.5pt}] (477.05,136.82) .. controls (479.97,135.33) and (483.28,134.51) .. (486.77,134.56) .. controls (498.32,134.72) and (507.55,144.29) .. (507.43,155.94) -- (486.47,155.73) -- cycle ; \draw  [dash pattern={on 4.5pt off 4.5pt}] (477.05,136.82) .. controls (479.97,135.33) and (483.28,134.51) .. (486.77,134.56) .. controls (498.32,134.72) and (507.55,144.29) .. (507.43,155.94) ;  
\draw    (47.97,155.73) .. controls (105.51,155.89) and (134.88,155.32) .. (183.42,130.46) ;
\draw  [dash pattern={on 0.84pt off 2.51pt}]  (79.17,156.13) -- (79.17,118.93) ;
\draw  [dash pattern={on 0.84pt off 2.51pt}]  (135.8,154.84) -- (135.8,146.44) ;
\draw  [dash pattern={on 0.84pt off 2.51pt}]  (461.6,156.04) -- (461.64,108.81) ;
\draw  [dash pattern={on 0.84pt off 2.51pt}]  (572.6,156.04) -- (572.6,153.24) -- (572.6,148.44) ;
\draw  [dash pattern={on 0.84pt off 2.51pt}]  (337.4,156.04) -- (337.4,147.64) ;

\draw (188.2,118.7) node [anchor=north west][inner sep=0.75pt]  [font=\footnotesize]  {$u$};
\draw (388.6,119.1) node [anchor=north west][inner sep=0.75pt]  [font=\footnotesize]  {$u$};
\draw (625.8,117.9) node [anchor=north west][inner sep=0.75pt]  [font=\footnotesize]  {$u$};
\draw (153.6,19.2) node [anchor=north west][inner sep=0.75pt]  [font=\footnotesize]  {$\partial H$};
\draw (225.4,32.44) node [anchor=north west][inner sep=0.75pt]  [font=\footnotesize]  {$\partial H$};
\draw (413.2,32.44) node [anchor=north west][inner sep=0.75pt]  [font=\footnotesize]  {$\partial H$};
\draw (185.2,170.84) node [anchor=north west][inner sep=0.75pt]  [font=\footnotesize]  {$\ell $};
\draw (386.4,171.64) node [anchor=north west][inner sep=0.75pt]  [font=\footnotesize]  {$\ell $};
\draw (626.8,166.44) node [anchor=north west][inner sep=0.75pt]  [font=\footnotesize]  {$\ell $};
\draw (60,142.2) node [anchor=north west][inner sep=0.75pt]  [font=\scriptsize]  {$\gamma $};
\draw (256.4,140.6) node [anchor=north west][inner sep=0.75pt]  [font=\scriptsize]  {$\gamma $};
\draw (488.8,142.4) node [anchor=north west][inner sep=0.75pt]  [font=\scriptsize]  {$\gamma $};
\draw (90.4,109.8) node [anchor=north west][inner sep=0.75pt]  [font=\scriptsize]  {$\alpha $};
\draw (256,88.6) node [anchor=north west][inner sep=0.75pt]  [font=\scriptsize]  {$\alpha $};
\draw (463.2,95.4) node [anchor=north west][inner sep=0.75pt]  [font=\scriptsize]  {$\alpha $};
\draw (585.4,158.6) node [anchor=north west][inner sep=0.75pt]  [font=\scriptsize]  {$\overline{x}$};
\draw (58.4,125.4) node [anchor=north west][inner sep=0.75pt]  [font=\scriptsize]  {$r$};
\draw (238.4,118.6) node [anchor=north west][inner sep=0.75pt]  [font=\scriptsize]  {$r$};
\draw (465.8,132) node [anchor=north west][inner sep=0.75pt]  [font=\scriptsize]  {$r$};
\draw (344.6,158.6) node [anchor=north west][inner sep=0.75pt]  [font=\scriptsize]  {$\overline{x}$};
\draw (144.8,158.6) node [anchor=north west][inner sep=0.75pt]  [font=\scriptsize]  {$\overline{x}$};
\draw (75.2,157.8) node [anchor=north west][inner sep=0.75pt]  [font=\scriptsize]  {$x_{0}$};
\draw (456.8,158.6) node [anchor=north west][inner sep=0.75pt]  [font=\scriptsize]  {$x_{0}$};
\draw (568,157.8) node [anchor=north west][inner sep=0.75pt]  [font=\scriptsize]  {$x_{1}$};
\draw (331.2,157) node [anchor=north west][inner sep=0.75pt]  [font=\scriptsize]  {$x_{1}$};
\draw (131.2,157.4) node [anchor=north west][inner sep=0.75pt]  [font=\scriptsize]  {$x_{1}$};
\end{tikzpicture}

        \caption{Contact angle $\gamma \in (0, \pi/2)$, $\gamma = \pi/2$ and $\gamma \in (\pi/2, \pi)$.}
        \label{picYoungleq}
    \end{figure}
    By $\Lambda$-minimality of $E$ under convexity constrain we obtain
    \[  P_\beta(E)+ V(E)\le P_\beta(F)+ V(F) + \Lambda |E \setminus F|, \]
    which writes as
    \begin{equation}\label{innermin} P(E, H^\circ) - P(F, H^\circ) \leq \beta \lt( P(E, \partial H) - P(F, \partial H )\rt) + (V(F) - V(E)) + \Lambda |E \setminus F|. \end{equation}
    We now estimate each of the terms separately. We first notice that since $u'(x)=o(1)$ as $x \rightarrow 0$, a Taylor expansion and \eqref{point x_1} yield
    \[ \int_0^{x_1} \lt(1 + (u')^2 \rt)^{1/2} = x_1 + o(x_1) = \bar x + o_{\alpha}(\bar x) \qquad \text{as } \bar x \rightarrow 0.  \]
    Hence, recalling that $l'=\tan(\alpha - \gamma)$ and using the explicit formula \eqref{point x_0}, we can compute
    \[ \begin{split}
        P(E, H^\circ) - P(F, H^\circ) & = \int_0^{x_1} \lt(1 + (u')^2 \rt)^{1/2} - \int_{x_0}^{x_1} \lt(1 + (l')^2 \rt)^{1/2} \\ & = \bar x + o_{\alpha}(\bar x) - (\bar x + o_{\alpha}(\bar x) - x_0) \lt( 1+ \tan^2(\alpha - \gamma)\rt)^{1/2} \\ & = \bar x \lt(1- \lt(1- \frac{\sin(\alpha-\gamma)\cos \gamma}{\sin \alpha} \rt)\lt( 1+ \tan^2(\alpha - \gamma)\rt)^{1/2} \rt) + o_{\alpha}(\bar x).
    \end{split}\]
    By definition, we also have
    \[P(E, \partial H) - P(F, \partial H ) = r= \frac{\sin(\alpha-\gamma)}{\sin \alpha} \bar x. \]
    We estimate the volume difference $|E \setminus F|$. In the case $\gamma \in (0, \pi/2)$ we have
    \[ |E \setminus F|= \int_0^{x_0} (x\tan \gamma-u) + \int_{x_0}^{x_1} (l-u) \leq \int_0^{x_0} x\tan \gamma + \int_{x_0}^{\bar x} l = O_\alpha(\bar x^2) = o_\alpha(\bar x).   \]
    If instead $\gamma \in (\pi/2, \pi)$ we get the same result by computing
    \[ |E \setminus F| = \int_{x_0}^0 (l- x \tan \gamma) + \int_0^{x_1} (l - u) \leq \int_{x_0}^{\bar x} l = O_{\alpha}(\bar x^2)= o_{\alpha}(\bar x). \]
    It remains to treat the term $V(F) - V(E)$. We assume that $R$ is small enough so that for all $t < R$ (and hence for $\bar x$) we have $\partial E \cap B_t = \{y_1^t, y_2^t\}$. We take $t= \bar x$ and we also assume $\alpha$ close enough to $\gamma$ so that $E \setminus F \subset B_t$. In this way we obtain $E_{0,t}  \subset F$, where $E_{0, t} \in \cC_E$ denotes the set obtained from $E$ via the cutting procedure described above. Thus, Assumption \ref{assloclip} yields
    \[ V(F) - V(E) \leq V(E_{0, \bar x}) - V(E) \lesssim  t^{\min(q,s)} = o(\bar x).\]
    Plugging all the estimates back in \eqref{innermin} we find 
    \[ \bar x\lt( 1-\lt(1-\frac{\sin(\alpha-\gamma)\cos \gamma}{\sin \alpha}\rt)\lt( 1+ \tan^2(\alpha - \gamma)\rt)^{1/2} \rt) \leq \frac{\sin(\alpha-\gamma)}{\sin \alpha} \beta \bar x + o(\bar x) +o_\alpha(\bar x).\]
    Dividing by $\bar x$ and sending $\bar x \rightarrow 0$ we obtain
    \[ 1-\lt(1-\frac{\sin(\alpha-\gamma)\cos \gamma}{\sin \alpha}\rt)\lt( 1+ \tan^2(\alpha - \gamma)\rt)^{1/2} 
    \leq \frac{\sin(\alpha-\gamma)}{\sin \alpha}\beta. \]
    We now use a Taylor expansion at $\alpha = \gamma$ to infer
    \[ 1-\lt(1-(\alpha-\gamma)\frac{\cos \gamma}{\sin \gamma} + o(\alpha - \gamma)\rt)\lt( 1+ o(\alpha - \gamma)\rt) 
    \leq \frac{(\alpha-\gamma)}{\sin \gamma}\beta + o(\alpha - \gamma), \]
    from which we get
    \[ \frac{(\alpha-\gamma)}{\sin \gamma} \cos \gamma \leq \frac{(\alpha-\gamma)}{\sin \gamma} \beta + o(\alpha - \gamma). \]
    Dividing by $(\alpha - \gamma)/\sin \gamma$ (recall that $\sin \gamma >0$) and sending $\alpha \rightarrow \gamma$ we conclude $\cos \gamma \leq \beta$ when $\gamma \neq \pi/2$. The case $\gamma = \pi/2$ is similar and we only point out the main differences. Here we have $\partial H = \{(0, y): y \in \R \}$ and the set $E$ satisfies
    \[ E\cap B_R=\{ (x,y): x\geq 0, y \ge u(x)\} \cap B_R. \]
    The linear function $l$ is defined as before and we consider as a competitor the set $F \subset H$ such that $F \setminus B_R = E \setminus B_R$ and
    \[ F \cap B_R = \lt\{ (x,y):x \geq 0, y \geq \max(u(x), l(x))\rt\}\cap B_R. \]
    In this case there holds $r=\mathcal{H}^1(\partial H\cap (E\setminus F)) = l(0)= \tan(\alpha-\gamma)\bar x$. The four terms appearing in \eqref{innermin} are estimated as above and altogether we find
    \[ \begin{split}
        \bar x \lt(1- \lt( 1+ \tan^2(\alpha - \gamma)\rt)^{1/2} \rt) + o_{\alpha}(\bar x) \leq \tan(\alpha-\gamma)\beta \bar x + o_{\alpha}(\bar x).
    \end{split}\]
    Dividing by $\bar x$ and sending $\bar x \rightarrow 0$ we obtain $1- \lt( 1+ \tan^2(\alpha - \gamma)\rt)^{1/2} \leq \tan(\alpha-\gamma)\beta$ and another Taylor expansion yields $ o(\alpha - \gamma) \leq (\alpha - \gamma)\beta$. Dividing by $(\alpha - \gamma)$ and sending $\alpha \rightarrow \gamma$, we conclude $0=\cos \gamma \leq \beta$ also in this case. \end{proof}



\end{section}

\begin{section}{Regularity of charged liquid drops and Young's law in the planar case}\label{sec3}

In this section we apply the results of the previous one to minimizers of \eqref{minproblem}. Let $E \subset H$ be a minimizer of \eqref{minproblem}. By Remark \ref{non-empty contact}, $\partial E \cap \partial H\neq \emptyset$. Following the notation introduced at the beginning of Section \ref{sec5}, let $P_1,P_2 \in \partial E \cap \partial H$ be the contact points of $E$ (with possibly $P_1=P_2$) and $\gamma_1, \gamma_2$ be the associated contact angles.

\begin{theorem}\label{younglog1}
    Let $E$ be a minimizer of \eqref{minproblem} with contact angle $\gamma$, then $\partial E \cap \{x_2>0\}$ is $C^{1,1}_{loc}$ regular, $\cH^1(\partial E \cap \partial H)>0$ and $\cos \gamma \leq \beta$. Moreover, for $Q_0>0$ small enough, $Q \leq Q_0$ and $\delta>0$, the $C^{1,1}$ character of $\partial E \cap \{x_2>\delta\}$ depends only on $Q_0$, $\delta$ and $\beta$.
\end{theorem}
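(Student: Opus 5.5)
The plan is to reduce everything to the abstract results of Section \ref{sec5} applied to $J_\beta=\Pb+V$ with $V:=Q^2\cI_2$. By Lemma \ref{volconstraint} and Remark \ref{remminprop}, a minimizer $E$ of \eqref{minproblem} is a $\Lambda$-minimizer of $J_\beta$ under convexity constraint in the sense of Definition \ref{lambdamin}, with $\Lambda$ depending only on $Q_0$ whenever $Q\le Q_0$. By Remark \ref{assumption ok}, $V$ satisfies Assumption \ref{assloclip} with $q=s=2-2/p>1$, where $p>2$ is given by Theorem \ref{convexint}. Since $\partial E\cap\partial H\neq\emptyset$ by Remark \ref{non-empty contact}, Corollary \ref{cor:poslenght} gives $\cH^1(\partial E\cap\partial H)>0$, and Theorem \ref{young1} gives $\cos\gamma\le\beta$. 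Theorem \ref{C1,etareg} then yields $C^{1,\eta}_{loc}$ regularity of $\partial E\cap\{x_2>0\}$ for some $\eta>0$.

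To upgrade to $C^{1,1}_{loc}$, I will bootstrap the exponents in \eqref{ineqloclip} and then invoke Remark \ref{C1,1reg}, which requires $q\ge3$ and $s\ge2$. Once $\partial E$ is $C^{1,\eta}$ near an interior point $x$, the angle of $E$ at $x$ is $\pi$. So, by a local version of Theorem \ref{convexint} and Remark \ref{non-bounded eq}, or by boundary Schauder estimates for the harmonic function $u_E$ on the exterior domain with $C^{1,\eta}$ boundary, the density $f_E=|\nabla u_E|/2\pi$ from Lemma \ref{abscontmeas} is bounded near $x$, and in fact $C^{0,\eta}$.

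I then redo \textit{Step 3} of \cite[Theorem 4.4]{GolNovRuf16} with $f_E\in L^\infty$ locally. Cutting $E$ to $E_{x,\eps}$ changes the energy by at most roughly $\int_{\partial E\cap B_\eps(x)}$ (potential variation) $d\mu_E$. Using the bounded density together with $|E\setminus E_{x,\eps}|\simeq \eps^2\gamma_\eps$, the aim is a bound
\[ V(E_{x,\eps})-V(E)\les \eps^{2}\gamma_\eps+\eps^3, \]
i.e.\ $q=3$ and $s=2$, with the first-order term coming from the potential gradient along the cut. Feeding this into \eqref{cutminimality} gives $\eps\gamma_\eps^2\les \eps^3+\eps^2\gamma_\eps$, hence $\gamma_\eps\les\eps$ uniformly in $x\in\partial E\cap\{x_2>\delta\}$. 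By \cite[Lemma 4.2]{GolNovRuf16} this gives $C^{1,1}$.

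This bootstrap is the main obstacle. It needs a quantitative local $L^\infty$ bound on $f_E$ near interior boundary points, controlled only by the $C^{1,\eta}$ character of $\partial E\cap\{x_2>\delta/2\}$ and a global $L^p$ bound on $f_E$. I would try to get it by comparing $u_E$ locally with a half-plane barrier: it vanishes on the flat part and is controlled by the global potential bounds. I would use the $C^{1,\eta}$ Dini-type regularity to obtain a Lipschitz bound for $u_E$ up to the boundary.

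For the uniformity statement, take $Q\le Q_0$ with $Q_0$ small. Lemma \ref{hausdorffconv} gives Hausdorff closeness of $E$ to $B^\beta$, whose boundary in $\{x_2>0\}$ is smooth, and $\Lambda$ is uniform. Remark \ref{uniform estimates} then gives a uniform $\eps_\delta$ and uniform $C^{1,\eta}$ bounds on $\partial E\cap\{x_2>\delta\}$. The $L^p$ bounds of Theorem \ref{convexint} are uniform along Hausdorff-convergent sequences. Arguing by contradiction with a sequence $Q_k\le Q_0$, all constants in the bootstrap depend only on $Q_0$, $\delta$ and $\beta$, which gives the uniform $C^{1,1}$ character.
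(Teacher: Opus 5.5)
Your proposal is correct and follows essentially the same route as the paper: $\Lambda$-minimality via Lemma \ref{volconstraint}, Assumption \ref{assloclip} via Remark \ref{assumption ok}, then Theorem \ref{C1,etareg}, Corollary \ref{cor:poslenght} and Theorem \ref{young1}, and uniformity via Lemma \ref{hausdorffconv} and Remark \ref{uniform estimates}. For the $C^{1,1}$ upgrade, the paper uses the boundary Schauder option you list (it cites the $C^{1,\eta}$ boundary estimate of Gilbarg--Trudinger, Corollary 8.36, for $u_E$ together with Lemma \ref{abscontmeas}, so that $f_E \in C^{0,\eta}$), rather than a barrier, and then cites Step 5 of the proof of \cite[Theorem 4.4]{GolNovRuf16} for $q=3$, $s=2$; note that simply setting $p=\infty$ in Step 3 would only give $q=s=2$, so the finer first-order analysis you allude to is indeed what is needed.
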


\begin{proof}
Fix $Q_0>0$. By Lemma \ref{volconstraint}, there exists $\Lambda=\Lambda(Q_0)$ such that $E$ is a minimizer of the unconstrained problem \eqref{minproblemfree} for every $Q\le Q_0$. Hence,   it is a $\Lambda$-minimizer of the capillarity shape functional
\[ J_\beta=\Pb + Q^2\cI_2\]
under convexity constraint, see Remark \ref{remminprop}. Moreover, as pointed out in Remark \ref{assumption ok}, $Q^2\cI_2$ satisfies Assumption \ref{assloclip}. We thus obtain $C^{1, \eta}_{loc}$ regularity of $\partial E \cap \{x_2>0\}$ by applying Theorem \ref{C1,etareg}. Let us now improve this to $C^{1,1}_{loc}$ regularity. Writing $\mu_E = f_E \llcorner \partial E$, since $u_E$ solves \eqref{logequilibrium},
by \cite[Corollary 8.36]{gilbarg1977elliptic} we have that $u_E$ is $C^{1,\eta}$ regular up to $\partial E \cap \{x_2 > \delta\}$ with $C^{1,\eta}$ norm depending on the $C^{1,\eta}$ character of $\partial E \cap \{x_2 > \delta\}$. Thanks to Lemma \ref{abscontmeas} we find
\[ f_E = \frac{|\nabla u_E|}{2\pi} \in C^{0,\eta}(\partial E \cap \{x_2 > \delta\}), \]
so that $f_E \in L^{\infty}(\partial E \cap \{x_2>\delta\})$ with $L^{\infty}$ norm depending on the quantities listed above. By \textit{Step 5} of the proof of \cite[Theorem 4.4]{GolNovRuf16} we find that  Assumption \ref{assloclip} holds with $q=3$ and $s=2$ so that $C^{1,1}_{loc}$ regularity follows. Using Remark \ref{uniform estimates}, Lemma \ref{hausdorffconv} and the fact that $B^\beta\cap\{x_2>\delta\}$ is smooth gives the uniformity of the estimates for small charges. Finally, by Theorem \ref{young1} we have  $\cH^1(\partial E \cap \partial H)>0$ and $\cos \gamma \leq \beta$. \end{proof}



\begin{remark}
    Since $\nabla u_E\notin L^\infty (\partial E\cap H)$, in light of the Euler--Lagrange condition \eqref{eulerlagintro} we do not expect that $\kappa_E \in L^{\infty}(\partial E \cap \{x_2>0\})$.
\end{remark}

We now show that if the charge $Q$ is small enough then the contact angle $\gamma$ satisfies Young's law $\cos \gamma = \beta$. We begin with a preparatory result that is a consequence of the minimality of $E$.

\begin{lemma}\label{criticalcharge}
    Let $E$ be a minimizer of \eqref{minproblem} and $F\subset H$ be a convex body such that $E \subset F$. Setting
    \[\delta \text{Vol}:= |F \setminus E| = |F| - 1 \qquad \text{and} \qquad \delta P := \Pb(F) - \Pb(E), \]
    there exist $\bar Q>0$ such that for every $Q \leq \bar Q$ we have $\delta \text{Vol} \lesssim \delta P$ whenever $\delta \text{Vol} \ll 1$.
\end{lemma}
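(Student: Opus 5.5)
Rescale $F$ to unit volume and compare with $E$ using minimality in \eqref{minproblem}. Set $\lambda:=|F|^{-1/2}=(1+\delta\text{Vol})^{-1/2}<1$ and $G:=\lambda F$. After a translation along $\partial H$ (harmless, since $\Pb$ and $\cI_2$ are invariant under it and dilations about a point of $\partial H$ keep $H$ invariant), $G\subset H$ is a convex body with $|G|=1$, so $G$ is admissible and
\[ \Pb(E)+Q^2\cI_2(E)\le \Pb(G)+Q^2\cI_2(G). \]
The terms of $G$ follow from scaling and \eqref{scaling}:
\[ \Pb(G)=\lambda\Pb(F) \qquad\text{and}\qquad \cI_2(G)=\cI_2(F)-\log\lambda=\cI_2(F)+\tfrac12\log(1+\delta\text{Vol}). \]
By monotonicity of $\cI_2$ under inclusion and $E\subset F$, we have $\cI_2(F)\le\cI_2(E)$. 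Substituting,
\[ \Pb(E)\le \lambda\bigl(\Pb(E)+\delta P\bigr)+\tfrac{Q^2}{2}\log(1+\delta\text{Vol}). \]

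Next we expand for $\delta\text{Vol}\ll1$. We have $1-\lambda\ge c\,\delta\text{Vol}$ with a universal $c>0$ (close to $1/2$), and $\log(1+\delta\text{Vol})\le\delta\text{Vol}$. Rearranging gives
\[ (1-\lambda)\Pb(E)\le \lambda\,\delta P+\tfrac{Q^2}{2}\delta\text{Vol}, \]
hence
\[ \bigl(c\,\Pb(E)-\tfrac{Q^2}{2}\bigr)\delta\text{Vol}\le \delta P. \]

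It remains to bound $\Pb(E)$ from below uniformly. By the isoperimetric inequality for $\Pb$ recalled in Subsection \ref{capillarity}, $\Pb(E)\ge\Pb(B^\beta)>0$ since $|E|=1$, and this constant depends only on $\beta$. Choosing $\bar Q$ with $\bar Q^2\le c\,\Pb(B^\beta)$ makes the coefficient at least $\tfrac c2\Pb(B^\beta)\gtrsim1$, which yields $\delta\text{Vol}\lesssim\delta P$.

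The only delicate point is the sign of the logarithmic scaling. The dilation shrinks the set, which increases $\cI_2$ by $\tfrac12\log(1+\delta\text{Vol})$, and this penalty must be absorbed by the perimeter gain $(1-\lambda)\Pb(E)$. That absorption is exactly what forces the smallness of $Q$. Everything else is a direct expansion.
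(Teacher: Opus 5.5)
Your argument is correct and is essentially the paper's proof: rescale $F$ by $\lambda=(1+\delta\text{Vol})^{-1/2}$, test minimality with $\lambda F$, use $\cI_2(F)\le\cI_2(E)$ and the logarithmic scaling, and expand $1-\lambda$. The step from $\lambda\,\delta P$ to $\delta P$ tacitly needs $\delta P\ge 0$, which both you and the paper get for free because the left-hand side is positive.

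The one difference is the lower bound on $\Pb(E)$. You use the isoperimetric inequality $\Pb(E)\ge\Pb(B^\beta)$ directly, while the paper appeals to the convergence $\Pb(E)\to\Pb(B^\beta)$ from \eqref{perconv}. Your version is more elementary and gives an explicit threshold $\bar Q$ depending only on $\beta$.
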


\begin{proof}
    Let $\lambda  < 1$ be such that $|\lambda F|=1$. By Taylor expansion,
    \begin{equation}\label{taylorlam} \lambda = \lt( 1+ \delta \text{Vol} \rt)^{-1/2} = 1 - \frac{1}{2}\delta \text{Vol} + o(\delta \text{Vol}) \qquad \text{as } \delta \text{Vol} \rightarrow 0^+. \end{equation}
    Using $\lambda F$ as a competitor for \eqref{minproblem}, the minimality of $E$ yields
    \[ \begin{split}\Pb(E) + Q^2 \cI_2(E) & \leq \Pb(\lambda F) + Q^2 \cI_2(\lambda F) \\ &= \lambda \Pb(F) + Q^2 \cI_2(F) - Q^2 \log(\lambda).   \end{split} \]
    Using $\cI_2(F) \leq \cI_2 (E)$ and rearranging the terms we find
    \[ (1 - \lambda) \Pb(E) + Q^2 \log\lambda \leq \lambda \delta P \leq \delta P. \] 
    By hypothesis we have $1 - \lambda \simeq \delta \text{Vol} \ll 1$, so by means of the Taylor expansion \eqref{taylorlam} we obtain \[ \delta \text{Vol} \lt( \Pb(E) - Q^2 \rt) \lesssim \delta P. \]
    Recalling the convergence $\Pb(E) \longrightarrow \Pb(B^{\beta})$ as $Q \rightarrow 0$ given by \eqref{perconv},  we choose $Q$ small enough so that $\Pb(E) - Q^2 \gtrsim 1$ and conclude that indeed $\delta \text{Vol} \lesssim \delta P$.
\end{proof}

We now construct new competitors for  \eqref{minproblem}. In the upcoming proofs we will use them to show first the second inequality of Young's law and then an additional property of the minimizers of $E$.

\begin{construction}\label{buildcomp}
    Let $z \in \partial E \cap \{x_2>0\}$ and $e \in \S^1$ such that $z+\R e$ is the tangent line to $\partial E$ in $z$. There exist $R, r>0$ such that the following holds. Setting $K:=[-R, R] \times [-r, r]$, there exists a convex function of class $C^{1,1}$ with $u(0)=u'(0)=0$ for which we have
    \[ \partial E \cap K \cap H = \{z+xe+u(x)e^\perp:x \in \R\} \cap K \cap H. \]
    Thus, $E \cap K$ coincides with the epigraph of $u$ in $K \cap H$. The set $K$ is introduced in this form so that flat subsets of $\partial E \cap \{x_2>0\}$  can be entirely parametrized as a graph (see for example Theorem \ref{strictconv} below). From now on we use $\{e,e^\perp\}$ as a basis of $\R^2$ and $z$ as the origin, we keep denoting by $H$ the half-space appearing in \eqref{minproblem} expressed in these new coordinates. We introduce some additional notation, see Figure \ref{picbuildcomp}.
    \begin{figure}
        \centering
\tikzset{every picture/.style={line width=0.75pt}} 

\begin{tikzpicture}[x=0.75pt,y=0.75pt,yscale=-1,xscale=1]

\draw [color={rgb, 255:red, 128; green, 128; blue, 128 }  ,draw opacity=1 ]   (70.5,95.89) -- (481,96.17) ;
\draw [shift={(483,96.18)}, rotate = 180.04] [color={rgb, 255:red, 128; green, 128; blue, 128 }  ,draw opacity=1 ][line width=0.75]    (10.93,-3.29) .. controls (6.95,-1.4) and (3.31,-0.3) .. (0,0) .. controls (3.31,0.3) and (6.95,1.4) .. (10.93,3.29)   ;
\draw [color={rgb, 255:red, 155; green, 155; blue, 155 }  ,draw opacity=1 ]   (324.14,150.84) -- (324.14,18.34) ;
\draw [shift={(324.14,16.34)}, rotate = 90] [color={rgb, 255:red, 155; green, 155; blue, 155 }  ,draw opacity=1 ][line width=0.75]    (10.93,-3.29) .. controls (6.95,-1.4) and (3.31,-0.3) .. (0,0) .. controls (3.31,0.3) and (6.95,1.4) .. (10.93,3.29)   ;
\draw    (188.8,96.18) -- (257.32,96.18) -- (327.89,96.18) ;
\draw    (327.89,96.18) .. controls (377.32,95.89) and (399.82,90.68) .. (438.68,45.56) ;
\draw    (128,55.09) .. controls (129.5,64.59) and (126.41,95.89) .. (188.8,96.18) ;
\draw [color={rgb, 255:red, 208; green, 2; blue, 27 }  ,draw opacity=1 ]   (104.5,70.59) -- (268.91,139.85) ;
\draw [color={rgb, 255:red, 208; green, 2; blue, 27 }  ,draw opacity=1 ]   (435.95,66.38) -- (268.91,139.85) ;
\draw  [dash pattern={on 0.84pt off 2.51pt}]  (390.95,87.21) -- (390.95,96.47) ;
\draw  [dash pattern={on 0.84pt off 2.51pt}]  (151.41,89.81) -- (151.41,96.75) ;
\draw  [dash pattern={on 0.84pt off 2.51pt}]  (268.91,95.89) -- (268.91,139.85) ;
\draw    (128,55.09) -- (277.5,37.59) ;
\draw [color={rgb, 255:red, 155; green, 155; blue, 155 }  ,draw opacity=1 ]   (79.5,60.59) -- (116.8,56.36) -- (128,55.09) ;

\draw (243.25,63.69) node [anchor=north west][inner sep=0.75pt]    {$E$};
\draw (443.55,55.11) node [anchor=north west][inner sep=0.75pt]  [font=\small]  {$\ell ^{+}$};
\draw (87.5,61.02) node [anchor=north west][inner sep=0.75pt]    {$\ell ^{-}$};
\draw (312.36,80.62) node [anchor=north west][inner sep=0.75pt]  [font=\footnotesize]  {$0$};
\draw (388.91,100.49) node [anchor=north west][inner sep=0.75pt]  [font=\footnotesize]  {$x_{+}$};
\draw (141.86,99.1) node [anchor=north west][inner sep=0.75pt]  [font=\footnotesize]  {$x_{-}$};
\draw (377.73,68.67) node [anchor=north west][inner sep=0.75pt]  [font=\footnotesize]  {$u_{+}$};
\draw (147.5,70.6) node [anchor=north west][inner sep=0.75pt]  [font=\footnotesize]  {$u_{-}$};
\draw (441.59,28.6) node [anchor=north west][inner sep=0.75pt]  [font=\footnotesize]  {$u$};
\draw (265,78.51) node [anchor=north west][inner sep=0.75pt]  [font=\footnotesize]  {$\overline{x}$};
\draw (158.41,98.79) node [anchor=north west][inner sep=0.75pt]  [font=\footnotesize]  {$\overline{x}_{-}$};
\draw (364.91,98.79) node [anchor=north west][inner sep=0.75pt]  [font=\footnotesize]  {$\overline{x}_{+}$};
\draw (286.75,29.19) node [anchor=north west][inner sep=0.75pt]  [font=\small]  {$\partial H$};

\end{tikzpicture}
        \caption{Construction \ref{buildcomp}}
        \label{picbuildcomp}
    \end{figure}
    Let $x_+, x_- \in (-R, R)$ with $x_- < x_+$. We set 
    \[ u_\pm:= u(x_\pm) \qquad \text{and} \qquad u'_\pm:= u'(x_\pm), \]
    so that $(x_+, u_+), (x_-, u_-)\in \text{Graph}(u)$. We call $\ell^+$ and $\ell^-$ the tangent lines to $\partial E$ passing through $(x_+, u_+)$ and $(x_-, u_-)$ respectively. In particular, we have
    \[\ell^\pm:=\{(x, l^\pm(x)):x \in \R \} \qquad \text{with} \qquad l^\pm(x):= u_\pm + u'_\pm(x - x_\pm).\]
    Whenever either $u'_+\neq 0$ or $u'_- \neq 0$, we also set
    \[ \bar x_+:= x_+ - \frac{u_+}{u'_+} \qquad \text{and} \qquad \bar x_-:= x_- - \frac{u_-}{u'_-}= x_- + \frac{u_-}{-u'_-},\]
    so that $(\bar x_+, 0)$ and $(\bar x_-, 0)$ are the intersections between the $x$ axis and the lines $\ell^+$ and $\ell^-$ respectively. If both $u'_+\neq 0$ and $u'_- \neq 0$ then we set $\bar x$ such that $l^+(\bar x)=l^-(\bar x)$. We take as a competitor the set $F \subset H$ such that
    \[ F:=E \cup \lt \{(x,y): x \in [x_-,x_+], \, \max (l^-(x), l^+(x)) \leq y \leq u(x)\rt \}.\]
    By construction $F$ is convex and $E \subset F$.
\end{construction}

We are ready to show the full validity of Young's law $\cos \gamma = \beta$ for the contact angle of our minimizer $E$. To this end, we prove that if the charge is small enough then the contact set $\partial E \cap \partial H$ is the only flat subset of $\partial E$.

\begin{theorem}\label{strictconv}
    Let $E$ be a minimizer of \eqref{minproblem} and let $\bar Q$ be given by Lemma \ref{criticalcharge}, then for every $Q \leq \bar Q$ the set $\partial E \cap \{x_2>0\}$ contains no segments.
\end{theorem}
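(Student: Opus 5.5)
We argue by contradiction and assume that $\partial E\cap\{x_2>0\}$ contains a nontrivial segment $S$. The idea is to use Construction \ref{buildcomp} to cut a corner \emph{outward} near one endpoint of a maximal segment. This gains volume at first order relative to the perimeter it costs. By Lemma \ref{criticalcharge} this is impossible for $Q\le\bar Q$, since exterior perturbations $F\supset E$ must satisfy $\delta\text{Vol}\lesssim\delta P$.

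\textbf{Setup.} Take $S=[a,b]$ maximal. Since $E$ is convex and $\partial E\cap\partial H$ is the contact segment, at least one endpoint, say $b$, lies in $\{x_2>0\}$. By Theorem \ref{younglog1}, $\partial E$ is $C^{1,1}$ near $b$. Choose $z$ at the midpoint of $S$ and use the coordinates of Construction \ref{buildcomp}, with $K$ long enough to contain $S$ and a bit beyond $b$. Then $u\equiv0$ on $[x_a,x_b]$, and $u>0$, $u'>0$ for $x>x_b$, with $u'(x)\to0$ as $x\downarrow x_b$. Near $a$ the situation is similar unless $a\in\partial H$; in that case we only use one side. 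Fix $x_-$ inside the segment, so that $\ell^-$ is the line $\{y=0\}$ and $l^-\equiv0$. Let $x_+>x_b$ with $x_+\downarrow x_b$. The competitor $F$ then adds the region between the graph of $u$ and $\max(0,l^+)$ on $[x_b,x_+]$. Also, $\bar x=\bar x_+=x_+-u_+/u'_+\in(x_b,x_+)$.

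\textbf{Estimates.} Write $h:=x_+-\bar x=u_+/u'_+$ and $\theta:=u'_+$. In the worst case ($u$ linear-like) the added volume is $\delta\text{Vol}\simeq \theta h^2$, and more generally it is the area of the region between $u$ and the tangent lines. The perimeter change is $\delta P=(\bar x-x_b)+h\sqrt{1+\theta^2}-\int_{x_b}^{x_+}\sqrt{1+u'^2}$. By convexity this is bounded by $\theta^2(x_+-x_b)$ up to constants, i.e.\ $\delta P\lesssim \theta\int_{x_b}^{x_+}u'$. The key point is that the volume gained is not small relative to $\delta P$. We have $\delta\text{Vol}\gtrsim\int_{x_b}^{x_+}(l^+\vee 0 \text{ gap})$, which is comparable to $(x_+-x_b)\,u_+ - \int u$. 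A cleaner route is to choose $x_+$ so that the configuration is comparable to a triangle, and compare the ratio $\delta P/\delta\text{Vol}\lesssim \theta/(x_+-x_b)$ (second-order perimeter excess over area). Since $u\in C^{1,1}$, $\theta\lesssim x_+-x_b$, so this ratio is merely bounded, which is not enough.

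We therefore exploit the segment itself. We instead take $x_-$ also outside the segment near $a$, or, if $a\in\partial H$, use the one-sided construction with the part near $b$ and translate-rescale. The decisive observation is that the flat part lets us move the kink: choose $x_+$ close to $x_b$ but $x_-$ at the far end, so that the added region is a thin triangle of base $\simeq |S|$ and height $\simeq u_+ - u'_+(x_+-x_-)\cdot$... Concretely, replace the tangent line $\ell^+$ by the line through $(x_+,u_+)$ and a point of $S$ at distance $|S|/2$. This gives $\delta\text{Vol}\simeq |S|\,u_+$, while the perimeter cost is second order in the slope: $\delta P\lesssim |S|(u_+/|S|)^2+o(u_+)$. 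Hence $\delta\text{Vol}/\delta P\to\infty$ as $x_+\downarrow x_b$, contradicting Lemma \ref{criticalcharge} once $\delta\text{Vol}\ll1$.

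\textbf{Main obstacle.} The main obstacle is verifying that this modified competitor is convex and that its perimeter excess is truly $o(u_+)$. The perimeter part must compare a chord with the arc of $u$ over $[x_b,x_+]$ plus the replaced piece of $S$. I expect to need $u_+=o(x_+-x_b)$, which holds since $u'(x_b)=0$, and a careful choice of the support point on $S$. Convexity fails unless the new line stays below $u$ beyond $x_+$. If it does not, I would fall back to the genuine tangent line $\ell^+$ and choose $x_-$ at the other end of $S$, arguing with the same length-versus-height comparison.
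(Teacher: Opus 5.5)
\textbf{Gap in the key competitor.} The decisive step does not work. The line through $(x_+,u_+)$ and a point of $S$ joins two points of $\partial E$. By convexity, the segment between them lies in $E$, i.e. above the graph of $u$. So there is no region outside $E$ between $u$ and this line. Using it produces a set $F\subset E$ obtained by \emph{cutting off} a triangle of area $\simeq |S|\,u_+$: your $\delta\text{Vol}$ has the wrong sign.

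Lemma \ref{criticalcharge} requires $E\subset F$, so it cannot be invoked. Such a cut also gives no contradiction by other means. Its perimeter saving is at most $\tfrac12\int_{x_b}^{x_+}(u')^2\le u'_+u_+$. This is negligible against the first-order cost $\simeq|S|\,u_+$ of rescaling back to unit volume, and $\cI_2$ only increases under inclusion. The convexity issue you flag beyond $x_+$ is therefore not the real obstacle; the direction of the perturbation is.

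\textbf{The missing construction.} You need to push the whole flat piece \emph{outward} with a tent whose apex lies outside $E$. In the coordinates of Construction \ref{buildcomp}, with the segment equal to $[-L,L]\times\{0\}$, take the apex at $(0,-h)$ and the two tangent lines $\ell^\pm$ from it to the graph of $u$. This is Construction \ref{buildcomp} with $\bar x=(0,-h)$.
\begin{itemize}
\item The free parameter is the depth $h$, not $u_+$.
\item The added set contains the triangle with vertices $(\pm L,0)$ and $(0,-h)$, so $\delta\text{Vol}\ge Lh$.
\item Since $l^+\le u$ and $u(L)=0$, you get $-h+u'_+L\le 0$, i.e. $u'_+\le h/L$; similarly $|u'_-|\le h/L$.
\item Writing $\delta P_+=\int_0^{x_+}\bigl(\sqrt{1+(u'_+)^2}-\sqrt{1+(u')^2}\bigr)\le u'_+\int_0^{x_+}(u'_+-u')\le (u'_+)^2x_+$, and likewise for $\delta P_-$, gives $\delta P\lesssim h^2/L$.
\end{itemize}
Lemma \ref{criticalcharge} then yields $Lh\lesssim h^2/L$, i.e. $L^2\lesssim h$, which is impossible as $h\to0$.

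\textbf{The contact-point case.} Your closing fallback (tangent line $\ell^+$ with $x_-$ at the other end of $S$) points toward this construction, but it is exactly the part that has to be carried out. That includes the case where one endpoint of the segment is a contact point. There you take $\ell^-$ through the contact point itself ($x_-=-L$). You then check that the added triangle stays in $H$ for small $h$, which holds because the angle between the segment and $\partial H$ is less than $\pi$. The contact set is unchanged, so only $P(\cdot,H^\circ)$ varies.

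Your proposal to ``use the one-sided construction and translate-rescale'' in that case is not an argument. You yourself showed that the one-sided construction only gives a bounded ratio $\delta P/\delta\text{Vol}$.
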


\begin{proof}
    Assume by contradiction that there exists a segment $\Gamma \subset \partial E \cap \{x_2>0\}$ and set $2L:=\cH^1(\Gamma)$. Then there exist $z \in \partial E \cap \{x_2>0\}$ with tangent unit vector $e$ such that
    \[ \Gamma = [z-Le, z+Le]. \]
    We distinguish two cases.
    \begin{itemize}
        \item[1.] $\Gamma \cap \partial H = \emptyset$, namely no endpoint of $\Gamma$ is a contact point of $E$ with $\partial H$.
        \item[2.] $\Gamma \cap \partial H \neq \emptyset$, so an endpoint of $\Gamma$ is a contact point $P$ of $E$ with $\partial H$. Up to changing sign to $e$ we assume that $P= z - Le$.
    \end{itemize}
    From now on we use the coordinate system in which $z$ is the origin and $\{e, e^{\perp}\}$ is the orthonormal basis. Adopting the notation introduced in Construction \ref{buildcomp}, we set $r>0$, $R>L$ and we parametrize $\partial E \cap K \cap H$ as the graph of a convex and $C^{1,1}$ function $u: \R \rightarrow \R$ in $K \cap H$. We point out that by hypothesis $u$ is constantly equal to $0$ on $[-L, L]$. For $h \ll r$, we build a competitor for \eqref{minproblem} by applying Construction \ref{buildcomp} with $\bar x =(0, -h)$. Let $(x_+, u_+)$ and $(x_-, u_-)$ be the associated tangency points, see Figure \ref{picstrict} for an example of \textit{Case 1} and \textit{Case 2}.
    \begin{figure}
        \centering
\tikzset{every picture/.style={line width=0.75pt}} 

\begin{tikzpicture}[x=0.75pt,y=0.75pt,yscale=-1,xscale=1]

\draw [color={rgb, 255:red, 128; green, 128; blue, 128 }  ,draw opacity=1 ]   (25.17,96.34) -- (310.4,96.34) ;
\draw [shift={(312.4,96.34)}, rotate = 180] [color={rgb, 255:red, 128; green, 128; blue, 128 }  ,draw opacity=1 ][line width=0.75]    (10.93,-3.29) .. controls (6.95,-1.4) and (3.31,-0.3) .. (0,0) .. controls (3.31,0.3) and (6.95,1.4) .. (10.93,3.29)   ;
\draw [color={rgb, 255:red, 155; green, 155; blue, 155 }  ,draw opacity=1 ]   (168.3,167.09) -- (168.2,20.74) ;
\draw [shift={(168.2,18.74)}, rotate = 89.96] [color={rgb, 255:red, 155; green, 155; blue, 155 }  ,draw opacity=1 ][line width=0.75]    (10.93,-3.29) .. controls (6.95,-1.4) and (3.31,-0.3) .. (0,0) .. controls (3.31,0.3) and (6.95,1.4) .. (10.93,3.29)   ;
\draw    (84.47,96.47) -- (251.47,96.22) ;
\draw    (251.47,96.22) .. controls (276.3,96.34) and (300.3,85.84) .. (309.3,60.09) ;
\draw    (41.8,62.59) .. controls (44.28,74.59) and (50.81,96.28) .. (84.47,96.47) ;
\draw [color={rgb, 255:red, 208; green, 2; blue, 27 }  ,draw opacity=1 ]   (167.8,144.59) -- (284.8,88.84) ;
\draw [color={rgb, 255:red, 208; green, 2; blue, 27 }  ,draw opacity=1 ]   (167.8,144.59) -- (59.3,90.59) ;
\draw  [dash pattern={on 0.84pt off 2.51pt}]  (284.8,88.84) -- (284.8,96.84) ;
\draw  [dash pattern={on 0.84pt off 2.51pt}]  (59.3,90.59) -- (59.3,98.59) ;
\draw [color={rgb, 255:red, 128; green, 128; blue, 128 }  ,draw opacity=1 ]   (343.7,96.34) -- (628.93,96.34) ;
\draw [shift={(630.93,96.34)}, rotate = 180] [color={rgb, 255:red, 128; green, 128; blue, 128 }  ,draw opacity=1 ][line width=0.75]    (10.93,-3.29) .. controls (6.95,-1.4) and (3.31,-0.3) .. (0,0) .. controls (3.31,0.3) and (6.95,1.4) .. (10.93,3.29)   ;
\draw [color={rgb, 255:red, 155; green, 155; blue, 155 }  ,draw opacity=1 ]   (486.83,167.09) -- (486.73,20.74) ;
\draw [shift={(486.73,18.74)}, rotate = 89.96] [color={rgb, 255:red, 155; green, 155; blue, 155 }  ,draw opacity=1 ][line width=0.75]    (10.93,-3.29) .. controls (6.95,-1.4) and (3.31,-0.3) .. (0,0) .. controls (3.31,0.3) and (6.95,1.4) .. (10.93,3.29)   ;
\draw    (403,96.47) -- (570,96.22) ;
\draw    (570,96.22) .. controls (594.83,96.34) and (618.83,85.84) .. (627.83,60.09) ;
\draw [color={rgb, 255:red, 208; green, 2; blue, 27 }  ,draw opacity=1 ]   (486.33,144.59) -- (603.33,88.84) ;
\draw [color={rgb, 255:red, 208; green, 2; blue, 27 }  ,draw opacity=1 ]   (486.33,144.59) -- (403,96.47) ;
\draw  [dash pattern={on 0.84pt off 2.51pt}]  (603.33,88.84) -- (603.33,96.84) ;
\draw    (403,96.47) -- (415,40.01) ;
\draw  [dash pattern={on 4.5pt off 4.5pt}]  (403,96.47) -- (394.33,137.26) ;
\draw  [draw opacity=0][dash pattern={on 4.5pt off 4.5pt}] (410.2,72.38) .. controls (421.12,75.54) and (428.85,85.6) .. (428.56,97.14) -- (403,96.47) -- cycle ; \draw  [dash pattern={on 4.5pt off 4.5pt}] (410.2,72.38) .. controls (421.12,75.54) and (428.85,85.6) .. (428.56,97.14) ;  

\draw (139.55,54.74) node [anchor=north west][inner sep=0.75pt]    {$E$};
\draw (304.7,41.34) node [anchor=north west][inner sep=0.75pt]    {$u$};
\draw (247.23,84.71) node [anchor=north west][inner sep=0.75pt]  [font=\footnotesize]   {$L$};
\draw (75.23,84.21) node [anchor=north west][inner sep=0.75pt]  [font=\footnotesize]   {$-L$};
\draw (53.8,98.91) node [anchor=north west][inner sep=0.75pt]  [font=\footnotesize]   {$x_{-}$};
\draw (279.87,99.24) node [anchor=north west][inner sep=0.75pt]  [font=\footnotesize]   {$x_{+}$};
\draw (170.13,109.58) node [anchor=north west][inner sep=0.75pt]  [font=\footnotesize]   {$h$};
\draw (222.47,118.88) node [anchor=north west][inner sep=0.75pt]  [font=\small]  {$\ell^{+}$};
\draw (103.13,119.21) node [anchor=north west][inner sep=0.75pt]  [font=\small]  {$\ell^{-}$};
\draw (458.08,54.74) node [anchor=north west][inner sep=0.75pt]    {$E$};
\draw (623.23,41.34) node [anchor=north west][inner sep=0.75pt]    {$u$};
\draw (565.77,84.71) node [anchor=north west][inner sep=0.75pt]  [font=\footnotesize]   {$L$};
\draw (382.77,85.21) node [anchor=north west][inner sep=0.75pt]  [font=\footnotesize]   {$-L$};
\draw (598.4,99.24) node [anchor=north west][inner sep=0.75pt]  [font=\footnotesize]   {$x_{+}$};
\draw (488.67,109.91) node [anchor=north west][inner sep=0.75pt]  [font=\footnotesize]   {$h$};
\draw (541,118.88) node [anchor=north west][inner sep=0.75pt]  [font=\small]  {$\ell^{+}$};
\draw (433.33,119.88) node [anchor=north west][inner sep=0.75pt]  [font=\small]  {$\ell^{-}$};
\draw (393.67,34.4) node [anchor=north west][inner sep=0.75pt]  [font=\footnotesize]   {$\partial H$};
\draw (409.4,80.6) node [anchor=north west][inner sep=0.75pt]  [font=\footnotesize]   {$\gamma $};

    \end{tikzpicture}
    \caption{\textit{Cases 1} and \textit{Case 2} with contact angle $\gamma < \pi/2$.}
    \label{picstrict}
    \end{figure}
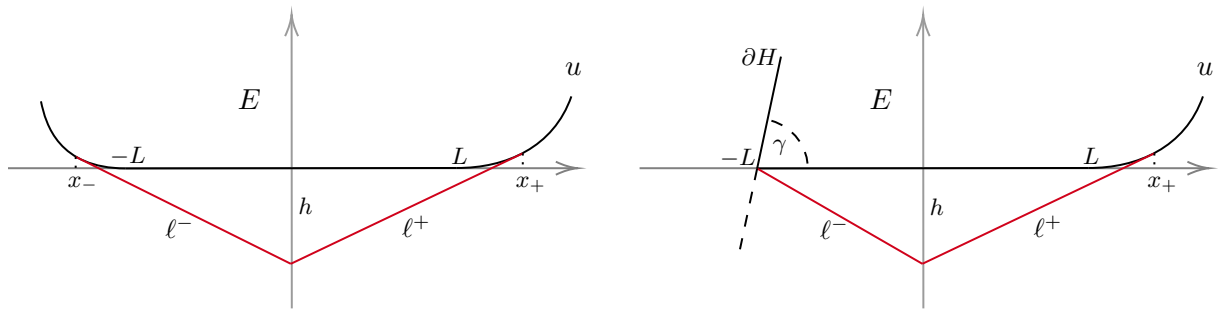
     If $h$ is small enough then $|x_+ - L|\ll 1$ and $|x_-+L| \ll1$. Moreover, in \textit{Case 2} we have $x_-=-L$ for $h$ small enough.  Since $Q \leq \bar Q$, by Lemma \ref{criticalcharge} we have $\delta \text{Vol} \lesssim \delta P$ whenever $h$ is small enough. By construction the triangle with vertices $(L,0)$, $(-L,0)$ and $(0,-h)$ is a subset of $F \setminus E$, thus we have
    \begin{equation}\label{estdiffvol}
        \delta \text{Vol} \geq Lh.
    \end{equation}
    We now estimate $\delta P$ from above, writing $\delta P = \delta P_+ + \delta P_-$ with
    \[ \begin{split}
        \delta P_+ & := \int_0^{x_+} \lt( 1+ (u'_+)^2 \rt)^{\frac{1}{2}} - \int_0^{x_+} \lt( 1+ (u')^2 \rt)^{\frac{1}{2}} = \int_0^{x_+} \frac{(u'_+ + u')(u'_+ - u')}{\lt( 1+ (u'_+)^2 \rt)^{1/2} + \lt( 1+ (u'_+)^2 \rt)^{1/2}} \\ & \leq u'_+ \int_0^{x_+} (u'_+ - u' ) = u'_+ \int_0^{x_+} \int_{x}^{x_+} u''= \int_0^{x_+} tu''(t)dt,
    \end{split} \]
    where we used Fubini in the last equality. The term $\delta P_-$ can be treated in a similar way. Altogether, we find
    \[ \delta P \leq u'_+\int_0^{x_+} tu''(t)dt + |u'_-|\int_{x_-}^0 |t|u''(t)dt. \]
    Since $l^+(L)= l^+(0)+ u'_+L \le 0$ we have that $u'_+ \leq h/L$, in a similar way we get $|u'_-|\leq h/L$. Moreover, $x_+ \lesssim L$ and $|x_-| \lesssim L$ for $h$ small enough. Thus we obtain
    \begin{equation}\label{estdiffper}
        \delta P \leq (u'_+)^2 x_+ + (u'_-)^2|x_-| \lesssim \frac{h^2}{L}.
    \end{equation}
    Plugging \eqref{estdiffvol} and \eqref{estdiffper} in the relation $\delta \text{Vol} \lesssim \delta P$ we find $L^2 \lesssim h$, which gives a contradiction for $h$ small enough.
\end{proof}

\begin{remark}
    As a consequence of Theorem \ref{younglog1} and Theorem \ref{strictconv}, set $\partial E \cap \{x_2>0\}$ can be locally represented by graphs of strictly convex functions of class $C^{1,1}$. Therefore, the set $\{x \in \partial E \cap H:\kappa_E(x)=0\}$ has empty interior in the relative topology of $\partial E \cap \{x_2>0\}$.
\end{remark}

\begin{proof}[Proof of Theorem \ref{thm:Youngintro}]
   By Theorem \ref{younglog1}, we just need  to show that $\cos \gamma \geq \beta$. Let $\bar Q$ be given by Lemma \ref{criticalcharge}. In turn this follows combining  Theorem \ref{strictconv} and Theorem \ref{young2}.
\end{proof}

We conclude the paper with the following result which follows arguing similarly to Theorem \ref{strictconv} .

\begin{lemma}\label{positivecurvature}
    Let $E$ be a minimizer of \eqref{minproblem} and let $\bar Q$ be given by Lemma \ref{criticalcharge}. There exists $\eps_0>0$ such that, for every $Q \leq \bar Q$, if $z \in \partial E \cap \{x_2>0\}$ is a Lebesgue point of $\kappa_E$ with $\kappa_E(z) >0$ then we have $\kappa_E(z) \geq \eps_0$.
\end{lemma}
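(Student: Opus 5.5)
We argue as in Theorem \ref{strictconv}, using Construction \ref{buildcomp} at the point $z$ and Lemma \ref{criticalcharge}, but now replacing the flat segment by the quadratic behaviour of $u$ near $0$ dictated by $\kappa_E(z)$. Put $z$ at the origin with tangent $e$ and write $\partial E$ locally as the graph of a convex $C^{1,1}$ function $u$ with $u(0)=u'(0)=0$ (Theorem \ref{younglog1}), so that $\kappa_E=u''/(1+(u')^2)^{3/2}$. Set $\kappa:=\kappa_E(z)>0$. Because $z$ is a Lebesgue point of $u''$ with value $\kappa$, on sufficiently small scales $u(x)\approx \kappa x^2/2$ and $u'(x)\approx \kappa x$, with errors $o(x^2)$ and $o(|x|)$ respectively. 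Apply Construction \ref{buildcomp} with the vertex placed at $(0,-h)$, $h\ll 1$, and let $(x_\pm,u_\pm)$ be the two tangency points. Solving $u_\pm-u'_\pm x_\pm=-h$ with the quadratic approximation gives $\kappa x_\pm^2/2\approx h$, hence $|x_\pm|\simeq \sqrt{h/\kappa}$ and $|u'_\pm|\simeq \sqrt{\kappa h}$.

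Next estimate the two variations. The added region is the curvilinear region between the two tangent lines and the graph of $u$, so $\delta\text{Vol}\simeq h\,|x_\pm|\simeq h^{3/2}\kappa^{-1/2}$; in particular $\delta\text{Vol}\ll 1$. For the perimeter, the same computation as in Theorem \ref{strictconv} gives $\delta P\le u'_+\int_0^{x_+}tu''+|u'_-|\int_{x_-}^0|t|u''$. Using $u''\approx\kappa$ in an averaged sense on $[x_-,x_+]$, this yields $\delta P\lesssim \sqrt{\kappa h}\,\kappa\,(h/\kappa)=\kappa^{1/2}h^{3/2}$.

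Lemma \ref{criticalcharge}, whose implicit constant $C$ depends only on $\beta$ once $Q\le\bar Q$, then gives $h^{3/2}\kappa^{-1/2}\lesssim \kappa^{1/2}h^{3/2}$, that is $\kappa\ge c$ for a universal $c>0$. Taking $\eps_0:=c$ proves the statement.

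The main obstacle is making these approximations rigorous at a Lebesgue point, since $u''$ is only $L^\infty$. Lebesgue point information gives $\frac1{2\rho}\int_{-\rho}^{\rho}|u''-\kappa|\to0$. I would use it to obtain $u'(x)=\kappa x+o(|x|)$ and $u(x)=\kappa x^2/2+o(x^2)$, which control $x_\pm$, $u'_\pm$ and $\delta\text{Vol}$. The integral $\int_0^{x_+}t u''\le x_+\int_0^{x_+}u''=x_+u'_+$ then avoids any pointwise bound on $u''$. A careful choice of $h$ small (depending on $z$) is needed so that the $o(\cdot)$ terms are absorbed; the final constant must not depend on $h$ or $z$, which holds because both sides scale as $h^{3/2}$. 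Uniformity in $Q\le\bar Q$ follows because the constant in Lemma \ref{criticalcharge} is uniform there.
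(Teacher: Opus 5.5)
Your proposal is correct and follows essentially the paper's argument. Both use the exterior tangent-line competitor of Construction \ref{buildcomp}, the perimeter bound from Theorem \ref{strictconv}, Lemma \ref{criticalcharge}, and the Lebesgue-point expansions $u=\kappa x^2/2+o(x^2)$, $u'=\kappa x+o(x)$, which give $\kappa x^3\lesssim\delta\text{Vol}\lesssim\delta P\lesssim\kappa^2x^3$. The only difference is the parametrization: you fix the vertex at $(0,-h)$ and locate the tangency points $x_\pm\approx\pm\sqrt{2h/\kappa}$, so the paper's Fubini identity becomes exactly $\delta\text{Vol}=\frac12\int_{x_-}^{x_+}t^2u''$, whereas the paper fixes symmetric tangency points $x_\pm=\pm x$ and must instead show that the vertex abscissa satisfies $\bar x/x\to0$.
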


\begin{proof}
    We refer to the notation introduced in Construction \ref{buildcomp}. There exists $r>0$ small enough such that in a suitable system of coordinates where $z$ is the origin, we can parametrize $\partial E\cap B_r(z)$ as the graph of a strictly convex and $C^{1,1}$ function $u: \R \rightarrow \R$ for which $u(0)=u'(0)=0$. By hypothesis, $0$ is a Lebesgue point of $u''$. In particular
    \begin{equation}\label{secondderivative}
    a:=u''(0)=\lim_{x \rightarrow 0} \frac{1}{2x} \int_{-x}^x u''>0. \end{equation}
    Since $u(0)=u'(0)=0$, we have the relations
    \begin{equation}\label{tayrloru''}
        u(x)= \frac{a}{2}x^2+o(x^2) \qquad \text{and} \qquad u'(x)= ax + o(x) \quad \text{as } x \rightarrow 0.
    \end{equation} 
    Let $0<x \ll 1$, we  apply Construction \ref{buildcomp} with $x_+=x$ and $x_-=-x$. In particular, the point $\bar x$ such that $l^+(\bar x)= l^-(\bar x)$ satisfies
    \[ \bar x = x\frac{u'_+ + u'_-}{u'_+ - u'_-}- \frac{u_+ - u_-}{u'_+ - u'_-}.\]
    Thanks to \eqref{tayrloru''} and the fact that $a>0$ we have $u'_+-u'_-=O(x)$, $u'_++u'_-=o(x)$ and $u_+-u_-=o(x^2)$, thus 
    \begin{equation}\label{keyvanish} \lim_{x \rightarrow 0} \frac{\bar x}{x}=0.\end{equation}
     Since $Q \leq \bar Q$, by Lemma \ref{criticalcharge} we have $\delta \text{Vol} \lesssim \delta P$ whenever $x$ is small enough. We estimate $\delta P$ from above by arguing as in the proof of Theorem \ref{strictconv}. We write $\delta P = \delta P_+ + \delta P_-$ with
     \[ \delta P_+:=\int_{\bar x}^{x} \lt( 1+ (u'_+)^2 \rt)^{\frac{1}{2}} - \int_{\bar x}^{x}  \lt( 1+ (u')^2 \rt)^{\frac{1}{2}} \leq u'_+ \int_{\bar x}^{x}  (t - \bar x)u''(t)\,dt. \]
     The term $\delta P_-$ is defined and estimated in a similar way and altogether we find
     \[ \delta P \leq u'_+ \int_{\bar x}^{x}  |t - \bar x|u''(t)\,dt + (-u'_-) \int_{-x}^{\bar x} |t - \bar x|u''(t)\, dt \lesssim \lt( \int_{-x}^x u'' \rt)^2 x. \]
     We now compute $\delta \text{Vol}$. As before, we write $\delta \text{Vol} = \delta \text{Vol}_+ + \delta \text{Vol}_-$ with 
     \[ \begin{split}
         \delta \text{Vol}_+ & := \int_{\bar x}^x (u - l^+) = \int_{\bar x}^x ( u(t)-u_+ - u'_+(t - x))\,dt = \int_{\bar x}^x \int_{x}^{t} (u'(y) - u'_+) \,dy\,dt \\ & = \int_{\bar x}^x \int_t^x \int_y^x u''(s)\,ds\,dy\,dt = \frac{1}{2} \int_{\bar x}^x (s-\bar x)^2 u''(s)\,ds, 
     \end{split}\]
    where we used Fubini to get the last equality. A similar estimate holds for $\delta \text{Vol}_- = \delta \text{Vol} - \delta \text{Vol}_+$. Altogether, the relation $\delta \text{Vol} \lesssim \delta P$ yields
    \begin{equation}\label{poscurv1} \int_{-x}^x(s - \bar x)^2 u''(s)\,ds \lesssim \lt( \int_{-x}^x u'' \rt)^2 x. \end{equation}
    By \eqref{keyvanish} let $x$ be small enough such that $\bar x \in (-x/4, x/4)$, then $(s - \bar x)^2 \gtrsim x^2$ for all $s \in (x/2, x)$ and we have
    \begin{equation}\label{poscurv2} 
    \int_{-x}^x(s - \bar x)^2 u''(s)\,ds \gtrsim x^2 \lt( \int_{-x}^{-x/2} u'' + \int_{x/2}^x u'' \rt) \gtrsim x^2 \int_{-x}^x u''. \end{equation}
    The last equality is another consequence of the fact that $a>0$, indeed under this condition there holds
    \[ \lim_{x \rightarrow 0} \bigg( \int_{-x/2}^{x/2}u''\bigg) \lt( \int_{-x}^{x}u'' \rt)^{-1}= \lim_{x \rightarrow 0} \bigg( \frac{1}{x} \int_{-x/2}^{x/2}u''\bigg) \lt( \frac{1}{x}\int_{-x}^{x}u'' \rt)^{-1} = \frac{1}{2}, \]
    whence for $x$ small enough
    \[ \int_{x/2}^x u'' \gtrsim \int_0^x u'' \qquad \text{and} \qquad \int_{-x}^{-x/2} u'' \gtrsim \int_{-x}^0 u''.\]
    Thanks to \eqref{poscurv1} and \eqref{poscurv2} we obtain
    \[ 1 \lesssim \frac{1}{x} \int_{-x}^x u''. \]
    Taking the limit as $x \rightarrow 0$ we conclude that as claimed, $a=u''(0)\gtrsim 1$. \end{proof}

\end{section}

{\bf Acknowledgements.} This work was supported by Fondation Mathématique Jacques Hadamard, by the project G24-202 funded by Università Italo Francese, by the ANR Stoiques and by Next Generation EU, PRIN 2022E9CF89 and PRIN PNRR P2022WJW9H.

\bibliographystyle{acm}
\bibliography{main}

\begin{thebibliography}{10}

\bibitem{bellettini2002total}
{\sc Bellettini, G., Caselles, V., and Novaga, M.}
\newblock The total variation flow in {$\mathbb{R}^n$}.
\newblock {\em Journal of Differential Equations 184}, 2 (2002), 475--525.

\bibitem{carazzato2025quantitative}
{\sc Carazzato, D., Pascale, G., and Pozzetta, M.}
\newblock Quantitative isoperimetric inequalities in capillarity problems and cones in strong and barycentric forms.
\newblock {\em arXiv preprint arXiv:2507.07686\/} (2025).

\bibitem{caselles2007uniqueness}
{\sc Caselles, V., Chambolle, A., and Novaga, M.}
\newblock Uniqueness of the {C}heeger set of a convex body.
\newblock {\em Pacific Journal of Mathematics 232}, 1 (2007), 77--90.

\bibitem{chodosh2024improved}
{\sc Chodosh, O., Edelen, N., and Li, C.}
\newblock Improved regularity for minimizing capillary hypersurfaces.
\newblock {\em arXiv preprint arXiv:2401.08028\/} (2024).

\bibitem{de2025regularity}
{\sc De~Masi, L., Edelen, N., Gasparetto, C., and Li, C.}
\newblock Regularity of minimal surfaces with capillary boundary conditions.
\newblock {\em Communications on Pure and Applied Mathematics 78}, 12 (2025), 2436--2502.

\bibitem{DePHirVes19}
{\sc De~Philippis, G., Hirsch, J., and Vescovo, G.}
\newblock Regularity of minimizers for a model of charged droplets.
\newblock {\em Comm. Math. Phys.\/} (2019), 1--46.

\bibitem{dephilippis2015regularity}
{\sc De~Philippis, G., and Maggi, F.}
\newblock Regularity of free boundaries in anisotropic capillarity problems and the validity of young’s law.
\newblock {\em Archive for Rational Mechanics and Analysis 216}, 2 (2015), 473--568.

\bibitem{de2025rigidity}
{\sc De~Rosa, A., Neumayer, R., and Resende, R.}
\newblock Rigidity of critical points of hydrophobic capillary functionals.
\newblock {\em arXiv preprint arXiv:2509.22532\/} (2025).

\bibitem{esposito2011remark}
{\sc Esposito, L., and Fusco, N.}
\newblock A remark on a free interface problem with volume constraint.
\newblock {\em J. Convex Anal 18}, 2 (2011), 417--426.

\bibitem{esposito2005quantitative}
{\sc Esposito, L., Fusco, N., and Trombetti, C.}
\newblock A quantitative version of the isoperimetric inequality: the anisotropic case.
\newblock {\em Annali della Scuola Normale Superiore di Pisa-Classe di Scienze 4}, 4 (2005), 619--651.

\bibitem{finn1986equilibrium}
{\sc Finn, R.}
\newblock {\em Equilibrium Capillary Surfaces}, vol.~Die Grundlehren der mathematischen Wissenschaften, Vol. 284.
\newblock Springer-Verlag New York Inc., 1986.

\bibitem{fontelos2009variational}
{\sc Fontelos, M.~A., and Kindel{\'a}n, U.}
\newblock A variational approach to contact angle saturation and contact line instability in static electrowetting.
\newblock {\em The Quarterly Journal of Mechanics \& Applied Mathematics 62}, 4 (2009), 465--480.

\bibitem{fusco2025isoperimetric}
{\sc Fusco, N., Julin, V., Morini, M., and Pratelli, A.}
\newblock The isoperimetric inequality for the capillary energy outside convex sets.
\newblock {\em arXiv preprint arXiv:2509.10200\/} (2025).

\bibitem{garnett2005harmonic}
{\sc Garnett, J.~B., and Marshall, D.~E.}
\newblock {\em Harmonic measure}.
\newblock No.~2. Cambridge University Press, 2005.

\bibitem{gilbarg1977elliptic}
{\sc Gilbarg, D., and Trudinger, N.~S.}
\newblock {\em Elliptic partial differential equations of second order}, vol.~224.
\newblock Springer, 1977.

\bibitem{goldman2012volume}
{\sc Goldman, M., and Novaga, M.}
\newblock Volume-constrained minimizers for the prescribed curvature problem in periodic media.
\newblock {\em Calculus of Variations and Partial Differential Equations 44\/} (2012), 297--318.

\bibitem{GolNovRuf13}
{\sc Goldman, M., Novaga, M., and Ruffini, B.}
\newblock Existence and stability for a non-local isoperimetric model of charged liquid drops.
\newblock {\em Arch. Rat. Mech. Anal. 217}, 1 (2015), 1--36.

\bibitem{GolNovRuf16}
{\sc Goldman, M., Novaga, M., and Ruffini, B.}
\newblock On minimizers of an isoperimetric problem with long-range interactions under a convexity constraint.
\newblock {\em Anal. PDE 11}, 5 (2018), 1113–1142.

\bibitem{goldman2024charged}
{\sc Goldman, M., Novaga, M., and Ruffini, B.}
\newblock A charged liquid drop model with {W}illmore energy.
\newblock {\em arXiv preprint arXiv:2409.01045\/} (2024).

\bibitem{Gol22}
{\sc Goldman, M., Novaga, M., and Ruffini, B.}
\newblock Rigidity of the ball for an isoperimetric problem with strong capacitary repulsion.
\newblock {\em Journal of the European Mathematical Society\/} (2024).

\bibitem{kreutz2024note}
{\sc Kreutz, L., and Schmidt, B.}
\newblock A note on the {W}interbottom shape.
\newblock {\em Proceedings of the Royal Society of Edinburgh Section A: Mathematics\/} (2024), 1--14.

\bibitem{lamboley2023regularity}
{\sc Lamboley, J., and Prunier, R.}
\newblock Regularity in shape optimization under convexity constraint.
\newblock {\em Calculus of Variations and Partial Differential Equations 62}, 3 (2023), 101.

\bibitem{doohovskoy_foundations_1972}
{\sc Landkof, N.}
\newblock {\em Foundations of {Modern} {Potential} {Theory}}.
\newblock Grundlehren der mathematischen {Wissenschaften}. Springer Berlin Heidelberg, 1972.

\bibitem{lieb2001analysis}
{\sc Lieb, E.~H., and Loss, M.}
\newblock {\em Analysis}, vol.~14.
\newblock American Mathematical Soc., 2001.

\bibitem{lippmann1875relations}
{\sc Lippmann, G.}
\newblock Relations entre les ph{\'e}nom{\`e}nes {\'e}lectriques et capillaires.
\newblock {\em Ann. Chim. Phys 5}, 11 (1875), 494--549.

\bibitem{maggi2012sets}
{\sc Maggi, F.}
\newblock {\em Sets of finite perimeter and geometric variational problems: an introduction to Geometric Measure Theory}.
\newblock No.~135. Cambridge University Press, 2012.

\bibitem{mugele2005electrowetting}
{\sc Mugele, F., and Baret, J.-C.}
\newblock Electrowetting: from basics to applications.
\newblock {\em Journal of physics: condensed matter 17}, 28 (2005), R705.

\bibitem{mugele2019electrowetting}
{\sc Mugele, F., and Heikenfeld, J.}
\newblock {\em Electrowetting: fundamental principles and practical applications}.
\newblock John Wiley \& Sons, 2019.

\bibitem{muratov2016well}
{\sc Muratov, C.~B., and Novaga, M.}
\newblock On well-posedness of variational models of charged drops.
\newblock {\em Proceedings of the Royal Society A: Mathematical, Physical and Engineering Sciences 472}, 2187 (2016), 20150808.

\bibitem{muratov2018equilibrium}
{\sc Muratov, C.~B., Novaga, M., and Ruffini, B.}
\newblock On equilibrium shape of charged flat drops.
\newblock {\em Communications on Pure and Applied Mathematics 71}, 6 (2018), 1049--1073.

\bibitem{pacati2025some}
{\sc Pacati, A., Tortone, G., and Velichkov, B.}
\newblock Some remarks on singular capillary cones with free boundary.
\newblock {\em arXiv preprint arXiv:2502.07697\/} (2025).

\bibitem{pascale2025existence}
{\sc Pascale, G.}
\newblock Existence and nonexistence of minimizers for classical capillarity problems in presence of nonlocal repulsion and gravity.
\newblock {\em Nonlinear Analysis 251\/} (2025), 113685.

\bibitem{pascale2024quantitative}
{\sc Pascale, G., and Pozzetta, M.}
\newblock Quantitative isoperimetric inequalities for classical capillarity problems.
\newblock {\em Calculus of Variations and Partial Differential Equations 63}, 9 (2024), 225.

\bibitem{prats2023notes}
{\sc Prats, M., and Tolsa, X.}
\newblock Notes on harmonic measure, 2025.

\bibitem{prunier2024fuglede}
{\sc Prunier, R.}
\newblock Fuglede-type arguments for isoperimetric problems and applications to stability among convex shapes.
\newblock {\em SIAM Journal on Mathematical Analysis 56}, 2 (2024), 1560--1603.

\bibitem{saff2013logarithmic}
{\sc Saff, E.~B., and Totik, V.}
\newblock {\em Logarithmic potentials with external fields}, vol.~316.
\newblock Springer Science \& Business Media, 2013.

\bibitem{scheid2009proof}
{\sc Scheid, C., and Witomski, P.}
\newblock A proof of the invariance of the contact angle in electrowetting.
\newblock {\em Mathematical and computer modelling 49}, 3-4 (2009), 647--665.

\bibitem{schneider2013convex}
{\sc Schneider, R.}
\newblock {\em Convex bodies: the Brunn--Minkowski theory}, vol.~151.
\newblock Cambridge university press, 2013.

\bibitem{valdinoci2025toward}
{\sc Valdinoci, E.}
\newblock Toward a long-range theory of capillarity.
\newblock {\em Notices of the American Mathematical Society 72}, 2 (2025).

\bibitem{young1805iii}
{\sc Young, T.}
\newblock An essay on the cohesion of fluids.
\newblock {\em Philosophical transactions of the royal society of London}, 95 (1805), 65--87.

\end{thebibliography}

\end{document}